\newtheorem{theorem}{Theorem}[section]
\newtheorem{lemma}[theorem]{Lemma}
\newtheorem{proposition}[theorem]{Proposition}
\newtheorem{corollary}[theorem]{Corollary}
\newtheorem{definition}[theorem]{Definition}
\newtheorem{example}[theorem]{Example}
\theoremstyle{remark}
\newtheorem{remark}[theorem]{Remark}
\newtheorem{question}[theorem]{Question}
\renewcommand{\epsilon}{\varepsilon}
\DeclareMathOperator{\EqP}{EqP}
\DeclareMathOperator{\EvP}{EvP}
\DeclareMathOperator{\Eq}{Eq}
\DeclareMathOperator{\Ev}{Ev}
\DeclareMathOperator{\Orb}{Orb}
\title[Equicontinuity, Transitivity and Sensitivity]{Equicontinuity, Transitivity and Sensitivity: the Auslander-Yorke Dichotomy Revisited}
\author{Chris Good, Robert Leek and Joel Mitchell}
\date{March 2018}
\begin{document}

\hypersetup{pageanchor=false} 

\begin{abstract}
We discuss topological equicontinuity and even continuity in dynamical systems. In doing so we provide a classification of topologically transitive dynamical systems in terms of equicontinuity pairs, give a generalisation of the Auslander-Yorke Dichotomy for minimal systems and show there exists a transitive system with an even continuity pair but no equicontinuity point. We define what it means for a system to be eventually sensitive; we give a dichotomy for transitive dynamical systems in relation to eventual sensitivity. Along the way we define a property called splitting and discuss its relation to some existing notions of chaos.
\end{abstract}

\maketitle

\hypersetup{pageanchor=true} 

Let $(X,f)$ be a discrete dynamical system, so that $f\colon X \to X$ is a (continuous) map on the metric space $X$. The dynamical system is \textit{equicontinuous at a point} $x \in X$ if for any $\epsilon>0$ there is a $\delta>0$ such that the $\delta$-ball around $x$ does not expand to more than diameter $\epsilon$ under iteration of $f$. The system itself is said to be \textit{equicontinuous} if it is equicontinuous at every point. 
Compactness of the space $X$ ensures that equicontinuity is equivalent to uniform equicontinuity: for any $\epsilon>0$ there is a $\delta>0$ such that no $\delta$-ball expands to more than diameter $\epsilon$ under iteration of $f$. Equicontinuity is extremely important in mathematical analysis where it provides the primary condition in the Arzel\`a–Ascoli theorem (see \cite[Theorem 8.2.10]{Engelking}). 
A related concept to equicontinuity is that of sensitivity. 
The system $(X,f)$ is \textit{sensitive} if every nonempty open set expands to at least diameter $\delta$ under iteration of $f$. It is obvious that the properties of sensitivity and equicontinuity are mutually exclusive. Examining the quantifiers one sees that sensitivity is \textit{almost} a negation of equicontinuity. Indeed, negating the property of equicontinuity at a given point gives a localised version of sensitivity. Auslander and Yorke \cite{AuslanderYorke} specify a type of system for which sensitivity is precisely the negation of equicontinuity: a dynamical system $(X,f)$ is said to be \textit{minimal} if the forward orbit of every point is dense in the space. 
The Auslander-Yorke dichotomy states that a compact metric minimal system is either equicontinuous or sensitive. Various analogues of this theorem have since been offered \cite{HuangKolyadaZhang}.


Topological transitivity, or simply transitivity, is a weakening of minimality. The system $(X,f)$ is said \textit{transitive} if for any nonempty open sets $U$ and $V$ there is an $n \in \mathbb{N}$ such that $f^n(U) \cap V \neq \emptyset$. Under certain conditions (compact metric being sufficient) this is equivalent to the existence of a \textit{transitive point} (i.e.~ a point with a dense orbit) \cite{AkinCarlson}. 
Transitivity and sensitivity are often cited as two key ingredients for a system to be chaotic (see, for example \cite{AuslanderYorke, Devaney}). The former prevents the system from being decomposed into multiple invariant open sets (and thereby studied as a collection of subsystems). The latter brings an element of unpredictability to the system; a small error in initial conditions may be exacerbated over time. This is clearly of particular importance in an applied setting where there is almost always going to be an error in one's measurements. In his definition of chaos, along with these two properties, Robert Devaney \cite[p.~ 50]{Devaney} included the condition that the set of periodic points be dense in whole space, thus providing ``an element of regularity'' in the midst of seemingly random behaviour. Perhaps surprisingly this regularity condition together with transitivity proved sufficient in a compact space to entail sensitivity \cite{BanksBrooksCairnsDavisStacey, GlasnerWeiss}. Since then, the gap between transitivity and sensitivity has been researched extensively (see, for example, \cite{AkinAuslanderBerg,GoodMacias,LiTuYe,Moothathu}); Akin \textit{et al} \cite{AkinAuslanderBerg} gave the following dichotomy: a compact metric transitive system is either sensitive or contains a point of equicontinuity; in $2007$, Moothathu \cite{Moothathu} generalised results in \cite{BanksBrooksCairnsDavisStacey} and \cite{AkinAuslanderBerg} by defining stronger notions of sensitivity. These variations on sensitivity have since attracted an array of interest \cite{DasSalman,HuangKolyadaZhang,Li2,WangYinYan}.

For a survey on recent developments in chaos theory, including results on sensitivity, equicontinuity and transitivity, see \cite{LiYe}.

Recently there has been a move towards studying dynamical systems without assuming the underlying space is necessarily metric or compact. To do this novel definitions were needed to generalise concepts, such as sensitivity, which \textit{prima facie} appear to be inherently metric (or at least uniform). In \cite{GoodMacias} the authors introduce what they term \textit{Hausdorff sensitivity}; they show that this coincides with the usual notion of sensitivity if the ground space is compact metric. 
Topological equicontinuity was introduced by Royden in \cite{Royden}, which, in general, is weaker than equicontinuity. The concept of even continuity, introduced by Kelley \cite[p.~ 234]{Kelley}, dates back further than topological equicontinuity and is even weaker still, although all three concepts (i.e.~ equicontinuity, topological equicontinuity and even continuity) coincide in the presence of compactness (see \cite[Theorem 7.23]{Kelley}). In contrast to equicontinuity, which is an inherently uniform concept, neither topological equicontinuity nor even continuity require the phase space to be anything more than a topological space. 
Whilst the concepts of topological equicontinuity and even continuity have gained some attention with regard to topological semigroups and families of mappings in a general setting (e.g.~ \cite{CorbachoTraieladzeVidal1,CorbachoTraieladzeVidal2}), little appears to have been done with regard to dynamical systems. 

In this paper we take a careful look at the Auslander-Yorke dichotomy via a topological approach which leads to some interesting results: After the preliminaries in Section \ref{PreDefs}, we build up some theory related to topological equicontinuity in dynamical systems in Section \ref{SectionTopEq}. Two fruits of this theory are Corollary \ref{GenA-YDichI} - a generalisation of the Auslander-Yorke dichotomy - along with an exposition, with regard to topological equicontinuity, of when a system is transitive (Theorem \ref{ChT}). Section \ref{SectionEvenCty} starts by building up theory regarding even continuity in dynamical systems. This section culminates in a construction of a compact topologically transitive system with an even continuity pair but no point of even continuity; this provides an element of regularity in a system which is Auslander-Yorke chaotic, densely and strongly Li-Yorke chaotic, but not Devaney chaotic. In Section \ref{SectionSplitting} we discuss a property we call \textit{splitting} and its relationship to topological equicontinuity, even continuity and existing notions of chaos. Finally, in Section \ref{SectEventualSensitivity} we give a dichotomy for compact transitive systems (Theorem \ref{ESD1}); they are either equicontinuous or \textit{eventually sensitive}.

Throughout this paper $X$ is a topological space. Usually it is assumed to be Hausdorff, while some results rely on the additional assumption of compactness. We will always state the relevant assumptions. 

We denote by $\mathbb{Z}$ the set of all integers; the set of positive integers $1,2,3,4,\ldots$ is denoted by $\mathbb{N}$ whilst $\mathbb{N}_0 \coloneqq\mathbb{N}\cup\{0\}$.

\section{Preliminaries}\label{PreDefs} 

\subsection{Uniform spaces}

We start by providing some background on uniformities for those who are unfamiliar. The definitions in this section can be found in \cite{Willard}. Let $X$ be a set. The diagonal of the Cartesian product $X \times X$ is the set $\Delta=\{(x,x) \mid x \in X \}$. Given two subsets $A$ and $B$ of $X \times X$, we define the composition of these sets as $A \circ B = \{(x,z) \mid \text{there exists } y \in X \text{ such that } (x,y) \in B \text{ and } (y, z) \in A\}$. We write $nA$ to denote $\underbrace{A \circ A\circ \ldots \circ A}_{n \text{ times}}$. We define the inverse $A^{-1}=\{(x,y) \mid (y,x) \in A \}$. If $A \subseteq X \times X$ contains the diagonal $\Delta$ we say it is an {\em entourage of the diagonal}.

\begin{definition} A uniformity $\mathscr{D}$ on a set $X$ is a collection of entourages of the diagonal such that the following conditions are satisfied.

\begin{enumerate}[label=\alph*.]
\item $D_1, D_2 \in \mathscr{D} \implies D_1 \cap D_2 \in \mathscr{D}$.
\item $D \in \mathscr{D}, D \subseteq E \implies E \in \mathscr{D}$.
\item $D \in \mathscr{D} \implies E \circ E \subseteq D$ for some $E \in \mathscr{D}$.
\item $D \in \mathscr{D} \implies E^{-1}\subseteq D$ for some $E \in \mathscr{D}$.
\end{enumerate}

\end{definition}

We call the pair $(X, \mathscr{D})$ a {\em uniform space}. We say $\mathscr{D}$ is {\em separating} if $\bigcap_{D \in \mathscr{D}} D = \Delta$; in this case we say $X$ is {\em separated}. A subcollection $\mathscr{E}$ of $\mathscr{D}$ is said to be a {\em base} for $\mathscr{D}$ if for any $D \in\mathscr{D}$ there exists $E \in \mathscr{E}$ such that $E \subseteq D$. Clearly any base $\mathscr{E}$ for a uniformity will have the following properties:
\begin{enumerate}
\item $D_1, D_2 \in \mathscr{D} \implies$ there exists $ E \in \mathscr{E}$ such that $ E \subseteq D_1 \cap D_2 $.
\item $D \in \mathscr{D} \implies E \circ E \subseteq D$ for some $E \in \mathscr{E}$.
\item $D \in \mathscr{D} \implies E^{-1}\subseteq D$ for some $E \in \mathscr{E}$.
\end{enumerate}
If $\mathscr{D}$ is separating then $\mathscr{E}$ will satisfy $\bigcap_{E \in \mathscr{E}} E = \Delta$. A {\em subbase} for $\mathscr{D}$ is a subcollection such that the collection of all finite intersections from said subcollection form a base. We say an entourage of the diagonal $D$ is {\em symmetric} if $D=D^{-1}$.



For an entourage $D \in \mathscr{D}$ and a point $x \in X$ we define the set $D[x]= \{y \in X \mid (x,y) \in D\}$. This naturally extends to a subset $A \subseteq X$; $D[A]= \bigcup_{x \in A}D[x]$. We emphasise that (see \cite[Section 35.6]{Willard}):
\begin{itemize}
\item For all $x \in X$, the collection $\mathscr{U}_x \coloneqq \{ D[x] \mid D \in \mathscr{D} \}$ is a neighbourhood base at $x$, making $X$ a topological space. The same topology is produced if any base $\mathscr{E}$ of $\mathscr{D}$ is used in place of $\mathscr{D}$.
\item The topology is Hausdorff if and only if $\mathscr{D}$ is separating.
\end{itemize}



A topological space is said to be \textit{Tychonoff}, or $T_{3\frac{1}{2}}$, if it is both Hausdorff and \textit{completely regular} (i.e. points and closed sets can be separated by a bounded continuous real-valued function). A topological space is Tychonoff precisely when it admits a separating uniformity. Finally we remark that for a compact Hausdorff space $X$ there is a unique uniformity $\mathscr{D}$ which induces the topology (see \cite[Section 8.3.13]{Engelking}). 

\subsection{Dynamical systems}

For those wanting a thorough introduction to topological dynamics, \cite{deVries} is an excellent resource. Most of the definitions in this section are standard and can be found there.

A {\em dynamical system} is a pair $(X,f)$ consisting of a topological space $X$ and a continuous function $f\colon X \to X$. For any $x \in X$ we denote the set of neighbourhoods of $x$ by $\mathcal{N}_x$; the elements of this set are not assumed to be open. We say the orbit of $x$ under $f$ is the set of points $\{x, f(x), f^2(x), \ldots\}$; we denote this set by $\Orb_f(x)$. We say $x$ is {\em periodic} if there exists $n \in \mathbb{N}$ such that $f^n(x)=x$; the least such $n$ is called the {\em period} of $x$; if $n=1$ we say $x$ is a \textit{fixed point}. A point $x \in X$ is {\em eventually periodic} if there exists $y \in \Orb_f(x)$ such that $y$ is periodic. It immediately follows that $\Orb_f(x)$ is finite if and only if $x$ is eventually periodic. For $x \in X$, we define the $\omega$\textit{-limit set} of $x$ under $f$, denoted $\omega_f(x)$, or simply $\omega(x)$ where there is no ambiguity, to be the set of limit points of the sequence $\left(f^n(x)\right)_{n \in \mathbb{N}}$. Formally
\[\omega_f(x)= \bigcap_{N \in \mathbb{N}}\overline{\{f^n(x) \mid n >N\}}.\]
This means that $y \in \omega_f(x)$ if and only if for every neighbourhood $U$ of $y$ and every $N \in \mathbb{N}$ there exists $n >N$ such that $f^n(x) \in U$. If $X$ is compact $\omega_f(x) \neq \emptyset$ for any $x \in X$ by Cantor's intersection theorem. Notice that $\overline{\Orb_f(x)}=\Orb_f(x) \cup \omega_f(x)$. A point $x$ is said to be \textit{recurrent} if $x \in \omega(x)$. It is said to be \textit{non-wandering} if, for any neighbourhood $U \in \mathcal{N}_x$ and any $N \in \mathbb{N}$ there is $n>N$ such that $f^n(U) \cap U \neq \emptyset$. Clearly a recurrent point is non-wandering. We define the \textit{non-wandering set} of $x$, denoted $\Omega_f(x)$, by saying that $y \in \Omega_f(x)$ if and only if for any $V \in \mathcal{N}_y$, any $U \in \mathcal{N}_x$ and any $N \in \mathbb{N}$ there exists $n >N$ such that $f^n(U) \cap V \neq \emptyset$. It follows that, for any $x \in X$, $\omega(x) \subseteq \Omega(x)$.

When $X$ is a compact Hausdorff space we will denote the unique uniformity associated with $X$ by $\mathscr{D}_X$ or usually simply $\mathscr{D}$ if there is no ambiguity. Given $A,B \subseteq X$, we denote by $N(A,B)$ the (forward) hitting times of $A$ on $B$ under $f$; specifically
\begin{equation}
    N(A,B)=\{n \in \mathbb{N} \mid f^n(A) \cap B \neq \emptyset \}.
\end{equation}
If $x \in X$ and $B \subseteq X$, we will abuse notation by writing $N(x,B)$ instead of $N(\{x\},B)$.
A dynamical system $(X,f)$ is \textit{topologically transitive}, or simply \textit{transitive}, when, for any pair of nonempty open sets $U$ and $V$, $N(U,V) \neq \emptyset$. It is {\em weakly mixing} if the product system $(X\times X, f \times f)$ is transitive. A point $x \in X$ is said to be a \textit{transitive point} if $\omega(x)=X$. A system $(X,f)$ is said to be minimal if $\omega(x)=X$ for all $x \in X$; equivalently, if there are no proper, nonempty, closed, positively-invariant subsets of $X$. (A subset $A \subseteq X$ is said to be \textit{positively invariant} (under $f$) if $f(A) \subseteq A$.)

In \cite{AkinCarlson} the authors introduce the concept of a \textit{density basis}; a density basis for a topological space $X$ is a collection $\mathcal{V}$ of nonempty open sets in $X$ such that if $A \subseteq X$ is such that $A \cap V \neq \emptyset$ for any $V \in \mathcal{V}$, then $\overline{A}=X$. 
They go on to show that if $X$ is of Baire second category (i.e. non-meagre) and has a countable density basis then topological transitivity is equivalent to the existence of a transitive point. Topologists may be more familiar with the concept of a \textit{$\pi$-base} than a density basis.

\begin{definition} A $\pi$-base for a topological space $X$ is a collection $\mathcal{U}$ of nonempty open sets in $X$ such that if $R$ is any nonempty open set in $X$ then there exists $V \in \mathcal{U}$ such that $V \subseteq R$.
\end{definition}

\begin{proposition}\label{PropPiBaseIFF}
Let $X$ be a topological space. A collection is a $\pi$-base if and only if it is a density basis.
\end{proposition}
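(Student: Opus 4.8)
The plan is to prove the two implications separately, in each case arguing by contradiction and exploiting the elementary duality between a nonempty open set $R$ and its closed complement $X\setminus R$. No topological input beyond "the complement of an open set is closed" and "$\overline{A}=X$ fails iff $X\setminus\overline{A}$ is a nonempty open set disjoint from $A$" will be needed, so the proof should be short.

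For the forward direction, suppose $\mathcal{U}$ is a $\pi$-base and let $A\subseteq X$ be any set with $A\cap V\neq\emptyset$ for every $V\in\mathcal{U}$; I would show $\overline{A}=X$. If not, then $R\coloneqq X\setminus\overline{A}$ is a nonempty open set, so by the $\pi$-base property there is some $V\in\mathcal{U}$ with $V\subseteq R$. But then $V$ is disjoint from $\overline{A}$, hence from $A$, contradicting $A\cap V\neq\emptyset$. Thus $\overline{A}=X$ and $\mathcal{U}$ is a density basis.

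For the reverse direction, suppose $\mathcal{U}$ is a density basis and, for contradiction, that it is not a $\pi$-base; then there is a nonempty open set $R$ that contains no member of $\mathcal{U}$. Put $A\coloneqq X\setminus R$. For each $V\in\mathcal{U}$, the failure of $V\subseteq R$ means $V\cap A\neq\emptyset$, so $A$ meets every element of $\mathcal{U}$; the density basis hypothesis then gives $\overline{A}=X$. But $A$ is closed and $R$ is nonempty, so $\overline{A}=A=X\setminus R\neq X$, a contradiction; hence $\mathcal{U}$ is a $\pi$-base.

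There is essentially no hard step here. The only point requiring a little care is matching up the quantifiers in the two definitions: the density basis condition quantifies over \emph{all} subsets $A$ while the $\pi$-base condition quantifies over \emph{all} nonempty open sets $R$, and in each direction the witness one produces (namely $X\setminus\overline{A}$, respectively $X\setminus R$) must be checked to lie in the appropriate class — open in the first case, so that the $\pi$-base property applies, and closed with proper closure in the second, so that the conclusion $\overline{A}=X$ is contradicted.
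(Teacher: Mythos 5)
Your proof is correct and follows essentially the same route as the paper: the forward direction is the paper's direct density argument recast as a contradiction, and the reverse direction is the paper's contradiction argument with the slightly simpler witness $A=X\setminus R$ in place of the paper's $\bigcup_{U\in\mathcal{U}} U\setminus W$ (both sets meet every member of $\mathcal{U}$ yet miss $R$, which is all that is needed). No gaps.
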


\begin{proof}
Note first that both are defined as collections of nonempty open sets.

Suppose $\mathcal{U}$ is a $\pi$-base. Suppose $A \subseteq X$ is such that $A \cap U \neq \emptyset$ for all $U \in \mathcal{U}$. Let $W$ be open and nonempty. Then there exists $U \in \mathcal{U}$ such that $U \subseteq W$. Then $A \cap U \neq \emptyset$; therefore $A \cap W \neq \emptyset$ and so $\overline{A}=X$.

Now suppose $\mathcal{U}$ is a density basis. Assume $\mathcal{U}$ is not a $\pi$-base. Then there exists a nonempty open set $W$ such that $U\not\subseteq W$ for any $U\in \mathcal{U}$. This means that $U \setminus W \neq \emptyset$ for any $U \in \mathcal{U}$. Take \[A =\bigcup _{U \in \mathcal{U}} U \setminus W.\]
It follows that $A \cap W =\emptyset$ and, for each $U \in \mathcal{U}$, $A \cap U \neq \emptyset$. Since $\mathcal{U}$ is a density basis the latter entails $\overline{A}=X$, contradicting the fact that $A \cap W =\emptyset$. Hence $\mathcal{U}$ is a $\pi$-base.
\end{proof}


The following lemma is folklore (e.g. \cite{AkinCarlson}) and will be useful throughout.

\begin{lemma}\label{lemmaEq0.1}
Let $(X,f)$ be a dynamical system, where $X$ is a Hausdorff space. Then $(X,f)$ is topologically transitive if and only if $N(U,V)$ is infinite for any pair of nonempty open sets $U$ and $V$.
\end{lemma}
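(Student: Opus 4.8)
The plan is to prove the two implications separately, with all the work in the forward direction. The reverse implication is immediate: if $N(U,V)$ is infinite for every pair of nonempty open sets then in particular it is nonempty, which is exactly topological transitivity; here Hausdorffness plays no role at all.

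For the forward direction, fix nonempty open sets $U$ and $V$. Transitivity already gives $N(U,V)\neq\emptyset$, so I would argue by contradiction: assume $N(U,V)$ is finite and let $m=\max N(U,V)\geq 1$. The key idea is to feed back into the definition of transitivity an open set that already ``knows'' it needs $m$ further steps to land in $V$. Concretely, set $W=U\cap f^{-m}(V)$. Since $m\in N(U,V)$ there is a point of $U$ whose $m$-th iterate lies in $V$, so $W\neq\emptyset$; and $W$ is open because $f$ is continuous and $U,V$ are open.

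Now apply transitivity to the pair $(W,W)$: there is some $k\in\mathbb{N}$ (so $k\geq 1$) with $f^k(W)\cap W\neq\emptyset$, hence a point $w\in W$ with $f^k(w)\in W$. Because $W\subseteq f^{-m}(V)$, we get $f^{m+k}(w)=f^m\bigl(f^k(w)\bigr)\in V$; and because $w\in W\subseteq U$, this shows $f^{m+k}(U)\cap V\neq\emptyset$, i.e. $m+k\in N(U,V)$. Since $k\geq 1$ this contradicts $m=\max N(U,V)$, so $N(U,V)$ must be infinite.

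The only step that needs a real idea is the choice of the auxiliary set $W$: applying transitivity directly to $(U,V)$, or even to $(V,V)$, is circular, since transitivity only supplies \emph{some} hitting time, not a large one. The point is that a single recurrence of $W$ back to itself can be post-composed with the delay $m$ that is built into $W$ by construction, and this manufactures a hitting time strictly larger than $m$. I would also remark in passing that this argument uses only transitivity and continuity of $f$, so the Hausdorff hypothesis in the statement is not actually needed for the conclusion.
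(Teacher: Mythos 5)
Your proof is correct. Note that the paper itself gives no argument for this lemma --- it is stated as folklore with a citation to Akin--Carlson --- and your argument (apply transitivity to the nonempty open set $W=U\cap f^{-m}(V)$ and post-compose its return to itself with the delay $m$ to beat the supposed maximum of $N(U,V)$) is precisely the standard folklore proof being invoked. Your closing observation is also accurate: the argument uses only continuity and the positive-time definition of transitivity, so the Hausdorff hypothesis in the statement reflects the paper's standing assumptions rather than a genuine need in this proof.
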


\begin{remark}\label{remarkOmegaTrans}
It follows from Lemma \ref{lemmaEq0.1} that, for a transitive system $(X,f)$ where $X$ is a Hausdorff space, we have $\Omega(x)=X$ for any $x \in X$.
\end{remark}

For the rest of this section $X$ is a Tychonoff space.

Suppose $\mathscr{D}$ is a compatible uniformity for $X$. Let $U \subseteq X$ and let $D \in \mathscr{D}$ be symmetric. Define 
\begin{equation}
    N_D(U)=\{n \in \mathbb{N} \mid \exists x,y \in U \text{ such that } (f^n(x),f^n(y))\notin D \}.
\end{equation}
We say a system exhibits \textit{sensitive dependence on initial conditions} (or is \textit{sensitive}) if there exist a compatible uniformity $\mathscr{D}$ and a symmetric $D \in \mathscr{D}$ such that $N_D(U) \neq \emptyset$ for any nonempty open $U \subseteq X$. In this case we say $D$ is a sensitivity entourage $(X,f)$. 
If $X$ is a metric space, for $U \subseteq X$ and $\delta>0$ we define 
\begin{equation}
    N_\delta(U)=\{n \in \mathbb{N} \mid \exists x,y \in U \text{ such that } d(f^n(x),f^n(y))\geq \delta \}.
\end{equation}
In this case we say the system is sensitive if there exists $\delta>0$ such that $N_\delta(U) \neq \emptyset$ for any nonempty open set $U$. The definitions for a metric space coincide when it is equipped with the metric uniformity (see \cite{GoodMacias}). We invite readers unfamiliar with uniformities to notice the similarities in these definitions; it may be helpful for such readers to view the statement, ``there exists $D\in\mathscr{D}$ such that $(x,y) \in D$,'' as, ``there exists $\delta>0$ such that $d(x,y) < \delta$''. Similarly ``$(x,y) \notin D$" may be read as ``$d(x,y) \geq \delta$". In this way, $D[x]$ may be thought of as $B_\delta(x)$. The uniform structure of a space can be used to mimic existing metric proofs (see, for example, \cite{GoodMacias}). In the proof of the following lemma, which is folklore, we invite the reader to observe how entourages have simply replaced the real numbers which would have designated distances for a metric version.


\begin{lemma}\label{R0.1} If $X$ is a Tychonoff space and $(X,f)$ is a sensitive dynamical system, with sensitivity $D \in \mathscr{D}$, where $\mathscr{D}$ is a compatible uniformity for $X$, then for any nonempty open $U \subseteq X$ the set $N_D(U)$ is infinite.
\end{lemma}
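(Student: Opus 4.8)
The plan is to argue by contradiction, mimicking the standard metric proof with entourages replacing $\delta$-balls. Suppose $U$ is a nonempty open set and $N_D(U)$ is finite. Since $D$ is a sensitivity entourage, $N_D(U)\neq\emptyset$, so we may put $N=\max N_D(U)\geq 1$; thus for every $n>N$ and all $x,y\in U$ we have $(f^n(x),f^n(y))\in D$. The aim is to produce a nonempty open set $V\subseteq U$ with $N_D(V)=\emptyset$, which will contradict sensitivity.

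First I would pick a symmetric $E\in\mathscr{D}$ with $E\circ E\subseteq D$: take $E_0\in\mathscr{D}$ with $E_0\circ E_0\subseteq D$ by axiom (c) and replace it by the symmetric entourage $E_0\cap E_0^{-1}\in\mathscr{D}$ (axioms (d), (a)), noting $E\circ E\subseteq E_0\circ E_0\subseteq D$. Fix a point $z\in U$. For each $n\in\{1,\dots,N\}$, continuity of $f^n$ at $z$ applied to the neighbourhood $E[f^n(z)]$ of $f^n(z)$ yields an entourage $D_n\in\mathscr{D}$ with $f^n(D_n[z])\subseteq E[f^n(z)]$; since $U\in\mathcal{N}_z$ there is also $D_0\in\mathscr{D}$ with $D_0[z]\subseteq U$. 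Set $D'=D_0\cap D_1\cap\dots\cap D_N\in\mathscr{D}$ and choose a nonempty open $V$ with $z\in V\subseteq D'[z]$ (possible since $D'[z]$ is a neighbourhood of $z$).

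Finally I would check that $N_D(V)=\emptyset$. If $1\leq n\leq N$ and $x,y\in V\subseteq D_n[z]$, then $f^n(x),f^n(y)\in E[f^n(z)]$, i.e.\ $(f^n(z),f^n(x))\in E$ and $(f^n(z),f^n(y))\in E$; symmetry of $E$ gives $(f^n(x),f^n(z))\in E$, and hence $(f^n(x),f^n(y))\in E\circ E\subseteq D$. So no $n\leq N$ lies in $N_D(V)$. On the other hand $V\subseteq U$ gives $N_D(V)\subseteq N_D(U)\subseteq\{1,\dots,N\}$, so no $n>N$ lies in $N_D(V)$ either. Thus $N_D(V)=\emptyset$ for the nonempty open set $V$, contradicting the hypothesis that $D$ is a sensitivity entourage for $(X,f)$; therefore $N_D(U)$ must be infinite. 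I do not anticipate a genuine obstacle: the only step needing care is rendering the metric ``$\diam f^n(V)$ small'' argument in entourage language, which is precisely why the intermediate symmetric $E$ with $E\circ E\subseteq D$ is introduced — one controls the $E$-distance of $f^n(x)$ and $f^n(y)$ from $f^n(z)$ rather than a diameter directly. The rest is bookkeeping with the uniformity axioms and the monotonicity $V\subseteq U\Rightarrow N_D(V)\subseteq N_D(U)$.
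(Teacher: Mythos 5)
Your proof is correct and follows essentially the same route as the paper: assume $N_D(U)$ is finite with upper bound $N$, pick a symmetric $E$ with $E\circ E\subseteq D$, use continuity of $f^1,\dots,f^N$ at a point of $U$ to shrink to a small (open) neighbourhood inside $U$ on which no $n\leq N$ can witness a $D$-split, and contradict sensitivity. The only cosmetic difference is that you conclude $N_D(V)=\emptyset$ directly, whereas the paper notes $N_D(W)\neq\emptyset$ and deduces a split at some $n>k$ inside $U$; these are the same contradiction.
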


\begin{proof}
Let $U \subseteq X$ be nonempty open and suppose $N_D(U)$ is finite; let $k\in \mathbb{N}$ be an upper bound for this set. Let $E \in \mathscr{D}$ be such that $2E \subseteq D$. Let $x \in U$. By continuity we may choose a symmetric entourage $D_0 \in \mathscr{D}$ such that, for any $y \in X$, if $(x,y) \in D_0$ then $(f^i(x),f^i(y)) \in E$ for all $i \in \{1,\ldots , k\}$. Consider the set $W \coloneqq U \cap D_0[x]$; $W$ is a neighbourhood of $x$. Thus $N_D(W) \neq \emptyset$ by sensitivity, but $f^i(W) \subseteq E[f^i(x)]$ for $i \in \{1, \ldots, k\}$; in particular if $y,z \in W$ then $(f^i(y),f^i(z))\in D$ for $i \in \{0, 1, \ldots, k\}$. Therefore there exists $n>k$ and $y,z \in W$ such that $(f^n(y),f^n(z))\notin D$. As $W \subseteq U$ we have a contradiction and the result follows.
\end{proof}

Let $X$ now be a compact Hausdorff space. A point $x \in X$ is said to be an \textit{equicontinuity point} of the system $(X,f)$ if 
\begin{equation} \forall E \in \mathscr{D} \, \exists D \in \mathscr{D}: \forall n \in \mathbb{N}, \,  y \in D[x] \implies f^n(y) \in E[f^n(x)].
\end{equation}
In this case we say $(X,f)$ is \textit{equicontinuous at} $x$. If $(X,f)$ is equicontinuous at every $x \in X$ then we say the system itself is \textit{equicontinuous}. When $X$ is compact this is equivalent to the system being \textit{uniformly equicontinuous}, that is
\begin{equation}\label{eqn0.6} \forall E \in \mathscr{D} \, \exists D \in \mathscr{D}: \forall n \in \mathbb{N}, \,  (x,y) \in D \implies (f^n(x),f^n(y)) \in E.
\end{equation}
We denote the set of all equicontinuity points by $\Eq(X,f)$, so a system is equicontinuous if $\Eq(X,f)=X$. 




The following results will be useful; versions for compact metric systems may be found in \cite{AkinAuslanderBerg}, their proofs may be mimicked to give the following more general versions.

\begin{lemma}\textup{\cite{AkinAuslanderBerg}}\label{lemma1Omega(x)omega(x)} Let $(X,f)$ be a dynamical system, where $X$ is a compact Hausdorff space. If $x \in \Eq(X,f)$ then $\omega_f(x) = \Omega_f(x)$.
\end{lemma}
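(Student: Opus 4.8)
The plan is to prove the nontrivial inclusion $\Omega_f(x)\subseteq\omega_f(x)$, since $\omega_f(x)\subseteq\Omega_f(x)$ always holds (as noted in the preliminaries). So let $x\in\Eq(X,f)$ and fix $y\in\Omega_f(x)$; we must show $y\in\omega_f(x)$. By definition of $\omega_f(x)$ it suffices, given an arbitrary $E\in\mathscr{D}$ and $N\in\mathbb{N}$, to produce $n>N$ with $f^n(x)\in E[y]$.

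The key idea is to exploit equicontinuity at $x$ to transfer the ``returning near $x$'' information encoded in $y\in\Omega_f(x)$ into ``$f^n(x)$ returns near $y$'' information. First I would shrink $E$: choose a symmetric $E_0\in\mathscr{D}$ with $3E_0\subseteq E$ (using conditions (c) and (d) of a uniformity, iterated). Apply equicontinuity of $(X,f)$ at $x$ to $E_0$ to obtain $D\in\mathscr{D}$ such that $z\in D[x]$ implies $f^m(z)\in E_0[f^m(x)]$ for all $m\in\mathbb{N}$. We may take $D$ symmetric and shrink it so that $D\subseteq E_0$. Now $U\coloneqq D[x]$ is a neighbourhood of $x$, and $V\coloneqq E_0[y]$ is a neighbourhood of $y$. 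Since $y\in\Omega_f(x)$, for our given $N$ there exists $n>N$ such that $f^n(U)\cap V\neq\emptyset$: pick $w\in U$ with $f^n(w)\in V$, i.e. $(f^n(w),y)\in E_0$. But $w\in D[x]=U$, so by the choice of $D$ we have $(f^n(x),f^n(w))\in E_0$. Combining, $(f^n(x),y)\in E_0\circ E_0\subseteq 3E_0\subseteq E$ (the diagonal is in every entourage, so $2E_0\subseteq 3E_0$), hence $f^n(x)\in E[y]$. Since $E$ and $N$ were arbitrary, $y\in\omega_f(x)$.

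I do not expect any serious obstacle here; this is a routine ``$\epsilon/3$'' argument translated into the language of entourages, and the excerpt explicitly invites the reader to observe that entourages simply replace the distances of a metric proof (cf.\ the proof of Lemma \ref{R0.1}). The one point requiring mild care is the bookkeeping of compositions: one must be slightly careful that the composition $E_0\circ E_0$ lands inside $E$, which follows from $2E_0\subseteq 3E_0\subseteq E$ since every entourage contains $\Delta$, and that the entourage $D$ furnished by equicontinuity can be taken symmetric (guaranteed by intersecting with its inverse, using axioms (a) and (d)). Compactness of $X$ is used only implicitly, in that it guarantees the unique uniformity $\mathscr{D}$ exists and that the equicontinuity-at-$x$ hypothesis is available in the stated uniform form; no compactness argument beyond that is needed for this inclusion.
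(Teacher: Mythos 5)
Your proof is correct and is essentially the paper's approach: the paper gives no argument of its own for this lemma, citing Akin \textit{et al.} and noting that the compact metric proof "may be mimicked" via the uniformity, which is exactly the entourage translation of the $\epsilon/3$-argument you carry out (and it is watertight — the only cosmetic remark is that $2E_0\subseteq E$ already suffices, the third composition being harmless slack).
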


\begin{theorem}\label{thmEqPtsCoincideWithTransPts}\textup{\cite{AkinAuslanderBerg}}
Let $(X,f)$ be a transitive dynamical system where $X$ is a compact Hausdorff space. If $\Eq(X,f) \neq \emptyset$ then the set of equicontinuity points coincide with the set of transitive points.
\end{theorem}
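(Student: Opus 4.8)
The plan is to prove the two inclusions separately. First I would show that every transitive point is an equicontinuity point, using the hypothesis $\Eq(X,f)\neq\emptyset$. Fix $z\in\Eq(X,f)$ and let $x$ be any transitive point. Given $E\in\mathscr{D}$, choose a symmetric $E'\in\mathscr{D}$ with $3E'\subseteq E$ (using axioms (c) and (d)), and apply equicontinuity at $z$ to get $D\in\mathscr{D}$ (which we may take symmetric) so that $y\in D[z]$ implies $f^n(y)\in E'[f^n(z)]$ for all $n\in\mathbb{N}$. Since $x$ is transitive, $\omega_f(x)=X$, so there is some $k\in\mathbb{N}$ with $f^k(x)\in D'[z]$ for a suitably small $D'$; more carefully, pick $k$ so that $f^k$ maps a neighbourhood of $x$ into $D[z]$ and so that $f^k(x)$ is close to $z$. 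The idea is that once the orbit of $x$ enters the ``good'' ball around $z$, the uniform control at $z$ transfers back along the finitely many initial steps $1,\dots,k$ by continuity of $f^k$ (compactness is not even needed here, just continuity), giving a neighbourhood $D''[x]$ of $x$ on which $f^n$ stays $E$-close to $f^n(x)$ for all $n\ge k$; the finitely many steps $n<k$ are handled by shrinking $D''$ further via uniform continuity of $f,f^2,\dots,f^{k-1}$. Hence $x\in\Eq(X,f)$.

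Second I would show the reverse inclusion: every equicontinuity point is a transitive point. Here I would invoke Lemma \ref{lemma1Omega(x)omega(x)}, which gives $\omega_f(x)=\Omega_f(x)$ for $x\in\Eq(X,f)$, together with Remark \ref{remarkOmegaTrans}, which states that transitivity forces $\Omega_f(x)=X$ for every $x$. Combining these yields $\omega_f(x)=X$, i.e.\ $x$ is a transitive point. This direction is essentially immediate once the two cited facts are in hand.

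The main obstacle is the first inclusion, and specifically the bookkeeping needed to transport the uniform estimate at $z$ back to $x$ through the initial segment of the orbit. One has to be careful about the order of quantifiers: first fix the target $E$ and its refinement, then extract $D$ from equicontinuity at $z$, then choose the hitting time $k$ with $f^k(x)\in D[z]$ (possible since $\omega_f(x)=X\ni z$), then use continuity of $f^k$ at $x$ to pull $D[z]$ back to a neighbourhood of $x$, and only then absorb the finitely many early iterates using a further uniform refinement. A clean way to package this: for $n\ge k$ write $n=k+m$ and note $f^n(x)=f^m(f^k(x))$ while $f^n(y)=f^m(f^k(y))$, with $(f^k(x),f^k(y))\in D$; but to conclude $(f^n(x),f^n(y))\in E$ we actually want equicontinuity controlling $f^m$ at the point $f^k(x)$ — so it is cleaner to instead use that $f^k(x)$ lies in $\omega_f(x)$ and that $\omega$-limit points of equicontinuous systems can be approximated in a controlled way, or simply to note $z$ itself can be taken as the hitting target and appeal directly to the estimate at $z$ for the tail $f^{n}(y)$ versus $f^{n-k}(z)$ versus $f^{n-k}(f^k(x))$. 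I would write out this chain of three entourages explicitly, which is where the bulk of the (routine) work lies.
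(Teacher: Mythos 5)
Your proposal is correct and takes essentially the approach the paper intends for this statement: the paper gives no written proof, citing Akin--Auslander--Berg with the remark that their metric proofs may be mimicked in the uniform setting, and your argument (the orbit of the transitive point enters an entourage-ball around the equicontinuity point $z$, the tails $f^n(x)$ and $f^n(y)$ are then compared through $f^{n-k}(z)$ by a chain of two or three entourages, and the finitely many initial iterates are absorbed by continuity) is precisely that mimicry, with the one genuine subtlety --- that the uniform estimate lives at $z$, so one must route the comparison through the orbit of $z$ rather than invoke equicontinuity at $f^k(x)$ --- correctly identified and resolved. Your other inclusion, via Lemma \ref{lemma1Omega(x)omega(x)} and Remark \ref{remarkOmegaTrans}, matches exactly how the paper argues the corresponding direction in its later generalisation (Theorem \ref{thmTopEqPtsCoincideWithTransPts}).
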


\begin{corollary}
Let $(X,f)$ be a transitive dynamical system where $X$ is a compact Hausdorff space. If $X$ is not separable then $\Eq(X,f)=\emptyset$.
\end{corollary}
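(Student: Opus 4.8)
The plan is to derive the statement directly from Theorem \ref{thmEqPtsCoincideWithTransPts}, using the simple observation that the existence of a transitive point forces $X$ to be separable. So I would argue by contradiction: suppose $\Eq(X,f) \neq \emptyset$. Since $(X,f)$ is transitive and $X$ is compact Hausdorff, Theorem \ref{thmEqPtsCoincideWithTransPts} applies and tells us that $\Eq(X,f)$ coincides with the set of transitive points of $(X,f)$. In particular, as $\Eq(X,f)$ is nonempty, there is some transitive point $x \in X$, that is, a point with $\omega_f(x) = X$.

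Now I would invoke the identity noted in the preliminaries, $\overline{\Orb_f(x)} = \Orb_f(x) \cup \omega_f(x)$. Combined with $\omega_f(x) = X$ this yields $\overline{\Orb_f(x)} = X$. But $\Orb_f(x) = \{f^n(x) \mid n \in \mathbb{N}_0\}$ is a countable set, so it is a countable dense subset of $X$; hence $X$ is separable, contradicting the hypothesis. It follows that $\Eq(X,f) = \emptyset$, as claimed.

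I do not expect any genuine obstacle here: essentially all of the work is carried by Theorem \ref{thmEqPtsCoincideWithTransPts} together with the elementary fact that a dense orbit is a countable dense subset. The only point I would take care to spell out is that the theorem yields an \emph{actual} transitive point once $\Eq(X,f)$ is nonempty (rather than merely equating two a priori possibly-empty sets), which is exactly what lets us conclude separability.
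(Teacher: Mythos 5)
Your proposal is correct and follows the same route as the paper: apply Theorem \ref{thmEqPtsCoincideWithTransPts} to get a transitive point from a nonempty $\Eq(X,f)$, then note that a dense orbit is a countable dense subset, contradicting non-separability. The only difference is that you spell out the step $\overline{\Orb_f(x)} = \Orb_f(x) \cup \omega_f(x) = X$, which the paper leaves implicit.
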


\begin{proof}
If $\Eq(X,f) \neq \emptyset$ then every equicontinuity point is a transitive point. If the system has a transitive point then it has a countable dense subset and is thereby separable.
\end{proof}

We end this section of the preliminaries with three common notions of chaos. A dynamical system is said to be Auslander-Yorke chaotic (see \cite{AuslanderYorke}) if it is both transitive and sensitive. If, in addition, it has a dense set of periodic points it is said to be Devaney chaotic (see \cite{Devaney}).

If $X$ is a metric space and $(X,f)$ a dynamical system, then we say a pair $(x,y) \in X \times X$ is \textit{proximal} if \[\liminf_{n\rightarrow \infty} d(f^n(x),f^n(y))=0,\] 
and \textit{asymptotic} if \[\limsup_{n\rightarrow \infty} d(f^n(x),f^n(y))=0.\] 
The pair $(x,y)$ is said to be a \textit{Li-Yorke pair} if they are proximal but not asymptotic. It is said to be a \textit{strong Li-Yorke pair} if it is both a Li-Yorke pair and recurrent in the product system $(X^2, f\times f)$. A set $S \subseteq X$ is said to be \textit{scrambled} if every pair of distinct points in $S$ form a Li-Yorke pair; it is said to be \textit{strongly scrambled} if every pair of distinct points in $S$ form a strong Li-Yorke pair. A system $(X,f)$ is said to be \textit{Li-Yorke chaotic} (see \cite{LiYorke}) if there exists an uncountable scrambled set $S$. If $S$ is strongly scrambled we say $(X,f)$ is \textit{strongly Li-Yorke chaotic}. Finally if $S$ is dense in $X$ then we say the system is \textit{densely Li-Yorke chaotic} \cite[Section 7.3]{deVries}.

\subsection{Shift spaces}\label{SectionShiftSpaces}

Given a finite set $\Sigma$ considered with the discrete topology, \textit{the full one sided shift with alphabet }$\Sigma$ consists of the set of infinite sequences in $\Sigma$, that is $\Sigma^{\mathbb{N}_0}$, which we consider with the product topology. This forms a dynamical system with the \textit{shift map} $\sigma$, given by \[\sigma\left(\langle x_i \rangle _{i\geq 0}\right)=\langle x_i \rangle _{i \geq 1}. \] 
A \textit{shift space} is some compact positively-invariant (under $\sigma$) subset of some full shift. Let $X$ be a shift space, with alphabet $\Sigma$. Given a finite word, $a_0 a_1\ldots a_m$, made up of elements of $\Sigma$, we denote by $[a_0a_1 \ldots a_m]$ the \textit{cylinder set }induced by the word $a_0 a_1\ldots a_m$; this is all points in $X$ which begin with `$a_0 a_1\ldots a_m$'. The collection of all cylinder sets intersected with $X$ form a base for the induced subspace topology from the Tychonoff product $\Sigma ^{\mathbb{N}_0}$. For a symbol $a \in \Sigma$, we use the notation $a^n$, for some $n \in \mathbb{N}$, to mean
\[\underbrace{aaa\ldots a}_{\text{$n$ times.}}\]
For a word $W$, we use $\abs{W}$ to denote the length of $W$. So if $W=w_0w_1w_2\ldots w_n$, then $\abs{W}=n+1$. For the word $W$, we refer to the set $\{w_kw_{k+1}\ldots w_{k+j}\mid 0 \leq k \leq n, \, 0 \leq j \leq n-k \}$ as the \textit{set of all subwords of} $W$; the elements of this set are called subwords of $W$. We refer to any subword of the form $w_0w_1\ldots w_k$, for some $k\leq n$, as an \textit{initial segment} of $W$. In similar fashion, if $x= \langle x_i \rangle _{i \geq 0}\in \Sigma^{\mathbb{N}_0}$ and $n \in \mathbb{N}_0$, we refer to $x_0x_1\ldots x_n$ as an \textit{initial segment} of $x$.

For those wanting more information about shift systems, \cite[Chapter 5]{deVries} provides a thorough introduction to the topic.

\section{Topological equicontinuity and the Auslander-Yorke Dichotomy}\label{SectionTopEq}


Previously we defined equicontinuity for compact Hausdorff dynamical systems. More generally \cite{Willard}, if $X$ is any topological space and $Y$ a uniform space, we say that a family $\mathscr{F}$ of continuous functions from $X$ to $Y$ is {\em equicontinuous at $x \in X$} if for each $E \in \mathscr{D}_Y$ there exists $U \in \mathcal{N}_x$ such that, for each $f \in \mathscr{F}$, $f(U) \subseteq E[f(x)]$. We say $\mathscr{F}$ is {\em equicontinuous} provided it is equicontinuous at each point of $X$. To generalise this to arbitrary spaces, Royden \cite{Royden} presents the following concept of \textit{topological equicontinuity}. If $X$ and $Y$ are topological spaces we say a collection of maps $\mathscr{F}$ from $X$ to $Y$ is \textit{topologically equicontinuous} at an ordered pair $(x,y) \in X \times Y$ if for any $O \in \mathcal{N}_y$ there exist neighbourhoods $U \in \mathcal{N}_x$ and $V\in \mathcal{N}_y$ such that, for any $f \in \mathscr{F}$, if $f(U) \cap V \neq \emptyset$ then $f(U) \subseteq O$; when this is the case we refer to $(x,y)$ as an \textit{equicontinuity pair}. We say $\mathscr{F}$ is \textit{topologically equicontinuous} at a point $x \in X$ if it is topologically equicontinuous at $(x,y)$ for all $y\in Y$. We say the collection is \textit{topologically equicontinuous} if it is topologically equicontinuous at every $x \in X$.
If $(x,y)$ is an equicontinuity pair then we will say $y$ is an {\em equicontinuity partner} of $x$.

Topological equicontinuity and the usual notion of equicontinuity coincide when $Y$ is a compact Hausdorff space. 

\begin{theorem}\textup{\cite[p. 364]{Royden}}\label{EquivalencesOfTopEqAndEq} Let $X$ and $Y$ be topological spaces, with $\mathscr{F}$ a collection of continuous functions from $X$ to $Y$. Let $x \in X$. If $Y$ is a Tychonoff space and $\mathscr{F}$ is equicontinuous at $x$ then $\mathscr{F}$ is topologically equicontinuous at $x$. If $Y$ is a compact Hausdorff space then the collection $\mathscr{F}$ is equicontinuous at $x \in X$ if and only if it is topologically equicontinuous at $x$.
\end{theorem}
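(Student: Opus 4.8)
The plan is to prove the three implications in Theorem \ref{EquivalencesOfTopEqAndEq} directly from the definitions, using the standard relationship between uniformities and the topology they induce. Recall that for a compact Hausdorff $Y$, the unique uniformity $\mathscr{D}_Y$ has the property that $\{D[y] \mid D \in \mathscr{D}_Y\}$ is a neighbourhood base at each $y$; this is the only piece of machinery we really need, together with the axioms (c) and (d) of a uniformity (splitting and symmetrising entourages).

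\textbf{First implication (equicontinuity $\Rightarrow$ topological equicontinuity, $Y$ Tychonoff).} Suppose $\mathscr{F}$ is equicontinuous at $x$, and fix $y \in Y$ and $O \in \mathcal{N}_y$. Since $\mathscr{D}_Y$ is a compatible uniformity, choose $E \in \mathscr{D}_Y$ with $E[y] \subseteq O$, then a symmetric $E' \in \mathscr{D}_Y$ with $E' \circ E' \subseteq E$. By equicontinuity at $x$ there is $U \in \mathcal{N}_x$ with $f(U) \subseteq E'[f(x)]$ for every $f \in \mathscr{F}$. Put $V = E'[y]$. If $f(U) \cap V \neq \emptyset$, pick $z \in f(U) \cap V$ and any $w \in f(U)$; then $(y,z) \in E'$ and, since both $z,w \in E'[f(x)]$ and $E'$ is symmetric, $(z,w) \in E' \circ E'$, giving $(y,w) \in E' \circ E' \circ E' \subseteq E \circ E'$. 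A moment's care with the triangle chain — better, first shrink $E'$ so that $E' \circ E' \circ E' \subseteq E$ — yields $w \in E[y] \subseteq O$, so $f(U) \subseteq O$. Hence $(x,y)$ is an equicontinuity pair; as $y$ was arbitrary, $\mathscr{F}$ is topologically equicontinuous at $x$.

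\textbf{Second implication (topological equicontinuity $\Rightarrow$ equicontinuity, $Y$ compact Hausdorff).} Here we exploit compactness of $Y$. Suppose $\mathscr{F}$ is topologically equicontinuous at $x$, and fix $E \in \mathscr{D}_Y$; choose a symmetric $E' \in \mathscr{D}_Y$ with $E' \circ E' \circ E' \subseteq E$. For each $y \in Y$, topological equicontinuity at $(x,y)$ applied to the neighbourhood $E'[y]$ of $y$ supplies $U_y \in \mathcal{N}_x$ and $V_y \in \mathcal{N}_y$ such that $f(U_y) \cap V_y \neq \emptyset$ forces $f(U_y) \subseteq E'[y]$. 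We may take $V_y$ open. The sets $\{V_y\}_{y \in Y}$ cover the compact space $Y$, so finitely many $V_{y_1}, \dots, V_{y_k}$ suffice; set $U = \bigcap_{i=1}^k U_{y_i} \in \mathcal{N}_x$. Now take any $f \in \mathscr{F}$: the point $f(x) \in f(U)$ lies in some $V_{y_i}$, so $f(U) \cap V_{y_i} \neq \emptyset$ and hence $f(U) \subseteq E'[y_i]$. In particular $f(x) \in E'[y_i]$, so $f(U) \subseteq E'[y_i] \subseteq (E' \circ E')[f(x)] \subseteq E[f(x)]$ for every $f \in \mathscr{F}$. This is exactly equicontinuity at $x$. (The third assertion of the theorem is the conjunction of the first two once we note a compact Hausdorff space is Tychonoff.)

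\textbf{Expected obstacle.} The only delicate point is bookkeeping the entourage compositions: one must be disciplined about choosing a single symmetric $E'$ with $E' \circ E' \circ E' \subseteq E$ \emph{before} invoking equicontinuity or the cover, so that the final chaining of ``$(y,z)$, $(z, f(x))$, $(f(x),w)$''-type relations lands inside $E$. Translating via ``$(a,b) \in D$ reads as $d(a,b) < \delta$'' (as the paper suggests), this is nothing more than the usual $\varepsilon/3$ argument, but phrased uniformly it requires symmetry of $E'$ at each step; forgetting to symmetrise is the one way the argument can go wrong. The compactness extraction of a finite subcover is routine, as is the verification that neighbourhoods may be replaced by open neighbourhoods.
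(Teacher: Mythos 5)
Your proof is correct. Note that the paper itself offers no proof of this theorem --- it is quoted from Royden (and the compact Hausdorff case is Kelley's Theorem 7.23) --- so there is no internal argument to compare against; what you give is the standard one. Both halves check out under the paper's conventions: in the first implication the chain $(y,z)\in E'$, $(z,f(x))\in E'$, $(f(x),w)\in E'$ does require the shrunken symmetric $E'$ with $E'\circ E'\circ E'\subseteq E$, which you correctly flag and fix; in the second, the only step you elide is that the topological equicontinuity implication is stated for $U_{y_i}$ rather than $U=\bigcap_i U_{y_i}$, but since $U\subseteq U_{y_i}$ one gets $f(U_{y_i})\cap V_{y_i}\neq\emptyset$, hence $f(U)\subseteq f(U_{y_i})\subseteq E'[y_i]$, and the rest of your entourage bookkeeping goes through (indeed $E'\circ E'\subseteq E$ already suffices there). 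The reduction of the compact Hausdorff biconditional to the Tychonoff implication plus the covering argument is exactly right.
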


If $(X,f)$ is a dynamical system, we will denote the set of equicontinuity pairs by $\EqP(X,f)$. Note that in this case, if we consider the above definitions, we have $Y=X$ and $\mathscr{F}=\{f^n \mid n \in \mathbb{N}\}$. By definition it follows that $(X,f)$ is topologically equicontinuous precisely when $\EqP(X,f) = X \times X$. For $(x,y) \in \EqP(X,f)$, we refer to the condition
\begin{equation}\label{eqnEqCondition}
\forall O\in \mathcal{N}_y \, \exists U \in \mathcal{N}_x \, \exists V \in \mathcal{N}_y : \forall n \in \mathbb{N}, f^n(U) \cap V \neq \emptyset \implies f^n(U) \subseteq O,
\end{equation}
as the \textit{topological equicontinuity condition} for $x$ and $y$. We say that $U$ and $V$, as in Equation \ref{eqnEqCondition}, satisfy the topological equicontinuity condition for $x$, $y$ and $O$.



The following simple observation relies solely on continuity and will be useful throughout what follows.

\begin{lemma}\label{lemmaS}
Let $(X,f)$ be a dynamical system, where $X$ is a Hausdorff space. Let $x,y \in X$ and $n \in \mathbb{N}$. Pick $O \in \mathcal{N}_y$ and let
\[S=\{k \in \{1,\ldots, n \} \mid f^k(x)=y\}.\]
There exist neighbourhoods $U \in \mathcal{N}_x$ and $V \in \mathcal{N}_y$ such that $N(U,V) \cap\{1, \ldots, n\}=S$
and $f^k(U) \subseteq V \subseteq O$ for all $k \in S$.
\end{lemma}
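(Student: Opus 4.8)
The plan is to prove this by a finite induction on $n$, since we only need to control the behaviour of $f^1, \ldots, f^n$ at $x$, and continuity of each $f^k$ gives us local control. The set $S$ records precisely which of the first $n$ iterates of $x$ land exactly on $y$; for indices $k \notin S$ we have $f^k(x) \neq y$, and since $X$ is Hausdorff we can separate $f^k(x)$ from $y$ by disjoint open sets, which will let us shrink $U$ so that $f^k(U)$ misses a fixed neighbourhood of $y$ (forcing $k \notin N(U,V)$); for indices $k \in S$ we have $f^k(x) = y \in O$, and continuity of $f^k$ lets us shrink $U$ so that $f^k(U)$ is contained in any prescribed neighbourhood of $y$.

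Concretely, first I would shrink $O$ if necessary so that we may work with a fixed open $V_0 \in \mathcal{N}_y$ with $V_0 \subseteq O$; in fact it is cleanest to build $V$ and $U$ simultaneously. For each $k \in \{1, \ldots, n\} \setminus S$, using Hausdorffness pick disjoint open sets $A_k \ni f^k(x)$ and $B_k \ni y$. Set $V \coloneqq O \cap \bigcap_{k \notin S} B_k$, which is an open neighbourhood of $y$ (a finite intersection). Now for $k \notin S$, continuity of $f^k$ at $x$ gives a neighbourhood $U_k \in \mathcal{N}_x$ with $f^k(U_k) \subseteq A_k$, hence $f^k(U_k) \cap V \subseteq A_k \cap B_k = \emptyset$. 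For $k \in S$, continuity of $f^k$ at $x$ (with $f^k(x) = y$) gives $U_k \in \mathcal{N}_x$ with $f^k(U_k) \subseteq V$. Finally put $U \coloneqq \bigcap_{k=1}^n U_k$, again a finite intersection and hence a neighbourhood of $x$.

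With these choices, for $k \in S$ we get $f^k(U) \subseteq f^k(U_k) \subseteq V \subseteq O$, so $k \in N(U,V)$ and the containment $f^k(U) \subseteq V \subseteq O$ holds; and for $k \notin S$ we get $f^k(U) \cap V \subseteq f^k(U_k) \cap V = \emptyset$, so $k \notin N(U,V)$. Therefore $N(U,V) \cap \{1,\ldots,n\} = S$ exactly, as required. There is no real obstacle here: the only points to be slightly careful about are that all the intersections taken are finite (so they remain neighbourhoods), that the case $S = \emptyset$ and the case $S = \{1,\ldots,n\}$ are both handled uniformly by the construction (an empty intersection being the whole space, or simply $V = O$, $U = \bigcap_{k\in S}U_k$), and that we do not need $U$ or $V$ to be open — the definition of $\mathcal{N}_x$ in the paper does not require neighbourhoods to be open, though here they can be taken open if desired. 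The Hausdorff hypothesis is used exactly once, to separate $f^k(x)$ from $y$ when they differ, which is the single place the statement could fail without it.
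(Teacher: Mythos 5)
Your construction is correct and is essentially the same as the paper's: separate $f^k(x)$ from $y$ by Hausdorffness for $k\notin S$, intersect the $y$-side sets with $O$ to get $V$, and pull everything back through the finitely many iterates (your $U_k$'s are just neighbourhoods inside the paper's $f^{-k}(A_k)$ and $f^{-k}(V)$) to get $U$. The stated "induction on $n$" plan is not actually used --- what you carry out is the same direct finite construction as in the paper, and it is fine.
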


\begin{proof}
Let $S=\{ k \in \{1,\ldots,n\} \mid f^k(x)=y\}$ (this set may be empty). For all $i \in \{1, \ldots, n\}\setminus S$, let $U_i \in \mathcal{N}_{f^i(x)}$ and $V_i \in \mathcal{N}_y$ be such that $U_i \cap V_i =\emptyset$. Define
\[V\coloneqq \left(\bigcap _{i \in \{1, \ldots, n\}\setminus S} V_i \right) \cap O.\]
Then $V \in \mathcal{N}_y$. Now take 
\[U \coloneqq \left(\bigcap _{i\in  \{1, \ldots, n\}\setminus S} f^{-i}\left(U_i\right)\right) \cap \left(\bigcap _{i\in  S} f^{-i}\left(V\right) \right).\]
Notice $U \in \mathcal{N}_x$.

By construction, $N(U, V)\cap\{1, \ldots, n\}=S$ and $f^k(U) \subseteq V \subseteq O$ for all $k \in S$.
\end{proof}
In particular Lemma \ref{lemmaS} shows that any pair $(x,y) \in X \times X$, satisfy the following weakened version of the topological equicontinuity condition (Equation \ref{eqnEqCondition}).

\begin{corollary}\label{corFiniteTopEqCondition}
Let $(X,f)$ be a dynamical system, where $X$ is a Hausdorff space. Let $x,y \in X$ and $n \in \mathbb{N}$. Then for any $O \in \mathcal{N}_y$ there exist $U \in \mathcal{N}_x$ and $V \in \mathcal{N}_y$ such that, for any $k \in \{1,\ldots, n\}$, \[f^k(U) \cap V \neq \emptyset \implies f^k(U) \subseteq O.\]
\end{corollary}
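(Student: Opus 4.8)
The plan is to derive Corollary \ref{corFiniteTopEqCondition} as a direct consequence of Lemma \ref{lemmaS}. The key observation is that Lemma \ref{lemmaS} already produces, for given $x$, $y$, $n$ and $O \in \mathcal{N}_y$, neighbourhoods $U \in \mathcal{N}_x$ and $V \in \mathcal{N}_y$ with the stronger property that $N(U,V) \cap \{1, \ldots, n\} = S$, where $S = \{k \in \{1, \ldots, n\} \mid f^k(x) = y\}$, and moreover $f^k(U) \subseteq V \subseteq O$ for all $k \in S$. So I would simply invoke Lemma \ref{lemmaS} to obtain such $U$ and $V$, and then check the implication in the statement of the corollary for an arbitrary $k \in \{1, \ldots, n\}$.

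The verification splits into two cases. First, suppose $k \notin S$. Then $k \notin N(U,V) \cap \{1, \ldots, n\}$, which means $f^k(U) \cap V = \emptyset$; hence the hypothesis $f^k(U) \cap V \neq \emptyset$ of the implication is vacuously false, and the implication holds trivially. Second, suppose $k \in S$. Then by the conclusion of Lemma \ref{lemmaS} we have $f^k(U) \subseteq V \subseteq O$, so $f^k(U) \subseteq O$ holds unconditionally, and in particular the implication $f^k(U) \cap V \neq \emptyset \implies f^k(U) \subseteq O$ holds. In both cases the required implication is satisfied, which completes the argument.

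There is essentially no obstacle here: the corollary is a bookkeeping restatement of Lemma \ref{lemmaS} that trades the exact description of the hitting set $S$ for the weaker ``local topological equicontinuity'' implication, which is precisely the finite-stage analogue of the topological equicontinuity condition in Equation \ref{eqnEqCondition}. The only point worth stating carefully is the case division on whether $k \in S$, to make clear why the implication is vacuous in one case and unconditional in the other. I would write the proof in two or three sentences along exactly these lines.

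<br>

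\begin{proof}
Apply Lemma \ref{lemmaS} to obtain $U \in \mathcal{N}_x$ and $V \in \mathcal{N}_y$ with $N(U,V) \cap \{1, \ldots, n\} = S$ and $f^k(U) \subseteq V \subseteq O$ for all $k \in S$, where $S = \{k \in \{1, \ldots, n\} \mid f^k(x) = y\}$. Let $k \in \{1, \ldots, n\}$. If $k \notin S$, then $k \notin N(U,V)$, so $f^k(U) \cap V = \emptyset$ and the implication holds vacuously. If $k \in S$, then $f^k(U) \subseteq V \subseteq O$, so $f^k(U) \subseteq O$ holds regardless of whether $f^k(U) \cap V \neq \emptyset$. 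In either case the stated implication holds.
\end{proof}
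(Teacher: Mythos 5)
Your proof is correct and follows the same route as the paper, which simply records the corollary as immediate from Lemma \ref{lemmaS}; your case split on $k \in S$ versus $k \notin S$ just makes the ``immediate'' explicit.
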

\begin{proof}
Immediate from Lemma \ref{lemmaS}.
\end{proof}

If $(X,f)$ is a Hausdorff dynamical system and the points $x, y \in X$ are such that there exist neighbourhoods $U \in \mathcal{N}_x$ and $V \in \mathcal{N}_y$ such that, for all $n\in \mathbb{N}$, $f^n(U) \cap V = \emptyset$ then $(x,y) \in \EqP(X,f)$; this is vacuously true. The following result adds to this.

\begin{proposition}\label{prop1.0}
Let $(X,f)$ be a dynamical system where $X$ is Hausdorff  space. Let $x,y \in X$ and suppose that $y \notin \Omega(x)$. Then $(x,y) \in \EqP(X,f)$.

\end{proposition}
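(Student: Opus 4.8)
The plan is to unpack the definition of $\Omega(x)$ and extract from $y\notin\Omega(x)$ precisely the neighbourhoods needed to build the equicontinuity pair, then combine these with a finite-stage argument handled by Lemma~\ref{lemmaS} (or Corollary~\ref{corFiniteTopEqCondition}). Recall $y\in\Omega(x)$ means: for every $V\in\mathcal N_y$, every $U\in\mathcal N_x$ and every $N\in\mathbb N$ there is $n>N$ with $f^n(U)\cap V\neq\emptyset$. Negating this, since $y\notin\Omega(x)$ there exist $U_0\in\mathcal N_x$, $V_0\in\mathcal N_y$ and $N_0\in\mathbb N$ such that $f^n(U_0)\cap V_0=\emptyset$ for all $n>N_0$. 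So the ``tail'' of the orbit behaviour is already vacuous; only the finitely many indices $n\in\{1,\dots,N_0\}$ remain to be controlled.

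The key steps, in order, are as follows. First, fix an arbitrary $O\in\mathcal N_y$; without loss of generality we may take $O\subseteq V_0$ (replace $O$ by $O\cap V_0$). Second, apply Lemma~\ref{lemmaS} with $n=N_0$ to obtain neighbourhoods $U_1\in\mathcal N_x$ and $V_1\in\mathcal N_y$ with $N(U_1,V_1)\cap\{1,\dots,N_0\}=S$, where $S=\{k\le N_0\mid f^k(x)=y\}$, and with $f^k(U_1)\subseteq V_1\subseteq O$ for all $k\in S$. Third, set $U\coloneqq U_0\cap U_1\in\mathcal N_x$ and $V\coloneqq V_0\cap V_1\in\mathcal N_y$, and verify the topological equicontinuity condition~(\ref{eqnEqCondition}) for $x$, $y$ and $O$: suppose $f^n(U)\cap V\neq\emptyset$ for some $n\in\mathbb N$. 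If $n>N_0$, then since $U\subseteq U_0$ and $V\subseteq V_0$ we would have $f^n(U_0)\cap V_0\neq\emptyset$, contradicting the choice of $U_0,V_0,N_0$; so $n\le N_0$. Then $f^n(U)\cap V\neq\emptyset$ with $U\subseteq U_1$, $V\subseteq V_1$ forces $n\in N(U_1,V_1)\cap\{1,\dots,N_0\}=S$, hence $f^n(U)\subseteq f^n(U_1)\subseteq V_1\subseteq O$, as required. This shows $(x,y)\in\EqP(X,f)$.

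I do not expect a genuine obstacle here: the statement is essentially a bookkeeping exercise separating the infinite tail (killed outright by $y\notin\Omega(x)$) from the finite initial segment (handled by Lemma~\ref{lemmaS}). The one point deserving care is the case $S\neq\emptyset$, i.e.\ when $f^k(x)=y$ for some $k\le N_0$ — here one cannot separate $f^k(x)$ from $y$ by disjoint neighbourhoods, which is exactly why Lemma~\ref{lemmaS} is phrased to give $f^k(U)\subseteq V\subseteq O$ on $S$ rather than emptiness; feeding that containment through shows the implication in~(\ref{eqnEqCondition}) holds (trivially, since the conclusion $f^n(U)\subseteq O$ is met) rather than being vacuous. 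A secondary subtlety is remembering that the index $n$ in~(\ref{eqnEqCondition}) ranges over all of $\mathbb N$, so one genuinely must rule out $n>N_0$ using $U_0,V_0$; the intersection $U=U_0\cap U_1$, $V=V_0\cap V_1$ is designed precisely to cover both regimes simultaneously.
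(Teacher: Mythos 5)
Your proof is correct and follows essentially the same route as the paper: negate $y\in\Omega(x)$ to kill all indices beyond some $N_0$, then control the finitely many remaining indices via Lemma~\ref{lemmaS} (the paper invokes its immediate restatement, Corollary~\ref{corFiniteTopEqCondition}) and intersect the two pairs of neighbourhoods. The only cosmetic difference is that you use the full strength of Lemma~\ref{lemmaS} (the exact hitting set $S$) where the paper only needs the implication form, and your WLOG $O\subseteq V_0$ is harmless but unnecessary.
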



\begin{proof}
Let $O \in \mathcal{N}_y$. Take $U\in \mathcal{N}_x$, $V\in \mathcal{N}_y$ and $N \in \mathbb{N}$ such that $f^n(U) \cap V =\emptyset$ for all $n>N$. 
By Corollary \ref{corFiniteTopEqCondition}, there exist $U^\prime$ and $V^\prime$ such that, for any $k \in \{1,\ldots, N\}$, if $f^k(U^\prime) \cap V^\prime \neq \emptyset$ then $f^k(U^\prime) \subseteq O$; without loss of generality $U^\prime \subseteq U$ and $V^\prime \subseteq V \cap O$. Then, since $f^n(U^\prime) \cap V^\prime = \emptyset$ for all $n>N$, $U^\prime$ and $V^\prime$ satisfy the topological equicontinuity condition for $x$, $y$ and $O$. As $O \in \mathcal{N}_y$ was picked arbitrarily the result follows.
\end{proof}

With this in mind we make the following definition.

\begin{definition}
If $(X,f)$ is a dynamical system, where $X$ is a Hausdorff space. We say $(x,y) \in X \times X$ is a trivial equicontinuity pair if $y \notin \Omega(x)$.
\end{definition}

    \begin{remark} Proposition \ref{prop1.0} tells us that a trivial equicontinuity pair is indeed an equicontinuity pair.
    \end{remark}

Generally, in a non-compact Tychonoff space, topologically equicontinuity, whilst clearly necessary for equicontinuity (Theorem \ref{EquivalencesOfTopEqAndEq}), is not sufficient; it is a strictly weaker property than equicontinuity. Example \ref{ex1.1} shows this. First, recall that a metric system $(X,f)$ is said to be \textit{expansive} if there exists $\delta>0$ such that for any $x$ and $y$, with $x \neq y$, there exists $k \in {\mathbb{N}_0}$ such that $d(f^k(x),f^k(y))\geq \delta$. It is easy to see that if $X$ is perfect (i.e. without isolated points) then expansivity implies sensitivity.

\begin{example}\label{ex1.1}
Consider the dynamical system $(X=\mathbb{R}\setminus\{0\}, f)$, where $f(x)=2x$. Using Proposition \ref{prop1.0} it can be verified that $\EqP(X,f)=X\times X$, hence the system is topologically equicontinuous. However this system is not only sensitive but it is also expansive. Each of these properties (the latter, since $X$ is perfect) are mutually exclusive with the existence of an equicontinuity point, thus $x \notin \Eq(X,f)$ for any $x \in X$. 
\end{example}

\begin{lemma}\label{lemma1.0}
Let $(X,f)$ be a dynamical system, where $X$ is a Hausdorff space. If $(x,y) \in \EqP(X,f)$ then either they are a trivial equicontinuity pair or $y \in \omega(x)$.
\end{lemma}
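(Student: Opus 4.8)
The plan is to prove the equivalent positive statement: if $(x,y) \in \EqP(X,f)$ is \emph{not} a trivial equicontinuity pair, i.e.\ $y \in \Omega(x)$, then $y \in \omega(x)$. To do this I would verify directly the neighbourhood characterisation of $\omega$-limit points recalled in the preliminaries, namely that for every $O \in \mathcal{N}_y$ and every $N \in \mathbb{N}$ there exists $n > N$ with $f^n(x) \in O$.

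First I would fix an arbitrary $O \in \mathcal{N}_y$ and an arbitrary $N \in \mathbb{N}$. Applying the topological equicontinuity condition (Equation \ref{eqnEqCondition}) for $x$, $y$ and this $O$, I obtain neighbourhoods $U \in \mathcal{N}_x$ and $V \in \mathcal{N}_y$ such that, for every $n \in \mathbb{N}$, $f^n(U) \cap V \neq \emptyset$ implies $f^n(U) \subseteq O$. Now I would feed these very witnesses $U$, $V$, together with $N$, into the definition of $y \in \Omega(x)$: since the pair is not trivial, $y \in \Omega(x)$, so there is some $n > N$ with $f^n(U) \cap V \neq \emptyset$. Hence $f^n(U) \subseteq O$, and because $x \in U$ this yields $f^n(x) \in O$ with $n > N$. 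As $O$ and $N$ were arbitrary, $y \in \omega(x)$.

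The argument is just a careful matching of the quantifiers appearing in the definitions of $\EqP$, $\Omega$ and $\omega$, so I do not expect a genuine obstacle. The only point requiring attention is the order in which the data are exchanged: the equicontinuity condition \emph{produces} the neighbourhoods $U$ and $V$, and these are exactly the inputs that the non-wandering-type condition $y \in \Omega(x)$ \emph{consumes} in order to return a hitting time $n > N$ — which is precisely why the hypothesis needs to be $y \in \Omega(x)$ and not some weaker recurrence-type condition.
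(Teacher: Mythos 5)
Your proof is correct and is essentially the paper's argument run in the direct rather than the contradiction direction: the paper assumes $y \notin \omega(x)$ and contradicts the equicontinuity condition, whereas you feed the $U$ and $V$ produced by that condition straight into the definition of $y \in \Omega(x)$ to obtain $n>N$ with $f^n(x) \in O$. The quantifier-matching is identical, so this is the same proof in contrapositive form.
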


\begin{proof} Suppose $(x,y)$ is a non-trivial equicontinuity pair (otherwise we are done). Now suppose $y\notin \omega(x)$; then there exists $O \in \mathcal{N}_y$ and $N \in \mathbb{N}$ such that for all $n>N$ we have $f^n(x) \notin O$. Since $(x,y)$ are a nontrivial pair, for any neighbourhoods $U$ and $V$ of $x$ and $y$ respectively, the set $N(U,V)$ is infinite. Pick $U \in \mathcal{N}_x$ and $V \in \mathcal{N}_y$ and let $n>N$ be such that $f^n(U) \cap V \neq \emptyset$. Then, as $f^n(x) \notin O$, $f^n(U) \not\subseteq O$. As $U$ and $V$ were arbitrary neighbourhoods this contradicts the fact that $(x,y) \in \EqP(X,f)$.
\end{proof}

This means that a pair $(x,y)$ is a non-trivial equicontinuity pair if and only if it is an equicontinuity pair and $y \in \omega(x)$.


The statement $(x,y) \notin \EqP(X,f)$, for $x,y \in X$, means precisely
\begin{equation}\label{eqn1.1}
    \exists O \in \mathcal{N}_y : \forall U \in \mathcal{N}_x \, \forall V \in \mathcal{N}_y \, \exists n \in \mathbb{N} : f^n(U) \cap V \neq \emptyset \text{ and } f^n(U) \not\subseteq O.
\end{equation}
In particular, for any pair of neighbourhoods $U\in \mathcal{N}_x$ and $V\in \mathcal{N}_y$, we have that $U$ meets $V$ after some number of iterations of $f$. If $N(U, V)$ were finite, for some such pair, then $(x,y) \in \EqP(X,f)$ by Proposition \ref{prop1.0} (it would be a trivial equicontinuity pair), thus $N(U,V)$ is infinite. By definition this means that $y \in \Omega(x)$. (NB. We shall refer to a neighbourhood such as $O$ in Equation (\ref{eqn1.1}) as a \textit{splitting neighbourhood} of $y$ with regard to $x$.) This leads us to the following generalisation of Lemma \ref{lemma1Omega(x)omega(x)}.

\begin{lemma}\label{lemma2Omega(x)omega(x)}
Let $(X,f)$ be a dynamical system where $X$ is a Hausdorff space. If $(X,f)$ is topologically equicontinuous at $x \in X$ then $\omega(x)=\Omega(x)$.
\end{lemma}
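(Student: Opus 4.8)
The plan is to prove the two inclusions $\omega(x)\subseteq\Omega(x)$ and $\Omega(x)\subseteq\omega(x)$ separately, the first of which is already noted in the preliminaries (it holds for every $x$, with no hypothesis on the system), so that the real content is the reverse inclusion $\Omega(x)\subseteq\omega(x)$.

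For that inclusion I would fix an arbitrary $y\in\Omega(x)$ and aim to show $y\in\omega(x)$. Since $(X,f)$ is topologically equicontinuous at the point $x$, by the definition of that notion we have $(x,z)\in\EqP(X,f)$ for \emph{every} $z\in X$; in particular $(x,y)\in\EqP(X,f)$. Now I would apply Lemma \ref{lemma1.0} to this equicontinuity pair: it tells us that either $(x,y)$ is a trivial equicontinuity pair or $y\in\omega(x)$. The first alternative, by definition of a trivial equicontinuity pair, means precisely that $y\notin\Omega(x)$, which contradicts the choice of $y$. Hence $y\in\omega(x)$, and since $y\in\Omega(x)$ was arbitrary we conclude $\Omega(x)\subseteq\omega(x)$.

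Combining this with $\omega(x)\subseteq\Omega(x)$ gives $\omega(x)=\Omega(x)$, as required. There is no serious obstacle here: the argument is essentially a direct consequence of Lemma \ref{lemma1.0} together with the definition of topological equicontinuity at a point. The only point to be careful about is to remember that "trivial equicontinuity pair" is phrased in terms of the non-wandering set $\Omega(x)$ (not $\omega(x)$), so that the hypothesis $y\in\Omega(x)$ is exactly what rules out triviality. One could equally well phrase the proof by contraposition of the splitting-neighbourhood remark preceding the lemma, but routing it through Lemma \ref{lemma1.0} is cleanest, and it makes transparent that this statement genuinely generalises Lemma \ref{lemma1Omega(x)omega(x)} from equicontinuity points to points of topological equicontinuity.
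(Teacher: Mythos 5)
Your proof is correct and follows essentially the same route as the paper: the paper also notes $\omega(x)\subseteq\Omega(x)$, takes $y\in\Omega(x)$, observes that topological equicontinuity at $x$ makes $(x,y)$ a (necessarily non-trivial) equicontinuity pair, and concludes $y\in\omega(x)$ via Lemma \ref{lemma1.0}. No issues.
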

\begin{proof}
Pick $y \in X$ arbitrarily; note that $(x,y) \in \EqP(X,f)$ by hypothesis. Since $\omega(x) \subseteq \Omega(x)$ if suffices to consider the case when $y \in \Omega(x)$. In this case we have $(x,y)$ is a non-trivial equicontinuity pair. Hence $y \in \omega(x)$ by Lemma \ref{lemma1.0}.
\end{proof}

We are now in a position to characterise transitive dynamical systems on Hausdorff spaces purely with reference to equicontinuity pairs.

\begin{theorem}\label{ChT} Let $X$ be a Hausdorff space, and let $f \colon X \to X$ be a continuous function. Then $(X,f)$ is a transitive dynamical system if and only if there are no trivial equicontinuity pairs.
\end{theorem}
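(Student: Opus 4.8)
The plan is to reduce the statement to the observation that, directly from the definition of a trivial equicontinuity pair, ``there are no trivial equicontinuity pairs'' says precisely that there is no pair $(x,y) \in X \times X$ with $y \notin \Omega(x)$; equivalently, $\Omega(x) = X$ for every $x \in X$. Once this reformulation is in place, both implications are short, and each matches one half of an already-available fact.

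First I would treat the forward direction. If $(X,f)$ is transitive then, by Remark \ref{remarkOmegaTrans} (a consequence of Lemma \ref{lemmaEq0.1}), $\Omega(x) = X$ for every $x \in X$. Hence there is no pair $(x,y)$ with $y \notin \Omega(x)$, i.e.\ there are no trivial equicontinuity pairs.

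For the converse, suppose there are no trivial equicontinuity pairs, so that $\Omega(x) = X$ for all $x \in X$. Let $U$ and $V$ be nonempty open subsets of $X$, and choose $x \in U$ and $y \in V$; then $U \in \mathcal{N}_x$ and $V \in \mathcal{N}_y$. Since $y \in X = \Omega(x)$, the definition of the non-wandering set (applied, say, with $N = 1$) produces some $n \in \mathbb{N}$ with $f^n(U) \cap V \neq \emptyset$, so $N(U,V) \neq \emptyset$. As $U$ and $V$ were arbitrary nonempty open sets, $(X,f)$ is transitive.

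The only thing requiring any care is the definitional bookkeeping: unwinding ``trivial equicontinuity pair'' into the condition $y \notin \Omega(x)$, recalling that an open set containing a point is a neighbourhood of that point, and invoking Remark \ref{remarkOmegaTrans} in the forward direction. There is no genuine obstacle here; the substance of the theorem is already packaged in the earlier development (the definition of $\Omega$, Proposition \ref{prop1.0}, and Lemma \ref{lemmaEq0.1}), and this result is essentially the clean restatement of transitivity in the language of equicontinuity pairs.
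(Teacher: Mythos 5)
Your proof is correct, but the converse is argued differently from the paper. In the forward direction you and the paper do the same thing (transitivity gives $N(U,V)$ infinite for all neighbourhoods, i.e.\ $\Omega(x)=X$ for all $x$, via Lemma \ref{lemmaEq0.1}/Remark \ref{remarkOmegaTrans}). For the converse, the paper picks $x\in U$, $y\in V$ and splits into cases according to whether $(x,y)\in\EqP(X,f)$: if it is, Lemma \ref{lemma1.0} gives $y\in\omega(x)$ and hence $N(U,V)\neq\emptyset$; if it is not, the negation formula (Equation \ref{eqn1.1}) directly supplies an $n\in N(U,V)$. You instead read the definition of a trivial equicontinuity pair literally as ``$y\notin\Omega(x)$'', so that the hypothesis becomes $\Omega(x)=X$ for every $x$, and then extract $N(U,V)\neq\emptyset$ straight from the definition of the non-wandering set. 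Your route is shorter and makes transparent that the theorem is essentially the statement ``transitive $\iff$ $\Omega(x)=X$ for all $x$''; the paper's case split has the advantage of staying entirely inside the equicontinuity-pair machinery and of remaining valid under the stricter reading of ``trivial equicontinuity pair'' as an element of $\EqP(X,f)$ with $y\notin\Omega(x)$ --- under that reading your reformulation additionally needs Proposition \ref{prop1.0} (which you do cite as background) to identify ``no trivial pairs'' with ``$\Omega(x)=X$ for all $x$''. Since the paper's definition is stated unconditionally, your argument is complete as written.
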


\begin{proof} Suppose first that $(X,f)$ is transitive. Let $(x,y) \in X \times X$ be given and let $U \in \mathcal{N}_x$ and $V \in \mathcal{N}_y$. By transitivity, $N(U,V)$ is infinite (see Lemma \ref{lemmaEq0.1}). Since $U$ and $V$ were arbitrary neighbourhoods it follows that $(x,y)$ is not a trivial equicontinuity pair.

Now suppose $(X,f)$ has no trivial equicontinuity pairs and let $U$ and $V$ be nonempty open sets. Pick $x \in U$ and $y \in V$; $(x,y)$ is a not a trivial equicontinuity pair. If $(x,y) \in \EqP(X,f)$ then, by Lemma \ref{lemma1.0}, $y \in \omega(x)$ from which is follows that $N(U,V) \neq \emptyset$. If $(x,y) \notin \EqP(X,f)$ then by Equation (\ref{eqn1.1}) there exists $n \in N(U,V)$. In every case, $N(U,V) \neq \emptyset$ and we have transitivity.
\end{proof}

The following corollary is a direct consequence of putting Lemma \ref{lemma1.0} and Theorem \ref{ChT} together.

\begin{corollary}\label{Cor1.Eq} Let $X$ be a Hausdorff space and $(X,f)$ be a transitive dynamical system. If $(x,y) \in \EqP(X,f)$ then $y \in \omega(x)$.
\end{corollary}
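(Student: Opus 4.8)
The plan is simply to combine Theorem \ref{ChT} with Lemma \ref{lemma1.0}; no genuinely new argument is needed, which is why the statement is phrased as a corollary. First I would invoke Theorem \ref{ChT}: since $(X,f)$ is transitive, the system has no trivial equicontinuity pairs, i.e.\ there is no pair $(a,b) \in X \times X$ with $b \notin \Omega(a)$. In particular, the given pair $(x,y) \in \EqP(X,f)$ is not a trivial equicontinuity pair, which by the definition of triviality means precisely that $y \in \Omega(x)$.

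Next I would feed the pair $(x,y)$ into Lemma \ref{lemma1.0}, which says that whenever $(x,y) \in \EqP(X,f)$, either $(x,y)$ is a trivial equicontinuity pair or $y \in \omega(x)$. The previous paragraph has eliminated the first alternative, so the second must hold, giving $y \in \omega(x)$, which is the conclusion of Corollary \ref{Cor1.Eq}.

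Since both ingredients are already proved in the excerpt, there is essentially no obstacle here; the only point requiring a little care is the bookkeeping around the definitions, namely that ``$(x,y)$ is not a trivial equicontinuity pair'' is exactly the negation of ``$y \notin \Omega(x)$'' (hence equivalent to $y \in \Omega(x)$), and that transitivity is precisely the hypothesis under which Theorem \ref{ChT} delivers the absence of trivial pairs. If one preferred to avoid citing Lemma \ref{lemma1.0}, one could instead argue directly: a non-trivial pair has $N(U,V)$ infinite for all $U \in \mathcal{N}_x$ and $V \in \mathcal{N}_y$, and the equicontinuity condition then forces $f^n(x)$ into every neighbourhood of $y$ for infinitely many $n$ (via the splitting-neighbourhood argument used in the proof of Lemma \ref{lemma1.0}), so $y \in \omega(x)$; but reusing Lemma \ref{lemma1.0} and Theorem \ref{ChT} is the cleanest route.
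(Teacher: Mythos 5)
Your proposal is correct and is exactly the paper's route: the paper derives Corollary \ref{Cor1.Eq} as a direct consequence of combining Theorem \ref{ChT} (transitivity rules out trivial equicontinuity pairs) with Lemma \ref{lemma1.0} (an equicontinuity pair is trivial or else $y \in \omega(x)$). Your careful note that ``not a trivial pair'' means precisely $y \in \Omega(x)$ is the only bookkeeping involved, and it matches the paper's intent.
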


We now construct a class of examples which have no isolated points and non-trivial equicontinuity pairs but no points of topological equicontinuity. The information provided on shift spaces in Section \ref{SectionShiftSpaces} will be of relevance here.

\begin{example}\label{ExampleNonTrivEqPairNoEqPt}
Take $\Sigma =\{0,1,2,\ldots,m\}$, where $m\geq 2$. For each $k \in \mathbb{N}$, let $W_k$ represent a word of length $k$ containing only the symbols $\{1, 2, \ldots, m\}$. Let $\mathcal{W}$ be the collection of all sequences of the form:
\[W_10W_20^2W_30^3\ldots0^{n-1}W_n0^n\ldots,\]
Now take \[Y= \mathcal{W} \cup \{0^nx \mid x \in \mathcal{W}, n \in \mathbb{N}\} \cup \{0^\infty\},\]
and let
\[X\coloneqq \overline{\{\sigma^k(y) \mid y \in Y, k \in \mathbb{N}_0\}}.\]
where the closure is taken with regard to the full shift $\Sigma^\omega$. It is worth observing that the $\omega$-limit sets of points in $\mathcal{W}$ are points of the following forms:
\[0^\infty \text{ and } W_k0^\infty.\] 
Notice that, for any $x \in \mathcal{W}$, $n \in \mathbb{N}$ and $k \in {\mathbb{N}_0}$, $\sigma^k(x) \in [0^n]$ if and only if, for all $y \in \mathcal{W}$, $\sigma^k(y) \in [0^n]$. With this observation in mind, we claim that if $x \in \mathcal{W}$, then $(x,0^\infty)\in \EqP(X,f)$. Indeed, pick such an $x$; write $x=W_10W_20^2W_30^3\ldots$. Now let $O \ni 0^\infty$ be open. Let $V=[0^n] \subseteq O$ and take $U=[W_10W_2]$. If $y\in U$ then $y \in \mathcal{W}$ 
by construction. But by our observation, if $\sigma^k(y) \in V$ then $\sigma^k\left(\mathcal{W}\right) \subseteq V \subseteq O$. Hence $(x,0^\infty)\in \EqP(X,f)$.

It remains to observe that $\Eq(X,f)=\emptyset$, because shift systems, with no isolated points, are sensitive. By Theorem \ref{EquivalencesOfTopEqAndEq}, this means there are no points of topological equicontinuity.
\end{example} 
Example \ref{ExampleNonTrivEqPairNoEqPt} demonstrates that, even in a compact metric setting, a point may have non-trivial equicontinuity partners but not be a point of equicontinuity.


We will now build up some results relating to equicontinuity pairs in dynamical systems, this will culminate in a generalisation of the Auslander-Yorke Dichotomy.

\begin{lemma}\label{lemmaEqPOpenAt}
Let $(X,f)$ be a dynamical system, where $X$ is a Hausdorff space. Let $x,y \in X$. If $(x,y) \in \EqP(X,f)$, $f$ is open at $y$ and there is a neighbourhood base for $y$, $\mathcal{B}_y\subseteq \mathcal{N}_y$, such that $f^{-1}(f(O))=O$ for all $O \in \mathcal{B}_y$, then $(x,f(y)) \in \EqP(X,f)$.
\end{lemma}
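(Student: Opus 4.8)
The plan is to verify the topological equicontinuity condition (Equation \ref{eqnEqCondition}) for the pair $(x,f(y))$ directly, using the condition for $(x,y)$ together with an index shift. Fix an arbitrary open $O' \in \mathcal{N}_{f(y)}$. Since $f$ is continuous and $f(y) \in O'$, the set $O \coloneqq f^{-1}(O')$ is an open neighbourhood of $y$ satisfying $f(O) \subseteq O'$. Applying the hypothesis $(x,y) \in \EqP(X,f)$ to this $O$ yields $U_0 \in \mathcal{N}_x$ and $V_0 \in \mathcal{N}_y$ satisfying the topological equicontinuity condition for $x$, $y$ and $O$. Because shrinking the second neighbourhood only strengthens the implication, I may replace $V_0$ by some $V_1 \in \mathcal{B}_y$ with $V_1 \subseteq V_0$, so that $U_0$ and $V_1$ still satisfy the condition and, crucially, $f^{-1}(f(V_1)) = V_1$ by the hypothesis on $\mathcal{B}_y$.

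The candidate neighbourhoods for the pair $(x,f(y))$ are then $U_0$ and $V' \coloneqq f(V_1)$; here the assumption that $f$ is open at $y$ is exactly what guarantees that $V'$ is a genuine neighbourhood of $f(y)$. The central computation is the index shift: suppose $n \geq 2$ and $f^n(U_0) \cap V' \neq \emptyset$, so that $f^n(u) \in f(V_1)$ for some $u \in U_0$. Writing $f^n(u) = f(f^{n-1}(u))$ and using $f^{-1}(f(V_1)) = V_1$ gives $f^{n-1}(u) \in V_1$, whence $f^{n-1}(U_0) \cap V_1 \neq \emptyset$. Since $n-1 \in \mathbb{N}$, the condition for $(x,y)$ forces $f^{n-1}(U_0) \subseteq O$, and applying $f$ gives $f^n(U_0) \subseteq f(O) \subseteq O'$, as required.

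The one case this misses is $n = 1$, where the shift would land on $f^0$, which the condition for $(x,y)$ does not cover (recall $\mathbb{N}$ excludes $0$). I would dispatch this finite exception separately: by Corollary \ref{corFiniteTopEqCondition} applied to the pair $(x,f(y))$, the target $O'$ and $n=1$, there are $U_2 \in \mathcal{N}_x$ and $V_2 \in \mathcal{N}_{f(y)}$ with $f(U_2) \cap V_2 \neq \emptyset \implies f(U_2) \subseteq O'$. Setting the final neighbourhoods to be $U' \coloneqq U_0 \cap U_2$ and $V'' \coloneqq V' \cap V_2$, the inclusions $U' \subseteq U_0, U_2$ and $V'' \subseteq V', V_2$ let me transfer both arguments: the case $n=1$ follows from the corollary and the cases $n \geq 2$ from the index shift, in each instance using $f^n(U') \subseteq f^n(U_0)$ (respectively $f(U') \subseteq f(U_2)$) to draw the conclusion for $U'$ itself. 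This establishes the condition for all $n \in \mathbb{N}$, so $(x,f(y)) \in \EqP(X,f)$.

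I expect the main obstacle to be bookkeeping rather than conceptual: arranging the saturation hypothesis $f^{-1}(f(V_1)) = V_1$ so that it interacts correctly with the index shift, and noticing that the $n=1$ boundary genuinely requires the separate treatment via Corollary \ref{corFiniteTopEqCondition} rather than a naive shrinking of $U_0$ (which would fail, since $f(x)$ need not lie in $O'$). It is also worth recording that openness at $y$ is used \emph{only} to make $f(V_1)$ a neighbourhood, while the saturation base condition is invoked for the second-coordinate neighbourhood $V_1$, not for the target $O$.
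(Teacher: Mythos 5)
Your proof is correct, and its engine is the same as the paper's: pull $O'$ back through $f$, apply the equicontinuity condition for $(x,y)$, shrink the second neighbourhood into the saturated base $\mathcal{B}_y$, take $f(V_1)$ as the neighbourhood of $f(y)$ (using openness at $y$), and shift the index from $n$ to $n-1$ via $f^{-1}(f(V_1)) = V_1$. The one place you genuinely diverge is the $n=1$ boundary. The paper dispatches it with a Hausdorff case-split: when $x \neq y$ it arranges $U \cap V = \emptyset$, so the $n=1$ hypothesis $f(U) \cap f(V) \neq \emptyset$ (which forces $U \cap V \neq \emptyset$ by saturation) can never occur; when $x = y$ it arranges $U = V \subseteq O'$, so $f(U) \subseteq f(O') \subseteq O$ directly. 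You instead invoke Corollary \ref{corFiniteTopEqCondition} for the pair $(x,f(y))$ with $n=1$ and intersect the resulting neighbourhoods with $U_0$ and $f(V_1)$. Both are valid; your route is more uniform (no case analysis on whether $x=y$, with Hausdorffness entering only through the corollary), whereas the paper's is self-contained at that step but requires checking that its ``without loss of generality'' replacements can be made compatibly with keeping $V \in \mathcal{B}_y$. Your closing observation about why naive shrinking cannot fix $n=1$ (since $f(x)$ need not lie in $O'$) is precisely the obstruction that forces both proofs to treat that case specially.
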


\begin{proof}
Let $O \in \mathcal{N}_{f(y)}$. Then $f^{-1}(O) \in \mathcal{N}_y$. Let $O^\prime \in \mathcal{B}_y$ be such that $f(O^\prime)\subseteq O$. Notice that, since $f$ is open at $y$, $f(O^\prime) \in \mathcal{N}_{f(y)}$. Since $(x,y) \in \EqP(X,f)$ there exist $U \in \mathcal{N}_x$ and $V \in \mathcal{N}_y$ satisfying the topological equicontinuity for $x$, $y$ and $O^\prime$; without loss of generality $V \subseteq O^\prime$ and $V \in \mathcal{B}_y$.  If $x \neq y$ then, without loss of generality, $U \cap V =\emptyset$. If $x = y$ then, without loss of generality $U=V$. Because $f$ is open at $y$, $f(V) \in \mathcal{N}_{f(y)}$. For any $n \in \mathbb{N}$, if $f^n(U) \cap f(V) \neq \emptyset$ then $f^{n-1}(U) \cap f^{-1}(f(V)) \neq \emptyset$. Because $V \in \mathcal{B}_y$ we have $f^{-1}(f(V))=V$, hence $f^{n-1}(U) \cap V \neq \emptyset$. If $n=1$ then it follows that $U=V$ and so $U \subseteq O^\prime$. This itself implies $f(U) \subseteq f(O^\prime) \subseteq O$. If $n>1$ then $f^{n-1}(U) \subseteq  O^\prime$ by topological equicontinuity at $x$ and $y$. This implies $f^n(U) \subseteq f(O^\prime) \subseteq O$. 
\end{proof}

\begin{corollary}
Let $(X,f)$ be a dynamical system, where $X$ is a Hausdorff space. If $f$ is a homeomorphism and $(x,y) \in \EqP(X,f)$ then $(x,f(y)) \in \EqP(X,f)$.
\end{corollary}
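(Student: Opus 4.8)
The plan is to obtain this as an immediate application of Lemma \ref{lemmaEqPOpenAt}, so all that has to be done is to check that a homeomorphism automatically satisfies the two hypotheses appearing there for the point $y$. First I would observe that a homeomorphism is in particular an open map, hence $f$ is open at every point of $X$, and in particular at $y$; this disposes of the openness hypothesis.

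Second, I would use injectivity of $f$: for a bijection one has $f^{-1}(f(A)) = A$ for \emph{every} subset $A \subseteq X$, with no restriction on $A$. Consequently the collection $\mathcal{B}_y := \mathcal{N}_y$ is itself a neighbourhood base at $y$ with the property that $f^{-1}(f(O)) = O$ for all $O \in \mathcal{B}_y$. Thus the second hypothesis of Lemma \ref{lemmaEqPOpenAt} is also met.

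Having verified both conditions for the pair $(x,y) \in \EqP(X,f)$, Lemma \ref{lemmaEqPOpenAt} applies and gives $(x,f(y)) \in \EqP(X,f)$ directly, completing the proof. There is no real obstacle here: the only things to be careful about are the two standard facts that a homeomorphism is open (giving the first hypothesis) and bijective (so that $f^{-1}\circ f$ is the identity on the power set, giving the second); everything else is bookkeeping already carried out in Lemma \ref{lemmaEqPOpenAt}.
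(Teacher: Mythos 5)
Your proposal is correct and follows essentially the same route as the paper, which also deduces the corollary directly from Lemma \ref{lemmaEqPOpenAt} (the paper simply writes ``Immediate from Lemma \ref{lemmaEqPOpenAt}''). You merely make explicit the routine verification the paper leaves implicit: a homeomorphism is open at $y$, and injectivity gives $f^{-1}(f(O))=O$ for every $O\in\mathcal{N}_y$, so $\mathcal{B}_y=\mathcal{N}_y$ serves as the required neighbourhood base.
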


\begin{proof}
Immediate from Lemma \ref{lemmaEqPOpenAt}.
\end{proof}

\begin{lemma}\label{RemarkInf1}  Let $(X,f)$ be a dynamical system, where $X$ is Hausdorff space. Suppose $(x,y) \notin \EqP(X,f)$ and let $O$ be a splitting neighbourhood of $y$ with regard to $x$. Then, for any pair of neighbourhoods $U$ and $V$ of $x$ and $y$ respectively, the set of natural numbers $n$ for which $f^n(U) \cap V \neq \emptyset$ and $f^n(U) \not\subseteq O$ is infinite.
\end{lemma}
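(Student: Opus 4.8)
The plan is to argue by contradiction, reducing to the "finite" version of the topological equicontinuity condition already established in Corollary~\ref{corFiniteTopEqCondition}. Suppose the conclusion fails: there exist $U \in \mathcal{N}_x$ and $V \in \mathcal{N}_y$ such that the set
\[
T \coloneqq \{\, n \in \mathbb{N} \mid f^n(U) \cap V \neq \emptyset \text{ and } f^n(U) \not\subseteq O \,\}
\]
is finite, say $T \subseteq \{1, \ldots, N\}$ for some $N \in \mathbb{N}$.

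The first step is to repair the first $N$ iterates. Applying Corollary~\ref{corFiniteTopEqCondition} to $x$, $y$ and the neighbourhood $O$ yields $U' \in \mathcal{N}_x$ and $V' \in \mathcal{N}_y$ such that, for every $k \in \{1,\ldots,N\}$, $f^k(U') \cap V' \neq \emptyset$ implies $f^k(U') \subseteq O$. Since $\mathcal{N}_x$ and $\mathcal{N}_y$ are closed under finite intersections, we may shrink these so that in addition $U' \subseteq U$ and $V' \subseteq V$.

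The second step is to show that this pair $U', V'$ witnesses that $O$ is \emph{not} a splitting neighbourhood of $y$ with regard to $x$, i.e.\ that there is no $n \in \mathbb{N}$ with $f^n(U') \cap V' \neq \emptyset$ and $f^n(U') \not\subseteq O$. For $n \le N$ this is exactly the property of $U', V'$ from the first step. For $n > N$, suppose such an $n$ existed; then, since $U' \subseteq U$ and $V' \subseteq V$, monotonicity of $f^n$ under inclusion gives $\emptyset \neq f^n(U') \cap V' \subseteq f^n(U) \cap V$ and $f^n(U') \subseteq f^n(U)$, so $f^n(U) \not\subseteq O$ as well; hence $n \in T$, contradicting $n > N$. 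This establishes the claim, which directly contradicts Equation~(\ref{eqn1.1}) — the defining property of a splitting neighbourhood. Therefore $T$ must be infinite.

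I do not expect a genuine obstacle here; the argument is essentially bookkeeping. The only point requiring a little care is the simultaneous shrinking of the neighbourhoods in the first step (so that the finite-time correction from Corollary~\ref{corFiniteTopEqCondition} is compatible with the originally chosen $U$ and $V$) and the use of monotonicity of images to push the contradiction from the tail $n > N$ back into the finite set $T$.
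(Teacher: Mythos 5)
Your proof is correct and follows essentially the same route as the paper's: both argue by contradiction, invoke Corollary~\ref{corFiniteTopEqCondition} to handle the finitely many exceptional times, shrink the resulting neighbourhoods inside $U$ and $V$, and use monotonicity of images together with the defining property of a splitting neighbourhood (Equation~(\ref{eqn1.1})) to reach a contradiction. The only cosmetic difference is that the paper phrases the final contradiction as producing an element $m>N$ of the original exceptional set, whereas you phrase it as contradicting the splitting property directly; these are the same argument.
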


\begin{proof}
The proof is similar to that of Proposition \ref{prop1.0}.

Let $U \in \mathcal{N}_x$ and $V \in \mathcal{N}_y$. Take 
\[A= \{n \in \mathbb{N} \mid f^n(U) \cap V \neq \emptyset \text{ and } f^n(U) \not\subseteq O \}.\]
Suppose that $A$ is finite; note that $A \neq \emptyset$ as $(x,y) \notin \EqP(X,f)$. Let $N$ be the largest element in $A$. By Corollary \ref{corFiniteTopEqCondition}, there exist $U^\prime \in \mathcal{N}_x$ and $V^\prime \in \mathcal{N}_y$ such that, for any $k \in \{1,\ldots, N\}$, if $f^k(U^\prime) \cap V^\prime \neq \emptyset$ then $f^k(U^\prime) \subseteq O$; without loss of generality $U^\prime \subseteq U$ and $V^\prime \subseteq V$. But as $(x,y) \notin \EqP(X,f)$ we have $A^\prime \neq \emptyset$, where
\[A^\prime= \{n \in \mathbb{N} \mid f^n(U^\prime) \cap V^\prime \neq \emptyset \text{ and } f^n(U^\prime) \not\subseteq O \}.\]
Thus there exists $m>N$ with $m \in A^\prime \subseteq A$.
\end{proof}

\begin{lemma}\label{OrbNonEqPair}
Let $(X,f)$ be a dynamical system, where $X$ is Hausdorff space. Let $x,y,z \in X$ and let $z \in \overline{\Orb(x)}$. If $(x,y) \notin \EqP(X,f)$ and $O$ is a splitting neighbourhood of $y$ with regard to $x$ then $(z,y)\notin \EqP(X,f)$ and $O$ is a splitting neighbourhood of $y$ with regard to $z$.
\end{lemma}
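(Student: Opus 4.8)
The plan is to verify directly that $O$ is a splitting neighbourhood of $y$ with regard to $z$; by the characterisation of non-equicontinuity pairs in Equation (\ref{eqn1.1}), establishing this splitting condition simultaneously yields $(z,y)\notin\EqP(X,f)$. Since $O$ is already a splitting neighbourhood of $y$ with regard to $x$ we automatically have $O\in\mathcal{N}_y$, so the only thing to check is: for arbitrary $U'\in\mathcal{N}_z$ and $V\in\mathcal{N}_y$, there is some $m\in\mathbb{N}$ with $f^m(U')\cap V\neq\emptyset$ and $f^m(U')\not\subseteq O$.

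The first step is to transport the neighbourhood $U'$ of $z$ back to a neighbourhood of $x$. Using $z\in\overline{\Orb(x)}$, the interior of $U'$ is an open set meeting $\Orb(x)$, so there is $k\in\mathbb{N}_0$ with $f^k(x)\in U'$; hence $U'\in\mathcal{N}_{f^k(x)}$, and continuity of $f^k$ supplies $U\in\mathcal{N}_x$ with $f^k(U)\subseteq U'$. (This handles uniformly the cases $z\in\Orb(x)$ and $z\in\omega(x)$.) The second step is to feed $U$, $V$ and the splitting neighbourhood $O$ into Lemma \ref{RemarkInf1} applied to the pair $(x,y)$: the set of $n\in\mathbb{N}$ with $f^n(U)\cap V\neq\emptyset$ and $f^n(U)\not\subseteq O$ is infinite, so I may pick such an $n$ with $n>k$ and set $m=n-k\in\mathbb{N}$. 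The final step is the bookkeeping: since $f^m(U')\supseteq f^m(f^k(U))=f^n(U)$, the relations $f^n(U)\cap V\neq\emptyset$ and $f^n(U)\not\subseteq O$ immediately force $f^m(U')\cap V\neq\emptyset$ and $f^m(U')\not\subseteq O$. As $U'$ and $V$ were arbitrary, $O$ is a splitting neighbourhood of $y$ with regard to $z$, which is what we wanted.

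I do not expect a genuine obstacle here; the argument is of the same flavour as those of Proposition \ref{prop1.0} and Lemma \ref{RemarkInf1}. The two small points that need care are: one must not assume that a neighbourhood of $z$ is a neighbourhood of the chosen orbit point of $x$ — this is why I pass through the interior of $U'$ to land on $f^k(x)$ and invoke continuity of $f^k$; and one must genuinely use the \emph{infinitude} provided by Lemma \ref{RemarkInf1}, not just its non-emptiness, in order to select the hitting time $n$ strictly larger than $k$, so that $m=n-k$ is a positive integer.
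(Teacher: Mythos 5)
Your argument is correct and is essentially the paper's own proof: the paper likewise pulls the neighbourhood of $z$ back along the orbit (taking $W=f^{-n}(U)\ni x$), invokes Lemma \ref{RemarkInf1} to choose a hitting time $m>n$ with $f^m(W)\cap V\neq\emptyset$ and $f^m(W)\not\subseteq O$, and leaves the final shift-by-$n$ bookkeeping implicit. Your write-up just makes explicit the two care points (landing the orbit point in the interior of $U'$, and using infinitude rather than mere non-emptiness), which the paper glosses over.
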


\begin{proof}
Let $U \in \mathcal{N}_{z}$ and $V \in \mathcal{N}_y$. Let $n \in \mathbb{N}$ be such that $W=f^{-n}(U) \ni x$. Take $m>n$ such that $f^m(W) \cap V \neq \emptyset$ and $f^m(W) \not \subseteq O$; such an $m$ exists by Lemma \ref{RemarkInf1}.
\end{proof}

\begin{remark}\label{RemarkOrbEqPair}
The contrapositive of Lemma \ref{OrbNonEqPair} is: If $(y,z) \in \EqP(X,f)$ and $y \in \overline{\Orb(x)}$ then $(x, z)\in \EqP(X,f)$.
\end{remark}

\begin{corollary}\label{CorNonTrivEqPairIsTrans}
Let $(X,f)$ be a Hausdorff dynamical system and suppose $x,y, z \in X$. If $(y,z)$ is a trivial (resp. non-trivial) equicontinuity pair and $y \in \overline{\Orb(x)}$ then $(x,z)$ is a trivial (resp. non-trivial) equicontinuity pair. In particular, if $(x,y)$ and $(y,z)$ are non-trivial equicontinuity pairs then so is $(x,z)$.
\end{corollary}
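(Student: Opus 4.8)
The plan is to split on whether $(y,z)$ is trivial or non-trivial, and in each case to verify separately the two requirements on $(x,z)$: that it is an equicontinuity pair, and that $z$ sits in the right place relative to $x$ (outside $\Omega(x)$ in the trivial case, inside $\omega(x)$ in the non-trivial case). The membership $(x,z)\in\EqP(X,f)$ should come essentially for free: in the non-trivial case $(y,z)\in\EqP(X,f)$, so Remark \ref{RemarkOrbEqPair} gives $(x,z)\in\EqP(X,f)$ at once; in the trivial case, once $z\notin\Omega(x)$ is known, Proposition \ref{prop1.0} supplies the membership. So the real work is a short ``transfer'' argument moving the $\Omega$- resp.\ $\omega$-behaviour of $z$ back from $y$ to $x$ along the orbit closure.

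For the trivial case I would argue as follows. Assuming $z\notin\Omega(y)$, fix $V\in\mathcal{N}_z$, $W\in\mathcal{N}_y$ and $N_0\in\mathbb{N}$ with $f^n(W)\cap V=\emptyset$ for all $n>N_0$, and choose an open set $G$ with $y\in G\subseteq W$. Since $y\in\overline{\Orb(x)}$ there is $k\in\mathbb{N}_0$ with $f^k(x)\in G$; then $U\coloneqq(f^k)^{-1}(G)$ is an open neighbourhood of $x$ with $f^k(U)\subseteq W$, and for every $m>N_0+k$ one has $f^m(U)=f^{m-k}\bigl(f^k(U)\bigr)\subseteq f^{m-k}(W)$, which misses $V$ because $m-k>N_0$. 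Hence $z\notin\Omega(x)$, i.e.\ $(x,z)$ is a trivial equicontinuity pair (and it is an equicontinuity pair by Proposition \ref{prop1.0}).

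For the non-trivial case I would first recall that a non-trivial equicontinuity pair $(y,z)$ satisfies $(y,z)\in\EqP(X,f)$ and $z\in\omega(y)$ (Lemma \ref{lemma1.0}); Remark \ref{RemarkOrbEqPair} then yields $(x,z)\in\EqP(X,f)$, and it remains only to show $z\in\omega(x)$. For this, fix $V\in\mathcal{N}_z$ and $N\in\mathbb{N}$, pick an open $G$ with $z\in G\subseteq V$, use $z\in\omega(y)$ to get $m>N$ with $f^m(y)\in G$, and note that $(f^m)^{-1}(G)$ is an open neighbourhood of $y$, hence contains some $f^j(x)$ with $j\in\mathbb{N}_0$; then $f^{j+m}(x)\in V$ with $j+m>N$, so $z\in\omega(x)$ and $(x,z)$ is non-trivial. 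The final ``in particular'' clause is then immediate, since $(x,y)$ non-trivial forces $y\in\omega(x)\subseteq\overline{\Orb(x)}$, so the case just treated applies to $(y,z)$.

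I expect the only slightly delicate point to be the transfer argument, and even there the single idea that makes it work is to pull an open neighbourhood back through the continuous iterate $f^k$ (pushing a neighbourhood of $x$ forward would not in general produce an open set); everything else is routine index-chasing, so I do not anticipate a genuine obstacle.
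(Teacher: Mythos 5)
Your proposal is correct and takes essentially the same route as the paper: the trivial case is transferred by pulling a neighbourhood of $y$ back through a single iterate that lands the orbit of $x$ inside it, and the non-trivial case gets $(x,z)\in\EqP(X,f)$ from Remark \ref{RemarkOrbEqPair} and then checks $z\in\omega(x)$, with the ``in particular'' clause following since $y\in\omega(x)\subseteq\overline{\Orb(x)}$. The only (minor) difference is that you prove $z\in\omega(x)$ by a direct neighbourhood chase through $(f^m)^{-1}(G)$, whereas the paper splits $\overline{\Orb(x)}=\Orb(x)\cup\omega(x)$ and uses positive invariance of $\omega$-limit sets; both work, and your handling of possibly non-open neighbourhoods is if anything a little more careful.
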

\begin{proof}
If $(y,z)$ is a trivial equicontinuity pair then $z \notin \Omega(y)$. Let $U \in \mathcal{N}_y$, $V \in \mathcal{N}_z$ and $N \in \mathbb{N}$ be such that, for any $n>N$, $f^n(U) \cap V =\emptyset$. Now let $m  \in {\mathbb{N}_0}$ be such that $W=f^{-m}(U) \ni x$. Then, for all $n>N+m$, $f^n(W) \cap V =\emptyset$. Thus $(x,z)$ is a trivial equicontinuity pair. 

Now suppose that $(y,z)$ is a non-trivial equicontinuity pair.
If $y \in \Orb(x)$ then $\omega(x)=\omega(y)$ and so $z \in \omega(x)$. If $y \in \omega(x)$ then, since $\omega$-limit sets are positively invariant, $z \in \omega(x)$. Therefore we have $z \in \omega(x)$. It now suffices to check $(x,z) \in \EqP(X,f)$; but this is just Remark \ref{RemarkOrbEqPair}.

Finally, if $(x,y)$ and $(y,z)$ are non-trivial equicontinuity pairs then $y \in \omega(x)$ and the result follows by the above.
\end{proof}

\begin{remark}
Corollary \ref{CorNonTrivEqPairIsTrans} shows that the relation given by `non-trivial equicontinuity pair' is transitive.
\end{remark}

\begin{remark}\label{RemarkEqPairImpliesTransPtHasEqPartner}
It follows from Corollary \ref{CorNonTrivEqPairIsTrans} that if a system has a transitive point, say $x$, then, if $(a,b)$ is an equicontinuity pair then $(x,b)$ is also an equicontinuity pair; every equicontinuity partner is an equicontinuity partner of the transitive point.
\end{remark}


\begin{corollary}
Let $X$ be a Hausdorff space. If $(X,f)$ is minimal then, for any $x,y \in X$,
\[(x,y) \in \EqP(X,f) \implies \forall z \in X, \, (z,y) \in \EqP(X,f),\]
and
\[(x,y) \notin \EqP(X,f) \implies \forall z \in X, \, (z,y) \notin \EqP(X,f).\]
\end{corollary}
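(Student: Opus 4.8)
The plan is to push everything through the two orbit‑transfer results established immediately above, Lemma \ref{OrbNonEqPair} and its contrapositive Remark \ref{RemarkOrbEqPair}, using the single extra input that in a minimal system every orbit is dense. Concretely, minimality means $\omega(x)=X$ for all $x\in X$, so $\overline{\Orb(x)}=\Orb(x)\cup\omega(x)=X$ for every $x\in X$; hence for any two points $w,w'\in X$ we have simultaneously $w\in\overline{\Orb(w')}$ and $w'\in\overline{\Orb(w)}$.

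For the first implication, assume $(x,y)\in\EqP(X,f)$ and fix an arbitrary $z\in X$. By the previous paragraph $x\in\overline{\Orb(z)}$, so Remark \ref{RemarkOrbEqPair} (applied with its ``$x$'' taken to be our $z$, its ``$y$'' taken to be our $x$, and its ``$z$'' taken to be our $y$) gives $(z,y)\in\EqP(X,f)$. For the second implication, assume $(x,y)\notin\EqP(X,f)$ and let $O$ be a splitting neighbourhood of $y$ with regard to $x$ (one exists by Equation (\ref{eqn1.1})). Fix an arbitrary $z\in X$. Since $z\in\overline{\Orb(x)}$, Lemma \ref{OrbNonEqPair} applies verbatim and yields $(z,y)\notin\EqP(X,f)$, with $O$ still a splitting neighbourhood of $y$ with regard to $z$. (Alternatively, one may just note that the two displayed statements together assert that the set $\{w\in X:(w,y)\in\EqP(X,f)\}$ is either empty or all of $X$, so each of the two implications is formally equivalent to the other once it is quantified over all source points, and it suffices to establish either one.)

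I do not expect any genuine obstacle here: the content is entirely contained in Lemma \ref{OrbNonEqPair}/Remark \ref{RemarkOrbEqPair}. The only point demanding a little care is keeping straight the direction of the orbit‑closure containment each transfer result requires — non‑pairs propagate forward along $\overline{\Orb(\cdot)}$ while pairs propagate backward — but minimality collapses $\overline{\Orb(\cdot)}$ to the whole space in both directions, so both hypotheses are automatically met.
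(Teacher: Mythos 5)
Your argument is correct and is essentially the paper's: the paper deduces the first implication from Corollary \ref{CorNonTrivEqPairIsTrans} (which is itself just Remark \ref{RemarkOrbEqPair} plus the trivial/non-trivial bookkeeping you rightly skip) and the second from Lemma \ref{OrbNonEqPair}, with minimality supplying the dense orbit closures exactly as you use it.
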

\begin{proof}
The former statement follows from Corollary \ref{CorNonTrivEqPairIsTrans}, the latter from Lemma \ref{OrbNonEqPair}.
\end{proof}


The following theorem is a generalisation of \cite[Theorem 2.4]{AkinAuslanderBerg} (see Theorem \ref{thmEqPtsCoincideWithTransPts}).

\begin{theorem}\label{thmTopEqPtsCoincideWithTransPts}
Let $(X,f)$ be a transitive dynamical system, where $X$ is a Hausdorff space. Suppose there exists a topological equicontinuity point. Then the set of topological equicontinuity points coincides with the set of transitive points.

In particular, if $(X,f)$ is a minimal system and there is a topological equicontinuity point then the system is topologically equicontinuous.
\end{theorem}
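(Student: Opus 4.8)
The plan is to prove the two halves of the first statement and then read off the "in particular" as a special case. Fix a transitive system $(X,f)$ on a Hausdorff space $X$ and suppose $x_0 \in X$ is a topological equicontinuity point; in particular, by transitivity there is a transitive point (indeed, equicontinuity points will turn out to be such). First I would show every transitive point $t$ is a topological equicontinuity point. Let $(a,b)$ be an arbitrary pair with $b \in \Omega(a)$ (the trivial case $b \notin \Omega(a)$ is handled by Proposition \ref{prop1.0}); since $(X,f)$ is transitive, $\Omega(a) = X$ by Remark \ref{remarkOmegaTrans}, so this is really an arbitrary pair. Because $x_0$ is a topological equicontinuity point, $(x_0,b) \in \EqP(X,f)$. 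Now $x_0 \in \overline{\Orb(t)}$ since $t$ is transitive and hence $\overline{\Orb(t)} = X$, so Remark \ref{RemarkOrbEqPair} (the contrapositive of Lemma \ref{OrbNonEqPair}) gives $(t,b) \in \EqP(X,f)$. As $b$ was arbitrary, $t$ is a topological equicontinuity point.

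Next I would show the converse: every topological equicontinuity point $x$ is a transitive point, i.e.\ $\omega(x) = X$. By Lemma \ref{lemma2Omega(x)omega(x)}, topological equicontinuity at $x$ gives $\omega(x) = \Omega(x)$, and by Remark \ref{remarkOmegaTrans} transitivity gives $\Omega(x) = X$; hence $\omega(x) = X$, so $x$ is a transitive point. This already establishes the equality of the two sets, and as a by-product confirms the hypothesised equicontinuity point $x_0$ is itself transitive, so the set of transitive points is nonempty and the statement is not vacuous.

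Finally, the "in particular" follows immediately: if $(X,f)$ is minimal then every point is a transitive point, so by the equality just proved every point is a topological equicontinuity point, i.e.\ $\EqP(X,f) = X \times X$ and the system is topologically equicontinuous. I expect the only real subtlety to be the bookkeeping in the first half — making sure the roles of "first coordinate" and "second coordinate" in the equicontinuity pair relation are tracked correctly when invoking Lemma \ref{OrbNonEqPair}/Remark \ref{RemarkOrbEqPair}, since that lemma transports non-membership in $\EqP$ along orbit closures in the \emph{first} coordinate, and we need its contrapositive to move a genuine equicontinuity pair from $x_0$ to the transitive point $t$. Everything else is a direct appeal to results already in hand (Proposition \ref{prop1.0}, Lemma \ref{lemma2Omega(x)omega(x)}, Remark \ref{remarkOmegaTrans}, Remark \ref{RemarkOrbEqPair}).
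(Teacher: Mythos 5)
Your proposal is correct and follows essentially the same route as the paper: the direction ``equicontinuity point $\Rightarrow$ transitive point'' is identical (Lemma \ref{lemma2Omega(x)omega(x)} plus Remark \ref{remarkOmegaTrans}), and the converse transports the equicontinuity pairs of the given equicontinuity point to the transitive point along the dense orbit, which is exactly the paper's argument (it cites Corollary \ref{CorNonTrivEqPairIsTrans}, whose content is your Remark \ref{RemarkOrbEqPair}). The only cosmetic difference is your superfluous initial discussion of the pair $(a,b)$ and $\Omega(a)$, which plays no role in the argument.
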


\begin{proof}
Let $x \in X$ be a point of topological equicontinuity. By Lemma \ref{lemma2Omega(x)omega(x)}, $\omega(x)=\Omega(x)$; but since $(X,f)$ is a transitive system $\Omega(x)=X$ by Remark \ref{remarkOmegaTrans}. Hence $x$ is a transitive point.

Now suppose $x$ is a transitive point. Let $y$ be a point of topological equicontinuity. Then $y \in \omega(x)$ as $x$ is a transitive point. Now, $(y,z) \in \EqP(X,f)$ for all $z \in X$, and these are all non-trivial equicontinuity pairs by Theorem \ref{ChT}, therefore, by Corollary \ref{CorNonTrivEqPairIsTrans}, it follows that $(x,z) \in \EqP(X,f)$ for all $z \in X$; i.e. $x$ is a point of topological equicontinuity. 
\end{proof}

We are now in a position to present a generalised version of the Auslander-Yorke dichotomy for minimal systems; in \cite{AuslanderYorke} the authors show that a compact metric minimal system is either equicontinuous or is sensitive. The following definition was given by Good and Mac\'ias in \cite{GoodMacias}; they show it is equivalent to sensitivity if $X$ is a compact Hausdorff space. 

\begin{definition}
A dynamical system $(X,f)$, where $X$ is a Hausdorff space, is said to be Hausdorff sensitive if there exists a finite open cover $\mathcal{U}$ such that for any nonempty open set $V$ there exist $x,y \in V$, $x\neq y$, and $k 
\in \mathbb{N}$ such that $\{f^k(x),f^k(y)\} \not\subseteq U$ for all $U \in \mathcal{U}$.
\end{definition}

\begin{theorem}\label{theoremDOHsdoic}
Let $(X,f)$ be a system with a transitive point $x$, where $X$ is a regular Hausdorff space (i.e. $T_3$). If there exists $y \in X$ with $(x,y) \notin \EqP(X,f)$ then $(X,f)$ is Hausdorff sensitive.
\end{theorem}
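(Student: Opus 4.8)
The plan is to manufacture a Hausdorff-sensitivity cover out of a fixed splitting neighbourhood. Since $(x,y)\notin\EqP(X,f)$, let $O\in\mathcal N_y$ be a splitting neighbourhood of $y$ with regard to $x$. Using regularity of $X$, pick an open set $O'$ with $y\in O'\subseteq\overline{O'}\subseteq O$, and let $K=X\setminus O'$, a closed set not containing $y$. For each $z\in K$, regularity again gives disjoint open sets separating $z$ from $\overline{O'}$; but rather than covering $K$ (which may be non-compact), I would instead work with the two-element ``cover-like'' datum $\{O,\,X\setminus\overline{O'}\}$ — the point being that if two points $a,b$ satisfy $\{a,b\}\not\subseteq O$ \emph{and} $\{a,b\}\not\subseteq X\setminus\overline{O'}$, then one of them lies in $X\setminus O$ and the other meets $\overline{O'}$, so in particular not both can be inside $O'$. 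To get an honest \emph{finite open cover} $\mathcal U$ as the definition demands, I would take $\mathcal U=\{O,\,X\setminus\overline{O'}\}$ after first checking this is a cover (it is, since $\overline{O'}\subseteq O$), and then verify it witnesses Hausdorff sensitivity.

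The core of the argument is then a transitivity-plus-splitting pigeonhole. Let $V$ be an arbitrary nonempty open set. Since $x$ is a transitive point, $\overline{\Orb(x)}=X\supseteq V$, so pick $p\in V\cap\Orb(x)$, say $p=f^m(x)$; then $\overline{\Orb(p)}=\overline{\Orb(x)}$ contains $y$, and by Lemma~\ref{OrbNonEqPair} the pair $(p,y)\notin\EqP(X,f)$ with the \emph{same} splitting neighbourhood $O$. Now apply Equation~(\ref{eqn1.1}) (in the strengthened form of Lemma~\ref{RemarkInf1}) to the neighbourhoods $V\in\mathcal N_p$ and $V'\in\mathcal N_y$, where I choose $V'=O'$: there is $k\in\mathbb N$ with $f^k(V)\cap O'\neq\emptyset$ and $f^k(V)\not\subseteq O$. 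Fix $a\in V$ with $f^k(a)\in O'$ and $b\in V$ with $f^k(b)\notin O$. Then $a\neq b$ (their $k$-th images already differ), and $\{f^k(a),f^k(b)\}\not\subseteq O$ (because $f^k(b)\notin O$) while also $\{f^k(a),f^k(b)\}\not\subseteq X\setminus\overline{O'}$ (because $f^k(a)\in O'\subseteq\overline{O'}$). Hence $\{f^k(a),f^k(b)\}$ is contained in no member of $\mathcal U$, which is exactly the Hausdorff-sensitivity condition for $V$.

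I would then remark that $V$ was arbitrary, so $\mathcal U$ is the required finite open cover and $(X,f)$ is Hausdorff sensitive. The step I expect to be the main obstacle is the reduction from the arbitrary open set $V$ to a point of the orbit of $x$ together with the transport of the splitting neighbourhood: one must be careful that Lemma~\ref{OrbNonEqPair} applies with $z=p\in\Orb(x)\subseteq\overline{\Orb(x)}$ and delivers the \emph{same} $O$, and that the ``$f^k(V)\cap O'\neq\emptyset$ and $f^k(V)\not\subseteq O$'' conclusion is available for a neighbourhood $V$ of $p$ that is not assumed small — this is precisely where Lemma~\ref{RemarkInf1} (infinitely many such $k$, for \emph{any} neighbourhoods) does the work, so no shrinking of $V$ is needed. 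A minor point to handle cleanly is the degenerate possibility $p=y$ or overlaps among the chosen points; but since the witnessing iterate $k$ forces $f^k(a)\neq f^k(b)$, distinctness of $a$ and $b$ is automatic, and regularity ($T_3$) is used only to produce $O'$ with $\overline{O'}\subseteq O$, so no separation beyond $T_3$ is required.
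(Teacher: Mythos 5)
Your argument is correct and follows essentially the same route as the paper: regularity yields a nested open neighbourhood of $y$ inside the splitting neighbourhood $O$, a two-element cover built from it witnesses Hausdorff sensitivity, and the arbitrary open set is handled by combining the transitive point with Lemma \ref{OrbNonEqPair} together with Lemma \ref{RemarkInf1} --- which is exactly the paper's inline manoeuvre of pulling the open set back to a neighbourhood of $x$ and pushing forward. The only repair needed is that $O \in \mathcal{N}_y$ is not assumed open in this paper, so $\{O,\, X\setminus\overline{O'}\}$ need not be an \emph{open} cover; replace $O$ in the cover by an open set $V_1$ with $\overline{O'}\subseteq V_1\subseteq O$ (as the paper does with its $V_1$, $V_2$), which changes nothing else since $f^k(b)\notin O$ still forces $f^k(b)\notin V_1$.
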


\begin{proof}
Let $x$ and $y$ be as in the statement. Therefore
\begin{equation}
   \exists O \in \mathcal{N}_y : \forall U \in \mathcal{N}_x \, \forall V \in \mathcal{N}_y \, \exists n \in \mathbb{N} : f^n(U) \cap V \neq \emptyset \text{ and } f^n(U) \not\subseteq O.
\end{equation}
Let $V_1$ and $V_2$ be open neighbourhoods of $y$ such that $\overline{V_1} \subseteq O$ and $\overline{V_2} \subseteq V_1$; these exist as $X$ is regular.
Then $\mathcal{U}\coloneqq \{V_1, X \setminus \overline{V_2}\}$ is a finite open cover. Now let $U$ be an arbitrary nonempty open set. Let $n \in \mathbb{N}$ be such that $W=f^{-n}(U) \ni x$. Take $m>n$ such that $f^m(W) \cap V_2 \neq \emptyset$ and $f^m(W) \not \subseteq O$; such an $m$ exists by Lemma \ref{RemarkInf1}. Then $f^{m-n}(U) \cap V_2 \neq \emptyset$ and $f^{m-n}(U) \not \subseteq O$. In particular there exists $a,b \in U$ such that $f^{m-n}(a) \notin O$ and $f^{m-n}(b) \in V_2$. Then $\{f^{m-n}(a), f^{m-n}(b) \} \cap V_1= \{f^{m-n}(b)\}$ and $\{f^{m-n}(a), f^{m-n}(b) \} \cap X \setminus \overline{V_2}= \{f^{m-n}(a)\}$. \end{proof}

\begin{corollary}\label{GenA-YDichI}(Generalised Auslander-Yorke Dichotomy I) 
Let $X$ be a $T_3$ space. A minimal system $(X,f)$ is either topologically equicontinuous or Hausdorff sensitive.
\end{corollary}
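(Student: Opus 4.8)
The plan is to read the corollary off directly from Theorem \ref{theoremDOHsdoic}, using the single extra observation that in a minimal system \emph{every} point is a transitive point (since $\omega(x)=X$ for all $x$). No new machinery is needed: Theorem \ref{theoremDOHsdoic} already packages the hard direction, and minimality is exactly what is required to supply its transitivity hypothesis uniformly.

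Concretely, I would argue as follows. If $X=\emptyset$ (or, more generally, if $\EqP(X,f)=X\times X$) the system is topologically equicontinuous and we are in the first alternative, so assume $(X,f)$ is \emph{not} topologically equicontinuous. By the definition of topological equicontinuity of a system, $\EqP(X,f)\neq X\times X$, so we may fix $x,y\in X$ with $(x,y)\notin\EqP(X,f)$. Since $(X,f)$ is minimal, $\omega(x)=X$, i.e.\ $x$ is a transitive point. Now $X$ is $T_3$ by hypothesis, $x$ is a transitive point, and $(x,y)\notin\EqP(X,f)$; hence Theorem \ref{theoremDOHsdoic} applies and gives that $(X,f)$ is Hausdorff sensitive, which is the second alternative. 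This establishes the disjunction.

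I do not expect a genuine obstacle here; the only points that need care are bookkeeping ones. First, one must note that minimality gives the transitive-point property for \emph{the} point appearing as the first coordinate of the chosen non-equicontinuity pair, so that Theorem \ref{theoremDOHsdoic} can be invoked verbatim. Second, the degenerate cases ($X$ empty, or $X$ a single fixed point) sit in the topologically equicontinuous horn and should be acknowledged. If one wanted the two horns to be mutually exclusive rather than merely exhaustive, one would need an additional argument that a Hausdorff sensitive minimal $T_3$ system cannot be topologically equicontinuous; this is not required for the stated ``either\,/\,or'', and I would phrase the proof to deliver only the disjunction, exactly parallel to how Theorem \ref{theoremDOHsdoic} feeds into it.
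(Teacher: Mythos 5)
Your proposal is correct and matches the paper's own argument essentially verbatim: assume the system is not topologically equicontinuous, pick $(x,y)\notin\EqP(X,f)$, observe that minimality makes $x$ a transitive point, and invoke Theorem \ref{theoremDOHsdoic}. The extra remarks about degenerate cases and non-exclusivity of the two horns are fine but not needed.
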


\begin{proof}
Suppose it is not equicontinuous. Then there exists $x,y\in X$ with $(x,y) \notin \EqP(X,f)$. Since $x$ is a transitive point the result follows from Theorem \ref{theoremDOHsdoic}.
\end{proof}

We end this section with the following question. 


\begin{question}\label{QuestionDoesExistsTransEP}
Does there exist a transitive system $(X,f)$, where $X$ is a Hausdorff space, with a non-trivial equicontinuity pair $(x,y)$ but where $x$ is not a topological equicontinuity point?
\end{question}


The following result may help make some headway with Question \ref{QuestionDoesExistsTransEP}. 

\begin{proposition}\label{PE}
Suppose $(X,f)$ is a transitive dynamical system where $X$ is an infinite Hausdorff space. If $x \in X$ is an eventually periodic point then $(x,y)\notin \EqP(X,f)$ for any $y \in X$.
\end{proposition}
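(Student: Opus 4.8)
The plan is to argue by contradiction: suppose $(x,y)\in\EqP(X,f)$ for some $y\in X$ and derive a contradiction from the topological equicontinuity condition (Equation \ref{eqnEqCondition}). First I would set up the periodic skeleton. Since $x$ is eventually periodic its orbit is finite, so $C\coloneqq\omega(x)$ is a single finite periodic cycle; write $p=|C|$, so that $f^p$ fixes every point of $C$. As $(X,f)$ is transitive and $X$ is Hausdorff, Corollary \ref{Cor1.Eq} gives $y\in\omega(x)=C$; in particular $f^p(y)=y$ and there is some $n_0\in\mathbb{N}$ with $f^{n_0}(x)=y$, whence $f^{n_0+kp}(x)=y$ for every $k\in\mathbb{N}_0$.

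Next I would choose the "splitting‑type" neighbourhood $O$ of $y$ carefully, using that $X$ is infinite. Put $d_i\coloneqq f^i(y)$ for $0\le i<p$; these all lie in the finite set $C$, and since $X$ is infinite we may pick $z\in X\setminus C$. Since $X$ is Hausdorff we may pick, for each $i$, disjoint open sets $H_i\ni z$ and $G_i\ni d_i$; then $z\notin\overline{G_i}$ for every $i$, so $z\notin\bigcup_{i=0}^{p-1}\overline{G_i}$. By continuity of $f^0,\dots,f^{p-1}$ the set $O\coloneqq\bigcap_{i=0}^{p-1}f^{-i}(G_i)$ is an open neighbourhood of $y$ with $f^i(O)\subseteq G_i$ for all $0\le i<p$.

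Then comes the contradiction. Apply the topological equicontinuity condition to $x$, $y$ and this $O$ to obtain an open $U\ni x$ and $V\in\mathcal{N}_y$ (shrinking $U$ to its interior changes nothing) such that $f^n(U)\cap V\ne\emptyset\implies f^n(U)\subseteq O$ for all $n$. For every $k\ge 0$ we have $y=f^{n_0+kp}(x)\in f^{n_0+kp}(U)\cap V$, so $f^{n_0+kp}(U)\subseteq O$; writing $A=f^{n_0}(U)$ this says $f^{kp}(A)\subseteq O$ for all $k$, and decomposing exponents modulo $p$ gives
\[\bigcup_{m\ge n_0}f^m(U)=\bigcup_{t\ge 0}f^t(A)\subseteq\bigcup_{i=0}^{p-1}f^i(O)\subseteq\bigcup_{i=0}^{p-1}G_i.\]
On the other hand, by transitivity and Lemma \ref{lemmaEq0.1} the set $N(U,W)$ is infinite for every nonempty open $W$, so it meets $\{n_0,n_0+1,\dots\}$; hence $\bigcup_{m\ge n_0}f^m(U)$ is dense in $X$. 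Taking closures, $X\subseteq\bigcup_{i=0}^{p-1}\overline{G_i}$, contradicting $z\notin\bigcup_{i=0}^{p-1}\overline{G_i}$. This forces $(x,y)\notin\EqP(X,f)$.

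The main obstacle is coordinating the two requirements "$f^n(U)$ meets $V$" and "$f^n(U)\not\subseteq O$": transitivity by itself does not force those forward images of $U$ that return near $y$ to also spread far from $y$ (an irrational rotation shows this), so a purely transitivity‑based argument cannot succeed. Eventual periodicity of $x$ is exactly what removes the difficulty, since it supplies an entire arithmetic progression of times $n$ at which $f^n(x)=y$, so the "meets $V$" half is automatic along that progression, leaving only the density of $\bigcup_{m\ge n_0}f^m(U)$ — which transitivity does provide — together with the $p$‑periodic bookkeeping of the images to finish the job. A secondary, minor point is that only Hausdorffness (not regularity) is assumed, which is why $O$ is built by Hausdorff‑separating a single exterior point $z\notin C$ from the finitely many cycle points $d_0,\dots,d_{p-1}$, rather than by the more usual shrinking of closures.
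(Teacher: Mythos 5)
Your proof is correct and follows essentially the same route as the paper's: use Corollary \ref{Cor1.Eq} to place $y$ on the finite periodic cycle, Hausdorff-separate a point $z$ off that cycle from the cycle points, pull back to build $O=\bigcap_i f^{-i}(G_i)$, exploit the arithmetic progression of times with $f^{n}(x)=y$ to force all late images of $U$ into $\bigcup_i f^i(O)$, and contradict transitivity via Lemma \ref{lemmaEq0.1}. The only difference is cosmetic: you phrase the contradiction as density of $\bigcup_{m\ge n_0}f^m(U)$ versus the closed set $\bigcup_i\overline{G_i}$ missing $z$, whereas the paper notes directly that $f^k(U)$ misses a fixed neighbourhood $W$ of $z$ for all $k\ge m$.
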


\begin{proof}
Write $\Orb(x)=\{x,f(x),\ldots, f^l(x)\}$. Suppose $(x,y) \in \EqP(X,f)$. Then by Corollary \ref{Cor1.Eq} it follows that $y \in \omega(x)$; as $x$ is eventually periodic this means $y \in \Orb(x)$ and $y$ is periodic. Write $y=f^m(x)$ and let $n$ be the period of $y$ (so $n \leq l$). Let $z \in X \setminus \Orb(x)$; for each $i \in \{0,\ldots, n-1\}$ let $W_i \in \mathcal{N}_z$ and $O_i\in \mathcal{N}_{f^i(y)}$ be such that $W_i \cap O_i = \emptyset$. Now let 
\[O \coloneqq \bigcap_{i=0} ^{n-1} f^{-i}(O_i),\]
and
\[W \coloneqq \bigcap_{i=0} ^{n-1} W_i.\]
Thus $W \in \mathcal{N}_z$ and $O \in \mathcal{N}_y$. Now let $U \in \mathcal{N}_x$ and $V \in \mathcal{N}_y$ satisfy the equicontinuity condition for $x$, $y$ and $O$. Notice that $f^i(O) \cap W = \emptyset$ for all $i \in \{0, \ldots, n-1\}$. Since $f^m(U) \cap V \neq \emptyset$ we have $f^m(U) \subseteq O$. Furthermore, $f^{m+an}(U) \cap V \neq \emptyset$ for all $a \in {\mathbb{N}_0}$, hence $f^{m+an}(U) \subseteq O$. It follows that $f^k(U) \cap W =\emptyset$ for all $k \geq m$, this contradicts Lemma \ref{lemmaEq0.1}.
\end{proof}

\section{Even continuity}\label{SectionEvenCty}

Even continuity, as defined by Kelley \cite[p. 234]{Kelley}, is a weaker concept than that of topological equicontinuity. If $X$ and $Y$ are topological spaces we say a collection of maps $\mathscr{F}$ from $X$ to $Y$ is \textit{evenly continuous} at an ordered pair $(x,y) \in X \times Y$ if for any $O \in \mathcal{N}_y$ there exist neighbourhoods $U \in \mathcal{N}_x$ and $V \in \mathcal{N}_y$ such that, for any $f \in \mathscr{F}$, if $f(x) \in V$ then $f(U) \subseteq O$; when this is the case we refer to $(x,y)$ as an \textit{even continuity pair}. We say $\mathscr{F}$ is \textit{evenly continuous} at a point $x \in X$ if it is evenly continuous at $(x,y)$ for all $y\in Y$. We say the collection is \textit{evenly continuous} if it is evenly continuous at every $x \in X$. We remark that when $Y$ is a compact Hausdorff space the notions of topological equicontinuity, even continuity and equicontinuity coincide (see \cite[Theorem 7.23]{Kelley}). Finally, we observe that if a family is evenly continuous (resp. topological equicontinuous) then each member of that family is necessarily continuous \cite[pp. ~162]{Engelking}.

Given a dynamical system $(X,f)$, we denote the collection of even continuity pairs and the collection of even continuity points by $\EvP(X,f) \subseteq X \times X$ and $\Ev(X,f) \subseteq X$ respectively. Note that in this case, if we consider the above definitions, we have $Y=X$ and $\mathscr{F}=\{f^n \mid n \in \mathbb{N}\}$. By definition it follows that $(X,f)$ is evenly continuous precisely when $\EvP(X,f) = X \times X$. For $(x,y) \in \EvP(X,f)$, we refer to the condition
\begin{equation}\label{eqnEvCondition}
\forall O\in \mathcal{N}_y \, \exists U \in \mathcal{N}_x \, \exists V \in \mathcal{N}_y : \forall n \in \mathbb{N}, f^n(x) \in V \implies f^n(U) \subseteq O,
\end{equation}
as the \textit{even continuity condition} for $x$ and $y$. We say that $U$ and $V$, as in Equation \ref{eqnEvCondition}, satisfy the even continuity condition for $x$, $y$ and $O$.


\begin{remark}\label{Remark5.0}
Clearly every equicontinuity pair is an even continuity pair.
\end{remark}

As pointed out by others (e.g. \cite{Royden}), the converse to Remark \ref{Remark5.0} is not true in general. The following example demonstrates this.

\begin{example}
For each $n \in \mathbb{N}$, let $X_n$ be the finite word $10^n$ and take $x=X_1X_2X_3\ldots$. For each $n \in \mathbb{N}$, let $z_n=X_1X_2\ldots X_n0^\infty$. Let $y=0^\infty$. Take
\[Y \coloneqq \{0^m z_n, 0^m x, 0^\infty \mid n,m\in \mathbb{N}\},\]
and let
\[X\coloneqq \overline{\{\sigma^k(y) \mid y \in Y, k \in {\mathbb{N}_0}\}}.\]
where the closure is taken with regard to the full shift $\Sigma^{\mathbb{N}_0}$.

Note that,
\[\omega(x)=\{0^\infty, 0^n10^\infty \mid n \in {\mathbb{N}_0}\},\]
and for each $i \in \mathbb{N}$,
\[\omega(z_i)=\{0^\infty \}.\]


Considering the shift system $(X,\sigma)$, it is easy to see that $(x,0^\infty)$ is a non-trivial even continuity pair in $(X,\sigma)$ (i.e. it is an even continuity pair and $0^\infty \in \omega(x)$). Furthermore, it is not an equicontinuity pair; arbitrarily close to $x$ are points that map onto $0^\infty$, which is a fixed point, but $x$ itself is not pre-periodic. To show this explicitly, take $O=[0] \in \mathbb{N}_{0^\infty}$. Picking $U \in \mathcal{N}_x$, there exists $N \in \mathbb{N}$ such that $f^k(U) \ni 0^\infty$ for all $k >N$; in particular, for any $V \in \mathcal{N}_{0^\infty}$, $f^k(U) \cap V \neq \emptyset$ for all $k\geq N$. But there exists $k\geq N$ such that $f^k(x)\in [1]$, hence $(x,0^\infty) \notin \EqP(X,f)$.
\end{example}


\begin{proposition}\label{prop5.0}
Let $(X,f)$ be a dynamical system where $X$ Hausdorff space. Let $x,y \in X$ and suppose $y \notin \omega(x)$. Then $(x,y) \in \EvP(X,f)$.
\end{proposition}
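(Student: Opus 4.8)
The plan is to mimic the proof of Proposition~\ref{prop1.0}, replacing the non-wandering condition used there by the (weaker) $\omega$-limit condition available here, and the topological equicontinuity condition by the even continuity condition of Equation~(\ref{eqnEvCondition}). The crucial point is that $y\notin\omega(x)$ forces the forward orbit of $x$ to enter a fixed neighbourhood of $y$ only finitely often, which is exactly what is needed in order to reduce to the finitary statement already established in Corollary~\ref{corFiniteTopEqCondition}.

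First I would unwind the hypothesis: since $y\notin\omega(x)$ there exist $O'\in\mathcal{N}_y$ and $N\in\mathbb{N}$ with $f^m(x)\notin O'$ for all $m>N$. Then, to verify that $(x,y)$ satisfies the even continuity condition, fix an arbitrary $O\in\mathcal{N}_y$ and apply Corollary~\ref{corFiniteTopEqCondition} to $x$, $y$, the integer $N$ and the neighbourhood $O$, obtaining $U\in\mathcal{N}_x$ and $V\in\mathcal{N}_y$ such that, for every $k\in\{1,\dots,N\}$, $f^k(U)\cap V\neq\emptyset$ implies $f^k(U)\subseteq O$. Shrinking $V$ to $V\cap O'$ we may assume $V\subseteq O'$; this does not disturb the displayed implication, since making $V$ smaller only weakens its hypothesis.

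Finally I would check the even continuity condition directly. Suppose $n\in\mathbb{N}$ with $f^n(x)\in V$. Because $V\subseteq O'$ we get $f^n(x)\in O'$, hence $n\le N$ by the choice of $O'$ and $N$. Because $x\in U$, the point $f^n(x)$ lies in $f^n(U)\cap V$, so this intersection is nonempty; as $n\in\{1,\dots,N\}$, the implication from Corollary~\ref{corFiniteTopEqCondition} yields $f^n(U)\subseteq O$. Since $O\in\mathcal{N}_y$ was arbitrary, $(x,y)\in\EvP(X,f)$.

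There is no serious obstacle here: the only two points needing a little care are the reduction ``$f^n(x)\in V$ forces $n\le N$'' (which is precisely where the $\omega$-limit hypothesis is used) and the observation that passing from $V$ to $V\cap O'$ preserves the conclusion of Corollary~\ref{corFiniteTopEqCondition}. It is worth noting why ``even continuity pair'' is the right target: under the stronger hypothesis $y\notin\Omega(x)$, Proposition~\ref{prop1.0} already produces a trivial equicontinuity pair, but $y\notin\omega(x)$ alone need not give $(x,y)\in\EqP(X,f)$, so one cannot simply combine Proposition~\ref{prop1.0} with Remark~\ref{Remark5.0}; the argument above is genuinely needed.
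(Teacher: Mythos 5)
Your proof is correct and follows essentially the same route as the paper's own argument: extract a neighbourhood of $y$ that the orbit of $x$ avoids after time $N$, invoke Corollary~\ref{corFiniteTopEqCondition} for the finitely many remaining times, and shrink $V$ into the avoiding neighbourhood so that $f^n(x)\in V$ forces $n\le N$. The closing remark on why the conclusion must be an even continuity pair rather than an equicontinuity pair is accurate but not needed for the proof itself.
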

\begin{proof}
Let $O \in \mathcal{N}_y$ be given. Since $y \notin \omega(x)$ there exist $V \in \mathcal{N}_y$ and $N \in \mathbb{N}$ such that $f^n(x) \notin V$ for all $n> N$. 

By Corollary \ref{corFiniteTopEqCondition}, there exist $U^\prime$ and $V^\prime$ such that, for any $k \in \{1,\ldots, N\}$, if $f^k(U^\prime) \cap V^\prime \neq \emptyset$ then $f^k(U^\prime) \subseteq O$; without loss of generality $U^\prime \subseteq U$ and $V^\prime \subseteq V \cap O$. In particular this means that, for all $k \in \{1,\ldots, N\}$, if $f^k(x) \in V^\prime$ then $f^k(U) \subseteq O$. Then, since $f^n(x) \notin V^\prime$ for all $n>N$, $U^\prime$ and $V^\prime$ satisfy the even continuity condition for $x$, $y$ and $O$. As $O \in \mathcal{N}_y$ was picked arbitrarily the result follows.
\end{proof}



\begin{remark}
If $X$ is a Hausdorff space, putting together propositions \ref{prop1.0} and \ref{prop5.0}, we have, for a pair $x,y \in X$, the following: 
\begin{itemize}
\item If $y \notin \omega(x)$ then $(x,y) \in \EvP(X,f)$.
\item If $y \notin \Omega(x)$ then $(x,y) \in \EqP(X,f)$. 
\end{itemize} 
\end{remark}


\begin{definition}
If $(X,f)$ is a dynamical system, where $X$ is a Hausdorff space. We say $(x,y) \in X \times X$ is a trivial even continuity pair if $y \notin \omega(x)$.
\end{definition}

\begin{remark} Proposition \ref{prop5.0} tells us that a trivial even continuity pair is indeed an even continuity pair. We emphasise that, by definition, if $(x,y) \in \EvP(X,f)$ then either they are a trivial even continuity pair or $y \in \omega(x)$. Finally, it is worth observing that, by Lemma \ref{lemma1.0} and Remark \ref{Remark5.0}, a non-trivial equicontinuity pair is also a non-trivial even continuity pair.
\end{remark}

The statement $(x,y) \notin \EvP(X,f)$, for $x,y \in X$, means precisely
\begin{equation}\label{eqnEvSplit}
    \exists O \in \mathcal{N}_y : \forall U \in \mathcal{N}_x \, \forall V \in \mathcal{N}_y \, \exists n \in \mathbb{N} : f^n(x)\in V \text{ and } f^n(U) \not\subseteq O.
\end{equation}
We shall refer to a neighbourhood such as $O$ in equation \ref{eqnEvSplit} as an \textit{even-splitting neighbourhood} of $y$ with regard to $x$. It is straightforward to see that every even-splitting neighbourhood of $y$ with regard to $x$ is also a splitting neighbourhood of $y$ with regard to $x$. Notice that, by Proposition \ref{prop5.0}, if $(x,y) \notin \EvP(X,f)$ then $y \in \omega(x)$.

The proof of lemmas \ref{lemmaEvPOpenAt} and \ref{RemarkInf3} are very similar to that of lemmas \ref{lemmaEqPOpenAt} and \ref{RemarkInf1} respectively and are thereby omitted.

\begin{lemma}\label{lemmaEvPOpenAt}
Let $(X,f)$ be a dynamical system, where $X$ is a Hausdorff space. If $(x,y) \in \EvP(X,f)$, $f$ is open at $y$ and there is a neighbourhood base for $y$, $\mathcal{B}_y\subseteq \mathcal{N}_y$, such that $f^{-1}(f(O \cap \Orb(x)))=O\cap \Orb(x)$ for all $O \in \mathcal{B}_y$, then $(x,f(y)) \in \EvP(X,f)$.
\end{lemma}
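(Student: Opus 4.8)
The plan is to mimic the proof of Lemma \ref{lemmaEqPOpenAt}, making only the adjustments forced by replacing the topological equicontinuity condition (which tracks whether $f^n(U)$ meets $V$) with the even continuity condition (which tracks only whether $f^n(x)$ lands in $V$). Since the even continuity condition refers to the orbit of the single point $x$ rather than to the neighbourhood $U$, the relevant preimage-invariance hypothesis naturally weakens from $f^{-1}(f(O))=O$ to $f^{-1}(f(O\cap\Orb(x)))=O\cap\Orb(x)$, which is exactly what the statement assumes.

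First I would fix $O\in\mathcal{N}_{f(y)}$, so that $f^{-1}(O)\in\mathcal{N}_y$, and choose $O'\in\mathcal{B}_y$ with $f(O')\subseteq O$; since $f$ is open at $y$, $f(O')\in\mathcal{N}_{f(y)}$. Applying $(x,y)\in\EvP(X,f)$ to $O'$ gives $U\in\mathcal{N}_x$ and $V\in\mathcal{N}_y$ satisfying the even continuity condition for $x$, $y$ and $O'$; I would shrink $V$ so that $V\subseteq O'$ and $V\in\mathcal{B}_y$. Then I claim $U$ and $f(V)$ witness the even continuity condition for $x$, $f(y)$ and $O$. Suppose $n\in\mathbb{N}$ and $f^n(x)\in f(V)$. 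Then $f^n(x)=f(v)$ for some $v\in V$, so $f^{n-1}(x)$ — which lies in $\Orb(x)$ — satisfies $f(f^{n-1}(x))\in f(V)\subseteq f(V\cap\Orb(x))$ (using $f^{n-1}(x)\in\Orb(x)$ to land inside the orbit), hence $f^{n-1}(x)\in f^{-1}(f(V\cap\Orb(x)))=V\cap\Orb(x)\subseteq V$. If $n=1$ this reads $x\in V\subseteq O'$, and then $f(U)\subseteq$ ... here I need to be a little careful: actually when $n=1$, from $f^{n-1}(x)=x\in V\subseteq O'$ I should use that in the even-continuity setup the "$n=0$" behaviour forces $x\in V$ to imply a local bound; the cleanest route, as in Lemma \ref{lemmaEqPOpenAt}, is to arrange at the outset that $U\subseteq O'$ (possible since $x\in V\subseteq O'$ once we know $x\in V$, or simply by intersecting $U$ with $O'$ and noting $x\in V\subseteq O'$), so that $f(U)\subseteq f(O')\subseteq O$. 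If $n>1$, then $f^{n-1}(x)\in V$ gives $f^{n-1}(U)\subseteq O'$ by the even continuity condition for $x,y,O'$, whence $f^n(U)=f(f^{n-1}(U))\subseteq f(O')\subseteq O$. In all cases $f^n(x)\in f(V)$ implies $f^n(U)\subseteq O$, so $(x,f(y))\in\EvP(X,f)$.

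The main obstacle is the $n=1$ boundary case, exactly as in the proof of Lemma \ref{lemmaEqPOpenAt}: one must ensure that membership $x\in V$ already yields $f(U)\subseteq O$, which is handled by choosing $U$ small enough at the start (intersecting with $O'$, using continuity at $x$ and that $x\in V\subseteq O'$) rather than by invoking the even continuity condition at iterate $0$. Everything else is a routine transcription of the earlier argument, with "$f^n(U)\cap V\ne\emptyset$" systematically replaced by "$f^n(x)\in V$" and the orbit-restricted preimage hypothesis used in place of the full one; this is why the paper is justified in omitting the details.
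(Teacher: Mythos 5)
The key step of your argument is broken. From $f^n(x)\in f(V)$ you write $f(f^{n-1}(x))\in f(V)\subseteq f(V\cap\Orb(x))$, but this inclusion runs the wrong way: in general $f(V\cap\Orb(x))\subseteq f(V)$, not conversely (indeed if $V\cap\Orb(x)$ were empty the right-hand side would be empty while the left is not). The parenthetical ``using $f^{n-1}(x)\in\Orb(x)$ to land inside the orbit'' conflates two different points: what you need is a witness $v$ lying in $V\cap\Orb(x)$ with $f(v)=f^n(x)$, whereas all you know is that some $v\in V$ (possibly off the orbit) has $f(v)=f^n(x)$; the fact that $f^{n-1}(x)$ is an orbit point with the right image only helps if $f^{n-1}(x)\in V$, which is exactly what you are trying to prove, so the reasoning is circular. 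Concretely, the stated hypothesis $f^{-1}(f(V\cap\Orb(x)))=V\cap\Orb(x)$ is perfectly consistent with the existence of a time $n$ and a point $v\in V\setminus\Orb(x)$ with $f(v)=f^n(x)$ while $f^{n-1}(x)\notin V$; at such a time $f^n(x)\in f(V)=W$ but the even continuity condition for $x,y,O^\prime$ gives you no control over $f^n(U)$, so your choice of witnesses fails. What your transcription actually requires is the orbit-relativisation on the other side, namely $f^{-1}(f(V))\cap\Orb(x)\subseteq V$ (equivalently $f^{-1}(f(V))\cap\Orb(x)=V\cap\Orb(x)$): with that, $f^n(x)\in f(V)$ does give $f^{n-1}(x)\in V$ and the rest of the argument runs. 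You either need to work with that form of the hypothesis, or supply an argument showing how the stated form yields the implication $f^n(x)\in f(V)\Rightarrow f^{n-1}(x)\in V$; as written there is a genuine gap at precisely the point where the orbit hypothesis is supposed to do its work.

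Your treatment of the $n=1$ case is also not right as stated: you cannot ``arrange at the outset that $U\subseteq O^\prime$,'' since $O^\prime$ is a neighbourhood of $y$, not of $x$, and intersecting $U$ with $O^\prime$ need not leave a neighbourhood of $x$ (neighbourhoods here are not assumed open, and $x$ may not even lie in $O^\prime$). The proof of Lemma \ref{lemmaEqPOpenAt} handles this boundary case by a dichotomy you should copy: if $x\neq y$, use Hausdorffness to shrink so that the offending case cannot occur (there, $U\cap V=\emptyset$; here, arrange $x\notin V$ so that the pulled-back condition at $n=1$ is vacuous), and if $x=y$ take $U\subseteq V\subseteq O^\prime$, whence $f(U)\subseteq f(O^\prime)\subseteq O$. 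This second issue is repairable along the lines of the paper's earlier proof, but the first one is the substantive defect in the proposal.
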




\begin{corollary}
Let $(X,f)$ be a dynamical system, where $X$ is a Hausdorff space. If $f$ is a homeomorphism and $(x,y) \in \EvP(X,f)$ then $(x,f(y)) \in \EqP(X,f)$.
\end{corollary}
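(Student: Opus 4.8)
The plan is to reach the stated conclusion $(x,f(y))\in\EqP(X,f)$ in two stages: first I would land the shifted pair in $\EvP(X,f)$, and then strengthen this to $\EqP(X,f)$ using invertibility. The first stage is an immediate application of Lemma \ref{lemmaEvPOpenAt}, whose two hypotheses are automatic for a homeomorphism. Indeed $f$ is open at $y$ (in fact everywhere), and because $f$ is injective one has $f^{-1}(f(A))=A$ for \emph{every} subset $A\subseteq X$, so the neighbourhood-base requirement $f^{-1}(f(O\cap\Orb(x)))=O\cap\Orb(x)$ holds for all $O$ with the choice $\mathcal{B}_y=\mathcal{N}_y$. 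Hence Lemma \ref{lemmaEvPOpenAt} yields $(x,f(y))\in\EvP(X,f)$ with no extra work, fixing $f(y)$ as an even continuity partner of $x$.

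For the second stage I would verify the topological equicontinuity condition (Equation \ref{eqnEqCondition}) at $(x,f(y))$ by mimicking the proof of Lemma \ref{lemmaEqPOpenAt}, but feeding in even continuity in place of topological equicontinuity. Given $O\in\mathcal{N}_{f(y)}$, I would set $f^{-1}(O)\in\mathcal{N}_y$ and apply the even continuity condition (Equation \ref{eqnEvCondition}) for $x$, $y$ and $f^{-1}(O)$ to obtain $U_0\in\mathcal{N}_x$ and $V_0\in\mathcal{N}_y$ with the controlling property that $f^{n}(x)\in V_0$ forces $f^{n+1}(U_0)\subseteq O$. I would then propose the candidates $V\coloneqq f(V_0)\in\mathcal{N}_{f(y)}$ (open since $f$ is a homeomorphism) and some $U\subseteq U_0$, and test the implication $f^m(U)\cap V\neq\emptyset\Rightarrow f^m(U)\subseteq O$. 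Exactly as in Lemma \ref{lemmaEqPOpenAt}, injectivity together with $f^{-1}(f(V_0))=V_0$ reduces $f^m(U)\cap f(V_0)\neq\emptyset$ to $f^{m-1}(U)\cap V_0\neq\emptyset$; from here the controlling property would deliver $f^m(U)\subseteq O$ as soon as one knows the point condition $f^{m-1}(x)\in V_0$.

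The main obstacle is precisely the gap between the two triggers. The reduction yields $f^{m-1}(U)\cap V_0\neq\emptyset$, an \emph{intersection} hitting witnessed by some $z\in U$, whereas the even continuity property only fires from the \emph{point} condition $f^{m-1}(x)\in V_0$; bridging these is the crux, and it is where the homeomorphism must contribute more than mere openness. My plan to close the gap is to use Corollary \ref{corFiniteTopEqCondition} to absorb the finitely many early times directly, and for the remaining large times to shrink $U$ and $V_0$ so that any hitting of $V_0$ by $f^{m-1}(U)$ is forced through the orbit of $x$ itself, thereby converting the intersection trigger into the point trigger needed to invoke Equation \ref{eqnEvCondition}. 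I expect this conversion to be the delicate heart of the argument, with the first stage and the algebraic reductions of the second stage being entirely routine.
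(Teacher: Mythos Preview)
The paper's own proof is a single line: ``Immediate from Lemma \ref{lemmaEvPOpenAt}.'' That lemma, however, concludes $(x,f(y))\in\EvP(X,f)$, not $\EqP(X,f)$; the $\EqP$ in the corollary's conclusion is evidently a typographical slip (compare the parallel corollary after Lemma \ref{lemmaEqPOpenAt}). Your Stage~1 is precisely the paper's argument and already delivers what the authors intended.

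Your Stage~2 is therefore an attempt to prove a strictly stronger assertion than anything the paper establishes, and you yourself correctly locate the obstruction: the even continuity hypothesis only fires on the \emph{point} condition $f^{m-1}(x)\in V_0$, whereas the $\EqP$ conclusion requires control under the weaker \emph{intersection} condition $f^{m-1}(U)\cap V_0\neq\emptyset$. Your proposed fix---shrinking $U$ and $V_0$ so that every hit of $V_0$ by $f^{m-1}(U)$ is ``forced through the orbit of $x$''---is not a mechanism that general Hausdorff topology or invertibility of $f$ supplies; there is no reason a neighbourhood of $x$, however small, should have all of its late images meet $V_0$ only when $x$ itself lands there. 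Indeed, were the stated strengthening true, iterating it would give $(x,f^n(y))\in\EqP(X,f)$ for all $n$, and for any periodic $y$ this would collapse $\EvP$ to $\EqP$ at that pair---a conclusion the paper nowhere claims and which runs counter to its whole Section \ref{SectionEvenCty} programme of distinguishing the two notions. So the gap you flag is real and, as far as the paper is concerned, not meant to be closed: read the conclusion as $\EvP$, stop after Stage~1, and you match the paper exactly.
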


\begin{proof}
Immediate from Lemma \ref{lemmaEvPOpenAt}.
\end{proof}

\begin{lemma}\label{RemarkInf3} Let $(X,f)$ be a dynamical system, where $X$ is Hausdorff space. Suppose $(x,y)\notin \EvP(X,f)$ and let $O$ be an even-splitting neighbourhood of $y$ with regard to $x$. Then, for any pair of neighbourhoods $U$ and $V$ of $x$ and $y$ respectively, the set of natural numbers $n$ for which $f^n(x) \in V$ and $f^n(U) \not\subseteq O$ is infinite.
\end{lemma}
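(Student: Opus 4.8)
The statement mirrors Lemma \ref{RemarkInf1}, and the note preceding Lemma \ref{lemmaEvPOpenAt} already tells us the proof should be "very similar" to that of Lemma \ref{RemarkInf1}. So the plan is to adapt that argument verbatim, swapping the topological-equicontinuity condition for the even-continuity condition and replacing uses of Corollary \ref{corFiniteTopEqCondition} by its consequence for even continuity. Fix neighbourhoods $U \in \mathcal{N}_x$ and $V \in \mathcal{N}_y$ and set
\[
A = \{n \in \mathbb{N} \mid f^n(x) \in V \text{ and } f^n(U) \not\subseteq O\}.
\]
Since $O$ is an even-splitting neighbourhood of $y$ with regard to $x$, equation \ref{eqnEvSplit} applied to $U$ and $V$ gives $A \neq \emptyset$. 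Suppose, for contradiction, that $A$ is finite, and let $N$ be its largest element.

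Next I would produce a shrunken pair of neighbourhoods that kills off all the "small" indices. By Corollary \ref{corFiniteTopEqCondition} there exist $U' \in \mathcal{N}_x$ and $V' \in \mathcal{N}_y$ such that, for every $k \in \{1,\ldots,N\}$, if $f^k(U') \cap V' \neq \emptyset$ then $f^k(U') \subseteq O$; and without loss of generality $U' \subseteq U$ and $V' \subseteq V$. In particular, for each such $k$, if $f^k(x) \in V'$ then certainly $f^k(U') \cap V' \neq \emptyset$, hence $f^k(U') \subseteq O$; so no index in $\{1,\ldots,N\}$ lies in the analogous set
\[
A' = \{n \in \mathbb{N} \mid f^n(x) \in V' \text{ and } f^n(U') \not\subseteq O\}.
\]
On the other hand, applying equation \ref{eqnEvSplit} to the pair $U', V'$ (using that $(x,y)\notin \EvP(X,f)$ with the same even-splitting neighbourhood $O$) yields $A' \neq \emptyset$. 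Any $m \in A'$ therefore satisfies $m > N$; but $U' \subseteq U$ and $V' \subseteq V$ give $f^m(x) \in V' \subseteq V$ and $f^m(U') \subseteq f^m(U)$, so $f^m(U) \not\subseteq O$ would follow only if we argue correctly — here one must be mildly careful: $f^m(U') \not\subseteq O$ does not immediately give $f^m(U) \not\subseteq O$. The clean fix, exactly as in Lemma \ref{RemarkInf1}, is to note $A' \subseteq A$ is \emph{not} what we need; rather we directly get $m \in A$ because $f^m(x) \in V' \subseteq V$ and, since $m \in A'$ with $U' \subseteq U$, in fact we want $m \in A$ via $f^m(U') \not\subseteq O \implies f^m(U) \not\subseteq O$, which \emph{does} hold because $U' \subseteq U$ implies $f^m(U') \subseteq f^m(U)$ and hence if $f^m(U')$ escapes $O$ so does the larger set $f^m(U)$. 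Thus $m \in A$ with $m > N$, contradicting the maximality of $N$. Hence $A$ is infinite.

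\textbf{Main obstacle.} There is essentially no obstacle: the argument is a routine transcription of the proof of Lemma \ref{RemarkInf1}. The only point deserving a moment's attention is the monotonicity step just described — checking that shrinking $U$ to $U'$ and $V$ to $V'$ lets one transport an index witnessing failure for $(U',V')$ back to one witnessing failure for $(U,V)$ — and this works in the "$f^n(U)\not\subseteq O$" direction precisely because enlarging the set can only make it harder to be contained in $O$, while "$f^n(x)\in V'$" trivially upgrades to "$f^n(x)\in V$". One should also double-check that the hypothesis "$(x,y) \notin \EvP(X,f)$" is genuinely used twice: once to get $A \neq \emptyset$ and once to get $A' \neq \emptyset$, both via equation \ref{eqnEvSplit} with the fixed even-splitting neighbourhood $O$.
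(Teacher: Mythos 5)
Your proof is correct and is exactly the argument the paper intends: the paper omits the proof of Lemma \ref{RemarkInf3}, noting it is analogous to Lemma \ref{RemarkInf1}, and your transcription (using Corollary \ref{corFiniteTopEqCondition} to kill indices up to $N$, then invoking the even-splitting property of $O$ for the shrunken pair $U'\subseteq U$, $V'\subseteq V$) is that analogue. The monotonicity point you worry over is in fact immediate, since $f^m(U')\subseteq f^m(U)$ means any point of $f^m(U')$ escaping $O$ already witnesses $f^m(U)\not\subseteq O$, and your final resolution of it is right.
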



\begin{lemma}\label{OrbNonEvPair}
Let $(X,f)$ be a dynamical system, where $X$ is Hausdorff space. Let $x,y \in X$. If $(x,y)\notin \EvP(X,f)$ and $O$ is an even-splitting neighbourhood of $y$ with regard to $x$ then, for any $n \in \mathbb{N}$, $(f^n(x), y)\notin \EvP(X,f)$ and $O$ is an even-splitting neighbourhood of $y$ with regard to $f^n(x)$.
\end{lemma}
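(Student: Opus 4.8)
The plan is to verify directly that $O$ is an even-splitting neighbourhood of $y$ with regard to $f^n(x)$; once this is done the assertion $(f^n(x),y)\notin\EvP(X,f)$ is immediate, since by definition possessing an even-splitting neighbourhood is exactly what it means for a pair to fail to be an even continuity pair (see Equation~(\ref{eqnEvSplit})). Fix $n\in\mathbb{N}$. Unwinding the definition, what has to be shown is: for every $U\in\mathcal{N}_{f^n(x)}$ and every $V\in\mathcal{N}_y$ there is an $m\in\mathbb{N}$ with $f^m(f^n(x))\in V$ and $f^m(U)\not\subseteq O$.

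The idea, modelled on the proof of Lemma~\ref{OrbNonEqPair}, is to transport the problem back to $x$. Given $U\in\mathcal{N}_{f^n(x)}$, set $W:=f^{-n}(U)$; since $f^n$ is continuous and $f^n(x)\in U$, $W$ is a neighbourhood of $x$. Applying Lemma~\ref{RemarkInf3} to the pair $(x,y)$, to the even-splitting neighbourhood $O$, and to the neighbourhoods $W\in\mathcal{N}_x$ and $V\in\mathcal{N}_y$, the set of $k\in\mathbb{N}$ for which $f^k(x)\in V$ and $f^k(W)\not\subseteq O$ is infinite; in particular we may pick such a $k$ with $k>n$.

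It then remains to push forward and shift indices. Put $m:=k-n\in\mathbb{N}$. Then $f^m(f^n(x))=f^k(x)\in V$, and since $f^n(W)=f^n(f^{-n}(U))\subseteq U$ we have $f^k(W)=f^m(f^n(W))\subseteq f^m(U)$, so from $f^k(W)\not\subseteq O$ we conclude $f^m(U)\not\subseteq O$. This produces the required $m$ and finishes the verification. The only point needing a little care is the bookkeeping with exponents — ensuring $k$ is chosen larger than $n$ so that $m=k-n$ is a genuine positive integer, and using the inclusion $f^n(f^{-n}(U))\subseteq U$ to carry the ``not contained in $O$'' clause across the shift; there is no genuine obstacle here, the argument being a routine adaptation of Lemma~\ref{OrbNonEqPair} (which is why, presumably, the paper leaves it for the reader).
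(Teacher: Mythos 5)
Your proof is correct and follows essentially the same route as the paper: pull $U$ back to $W=f^{-n}(U)\in\mathcal{N}_x$, invoke Lemma \ref{RemarkInf3} to get infinitely many suitable times, and choose one exceeding $n$ so the index shift works. The paper's version merely leaves the final bookkeeping ($m=k-n$, $f^k(W)\subseteq f^m(U)$) implicit, which you have spelled out correctly.
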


\begin{proof}
Let $U \in \mathcal{N}_{f^n(x)}$ and $V \in \mathcal{N}_y$. Then $W=f^{-n}(U) \in \mathcal{N}_x$. 
By Lemma \ref{RemarkInf3} the set
\[A= \{k \in \mathbb{N} \mid f^k(x) \in V  \text{ and } f^k(W) \not\subseteq O \},\]
is infinite. Taking $m>n$ with $m \in A$ gives the result.
\end{proof}

\begin{remark}
We emphasise the contrapositive of Lemma \ref{OrbNonEvPair}: 
Let $(X,f)$ be a dynamical system, where $X$ is Hausdorff space. Suppose $(x,y) \in \EvP(X,f)$ and $x \in \Orb(z)$. Then $(z, y)\in \EvP(X,f)$.
\end{remark}

\begin{proposition}
Let $X$ be a Hausdorff space. If $(X,f)$ is a dynamical system and there exists a point $x \in X$ with no even continuity partners, then $(X,f)$ has no equicontinuity pairs. Furthermore, such a point is a transitive point.
\end{proposition}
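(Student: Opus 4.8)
The plan is to dispatch the ``furthermore'' clause first, since it is the key that unlocks the rest, and then read off $\EqP(X,f)=\emptyset$ from the transfer results already established.

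First I would unwind the hypothesis: ``$x$ has no even continuity partners'' says exactly that $(x,y)\notin\EvP(X,f)$ for every $y\in X$. Proposition \ref{prop5.0} asserts that $y\notin\omega(x)$ implies $(x,y)\in\EvP(X,f)$; contrapositively, $(x,y)\notin\EvP(X,f)$ forces $y\in\omega(x)$. Letting $y$ range over all of $X$ yields $\omega(x)=X$, i.e.\ $x$ is a transitive point. This already makes the system transitive: from $\overline{\Orb(x)}=\Orb(x)\cup\omega(x)=X$ one picks, for nonempty open $U,V$, some $m$ with $f^m(x)\in U$ and then (using $\omega(x)=X$) some $n>m$ with $f^n(x)\in V$, so that $n-m\in N(U,V)$; hence $N(U,V)\neq\emptyset$ and transitivity holds (cf.\ Lemma \ref{lemmaEq0.1} and Theorem \ref{ChT}).

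Second, for the main claim I would argue by contradiction. Suppose $(a,b)\in\EqP(X,f)$ for some $a,b\in X$. Since $x$ is a transitive point, $a\in X=\overline{\Orb(x)}$, so Corollary \ref{CorNonTrivEqPairIsTrans} (applied with the roles $y=a$, $z=b$ and the transitive point playing the role of $x$ there) --- equivalently Remark \ref{RemarkEqPairImpliesTransPtHasEqPartner} --- gives $(x,b)\in\EqP(X,f)$. By Remark \ref{Remark5.0} every equicontinuity pair is an even continuity pair, so $(x,b)\in\EvP(X,f)$; thus $b$ is an even continuity partner of $x$, contradicting the hypothesis. Hence $\EqP(X,f)=\emptyset$.

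There is no genuine obstacle here: the statement falls out of the machinery already in place (the trivial-pair characterisation of even continuity, the containment $\EqP\subseteq\EvP$, and the orbit-closure transfer lemma). The one point requiring care is the order of the argument --- one must first extract that $x$ is a transitive point, and hence that $\overline{\Orb(x)}=X$, before invoking Corollary \ref{CorNonTrivEqPairIsTrans} to slide an arbitrary equicontinuity pair $(a,b)$ onto the point $x$. One should also note that Hausdorffness of $X$ is used in both Proposition \ref{prop5.0} and Corollary \ref{CorNonTrivEqPairIsTrans}, but that is among the standing hypotheses, so nothing more is needed.
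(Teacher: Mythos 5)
Your argument is correct, but it is not the route the paper takes for the main claim. The paper's proof is direct: for arbitrary $y,z\in X$ it fixes an even-splitting neighbourhood $O$ of $z$ with regard to $x$ (which exists precisely because $z$ is not an even continuity partner of $x$), notes that transitivity of $x$ places some $f^n(x)$ inside any given neighbourhood $U$ of $y$, and then uses Lemma \ref{OrbNonEvPair} to transfer the even-splitting property forward along the orbit, so that $O$ is even-splitting for $z$ with regard to $f^n(x)$; this produces $m$ with $f^m(U)\cap V\neq\emptyset$ and $f^m(U)\not\subseteq O$, showing $(y,z)\notin\EqP(X,f)$ with an explicit splitting neighbourhood in hand. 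You instead argue by contradiction, transferring a hypothetical equicontinuity pair $(a,b)$ \emph{backwards} onto the transitive point via Corollary \ref{CorNonTrivEqPairIsTrans} (equivalently Remark \ref{RemarkOrbEqPair} or Remark \ref{RemarkEqPairImpliesTransPtHasEqPartner}) and then using the containment $\EqP(X,f)\subseteq\EvP(X,f)$ of Remark \ref{Remark5.0} to contradict the hypothesis on $x$. Both routes rest on the orbit-closure transfer machinery and on Hausdorffness through the cited lemmas; yours is shorter and, usefully, spells out the ``furthermore'' clause via the contrapositive of Proposition \ref{prop5.0} (the paper dismisses this as clear), while the paper's direct argument yields slightly more information, namely a concrete splitting neighbourhood of $z$ with regard to every $y$, rather than the bare nonexistence of equicontinuity pairs.
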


\begin{proof} Note first that such a point is clearly a transitive point.

Let $x\in X$ be a point with no even continuity partners. Let $y,z \in X$ be picked arbitrarily. Let $O$ be an even-splitting neighbourhood of $z$ with regard to $x$. Let $U \in \mathcal{N}_y$ and $V \in \mathcal{N}_z$. As $x$ is transitive there exists $n \in \mathbb{N}$ such that $f^n(x) \in U$. By Lemma \ref{OrbNonEvPair}, $(f^n(x), z)$ is not an even continuity pair and $O$ is an even-splitting neighbourhood of $z$ with regard to $f^n(x)$. It follows that there exists $m \in \mathbb{N}$ such that $f^m(U)\cap V \neq \emptyset$ and $f^m(U) \not\subseteq O$; hence $(y,z) \notin \EqP(X,f)$.
\end{proof}



At the end of the previous section, we asked, in Question \ref{QuestionDoesExistsTransEP}, whether there exists a transitive system with an equicontinuity pair but no point of equicontinuity. We now answer, in the positive, an analogous question with regard to even continuity pairs.

\begin{theorem}\label{ThmExistsTransEvP}
There exists a transitive system $(X,f)$ with a non-trivial even continuity pair but no point of even continuity: Furthermore, there is such a system which is additionally Auslander-Yorke chaotic, densely and strongly Li-Yorke chaotic, but not Devaney chaotic, whilst having no equicontinuity pairs. 
\end{theorem}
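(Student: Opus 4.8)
The plan is to build an explicit shift system, in the spirit of Example \ref{ExampleNonTrivEqPairNoEqPt} and the worked example immediately preceding this theorem, engineered so that (i) long blocks of $0$'s force a non-trivial even continuity pair with the fixed point $0^\infty$, (ii) the richness of the non-zero blocks forces transitivity and sensitivity, (iii) a recurrent scrambled set lying densely in $X$ gives strong and dense Li--Yorke chaos, (iv) the absence of a dense set of periodic points (indeed strong restrictions on periodic points) rules out Devaney chaos, and (v) the presence of an aperiodic ``spine'' point whose orbit visits every non-zero cylinder rules out any equicontinuity pair via Proposition \ref{PE} together with Corollary \ref{CorNonTrivEqPairIsTrans}. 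So the construction is really a careful bookkeeping exercise: take $\Sigma=\{0,1,2,\ldots,m\}$ with $m\geq 2$, enumerate all finite words over $\{1,\ldots,m\}$ as $B_1,B_2,\ldots$, and form a single point $w = B_1 0^{n_1} B_2 0^{n_2} B_3 0^{n_3}\cdots$ with the gap lengths $n_k\to\infty$ chosen fast enough, then let $Y$ consist of $w$, all its shifts, all ``truncations'' $0^j(\text{prefix of }w)0^\infty$, and $0^\infty$, and set $X=\overline{\{\sigma^k(y)\mid y\in Y, k\in\mathbb{N}_0\}}$.

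The key steps, in order, are as follows. First, identify $X$ concretely: describe exactly which sequences lie in $X$ (finite prefixes of $w$ flanked by possibly-infinite runs of $0$'s, plus $w$ itself and its shifts, plus $0^\infty$ and $0^n 1 0^\infty$-type points, etc.), and note $X$ is a shift space with no isolated points, hence sensitive by the remark used in Example \ref{ExampleNonTrivEqPairNoEqPt}. Second, prove transitivity: given two cylinders $[u],[v]$ with $u,v$ words occurring in $X$, use the fact that $w$ (or a shift thereof) contains, as a subword, $u$ followed, after a long enough $0$-gap, by $v$; this is where fast-growing $n_k$ and the enumeration $B_1,B_2,\ldots$ of \emph{all} non-zero words are used, and it simultaneously gives transitivity and, because gaps can be made to grow, infinitely many hitting times (Lemma \ref{lemmaEq0.1}). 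Third, exhibit the non-trivial even continuity pair: mimic the argument that $(x,0^\infty)$ is a non-trivial even continuity pair in the example preceding the theorem --- a point $y$ close to $w$ that maps into a small neighbourhood of $0^\infty$ must (by the structure of $X$, where being deep inside a $0$-run at time $k$ is essentially determined by $k$ alone) have \emph{all} nearby points mapped into $[0^N]$ at that time --- while $(w,0^\infty)\notin\EqP(X,f)$ because arbitrarily close to $w$ there are points mapping exactly onto $0^\infty$ yet $w$ itself is not eventually periodic, so its orbit keeps returning to $[1]\cup\cdots\cup[m]$. Fourth, Li--Yorke: the truncation points $0^j(\text{prefix})0^\infty$ are asymptotic to shifts of $0^\infty$-type points, and one builds an uncountable (Cantor-like) set of sequences built from the blocks $B_k$ with increasingly long synchronized $0$-gaps so that every pair is proximal (agree on long stretches after suitable times) but not asymptotic (disagree on block positions infinitely often), and recurrent in $X\times X$; density of this scrambled set comes from the shifts being dense in $X$. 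Fifth, no equicontinuity pairs: every point of $X$ is either eventually periodic (only the $0^\infty$-eventually-constant points and the $0^n 1 0^\infty$-type points, which are eventually periodic) or has $w$ (or a shift of $w$) in the closure of its orbit; by Proposition \ref{PE} eventually periodic points have no equicontinuity partners in this infinite transitive system, and for the rest one uses that $(x,y)\in\EqP$ would force, via Corollary \ref{CorNonTrivEqPairIsTrans} (since a transitive point lies in... wait --- more precisely via Lemma \ref{OrbNonEqPair}/Remark \ref{RemarkOrbEqPair} applied along the orbit that accumulates on $w$), an equicontinuity partner for $w$, contradicting that $w$, being a non-eventually-periodic transitive-ish point, can be shown to have none by a direct ``$0^\infty$-splitting'' argument as in step three. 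Sixth, not Devaney chaotic: show the only periodic point is $0^\infty$ (no non-zero word repeats periodically forever inside any element of $X$, by construction), so periodic points are not dense.

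The main obstacle is getting a \emph{single} construction to satisfy all the competing demands at once --- in particular reconciling ``no equicontinuity pairs whatsoever'' (which by the results above forces $X$ to have a transitive point all of whose orbit-closure behaviour is ``maximally sensitive'') with ``$(w,0^\infty)$ is a genuine, non-trivial even continuity pair'' (which needs the $0$-runs to be long and to have the rigidity that deep-in-a-$0$-run is a time-determined event). The delicate point is proving that there are no \emph{non-trivial} equicontinuity pairs of the form $(x,0^\infty)$: we must show that no matter how $w$ is built, for every point $x$ with $0^\infty\in\omega(x)$ the pair $(x,0^\infty)$ fails \eqref{eqnEqCondition}, which amounts to the genuinely aperiodic, ``chaotic'' character of the non-zero data in $w$; I expect this to require choosing the enumeration $B_k$ and the gaps $n_k$ with enough care (e.g.\ so that within every window one sees non-zero symbols returning at non-periodic times) and then a quantifier-chasing argument against \eqref{eqn1.1}. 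The Li--Yorke bookkeeping (making the scrambled set simultaneously uncountable, dense, proximal-not-asymptotic, and product-recurrent) is technically the longest part but is routine once the block/gap schedule is fixed; I would set up the schedule once and reuse it for every clause of the theorem.
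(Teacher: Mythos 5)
There is a genuine gap: the construction you describe is not transitive (and has isolated points), so the argument collapses at step two. In your spine $w=B_1\texttt{0}^{n_1}B_2\texttt{0}^{n_2}\cdots$ the maximal zero-runs are pairwise distinct and growing, so the prefix $B_1\texttt{0}^{n_1}\cdots B_k$ never recurs in $w$: after $B_k$ there are exactly $n_k$ zeros and then a nonzero symbol. Consequently the hand-added truncation $t_k=B_1\texttt{0}^{n_1}\cdots B_k\texttt{0}^\infty$ does not lie in $\overline{\Orb(w)}$; the cylinder $[B_1\texttt{0}^{n_1}\cdots B_k\texttt{0}^{n_k+1}]$ meets $X$ in the single point $t_k$, which is therefore isolated, and since the forward orbit of $t_k$ ends at the fixed point $\texttt{0}^\infty$, the set $N(\{t_k\},[B_{k+1}]\cap X)$ is empty for suitable $k$, contradicting transitivity (cf.\ Lemma \ref{lemmaEq0.1}). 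Dropping the truncations does not help: then $w$ itself, being non-recurrent, is isolated in $\overline{\Orb(w)}$, and a dense forward orbit through an isolated non-recurrent point still fails the $N(U,V)\neq\emptyset$ condition (take $V$ a long cylinder meeting $X$ only in $w$). The paper avoids exactly this with the nested device $W_n=W_0\cdots W_{n-1}C_0\cdots C_nQ_{n+1}$: the transitive spine $y$ repeats all earlier data (hence is recurrent) while also embedding arbitrarily long prefixes of the second spine $x=C_0C_1\cdots$ flanked by huge zero blocks, so that every point added by hand lies in $\omega(y)$. Your enumeration-with-growing-gaps schedule has no analogue of this.

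The second missing idea is the mechanism for the even continuity pair and its coexistence with $\EqP(X,f)=\emptyset$. You hang the pair $(w,\texttt{0}^\infty)$ on the same point that generates the dense orbit; but once the construction is repaired so that other orbit points genuinely enter small cylinders around $w$, you must control what all those companions are doing at the arbitrarily late times when $w$ sits deep inside a zero-gap, and nothing in your block/gap schedule provides this. That synchronization is the actual content of the paper's proof: the zero blocks $Q_k$ inside $y$ are sized and positioned, via the length estimates of Lemma \ref{lemmaClaim1} and Corollary \ref{Corollary2ExampleEvP}, so that whenever $\sigma^l(x)$ is in the $\texttt{0}$-part of $C_k$, every orbit point in $[C_0\cdots C_n]$ still has at least as many zeros left to traverse. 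Moreover the paper deliberately decouples the two roles: the even continuity pair lives at the non-transitive point $x$, while the transitive point $y$ is shown directly to have no equicontinuity partners at all (Lemmas \ref{lemmaEx01NotEqP} and \ref{lemmaExEqPEmpty}; indeed $(y,\texttt{0}^\infty)\notin\EvP(X,\sigma)$), which by Remark \ref{RemarkEqPairImpliesTransPtHasEqPartner} gives $\EqP(X,f)=\emptyset$. Your plan needs the single point $w$ to play both roles simultaneously, which is precisely the tension you flag but do not resolve. (Incidentally, the dense and strong Li--Yorke clause needs no bespoke scrambled set: transitivity plus a fixed point plus no isolated points suffices, by the result of de Vries cited in the paper.)
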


Due to the length and technical nature of the proof of Theorem \ref{ThmExistsTransEvP} we leave it until the end of the paper (Section \ref{SectionProofOfExample}).

\begin{remark}
Devaney \cite[pp. 50]{Devaney} defined chaos as a topologically transitive, sensitive system with a dense set of periodic points. This last property means that, ``in the midst of random behaviour, we nevertheless have an element of regularity.'' The construction in the proof of Theorem \ref{ThmExistsTransEvP} shows that a system which is, in some sense, extremely chaotic (it is not only sensitive but expansive, whilst having only two periodic points) can still exhibit some element of regularity: the even continuity pair $(x,\texttt{0}^\infty)$ provides some regularity associated with $x$. When $x$ moves close to $\texttt{0}^\infty$, everything from a certain neighbourhood of $x$ also moves close to $\texttt{0}^\infty$.
\end{remark}

The following corollary is an immediate consequence of Theorem \ref{ThmExistsTransEvP}.

\begin{corollary}
The notions of equicontinuity pair and even continuity pair, in general, remain distinct for transitive dynamical systems.
\end{corollary}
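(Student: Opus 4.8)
The plan is to construct an explicit shift-space system realizing all the required properties simultaneously. The skeleton is already hinted at in the remark following the statement: we want a point $x$ which is not eventually periodic but whose orbit accumulates on the fixed point $\texttt{0}^\infty$, arranged so that $(x,\texttt{0}^\infty)$ is a non-trivial even continuity pair but not an equicontinuity pair, while at the same time the ambient system is sensitive (in fact expansive, which is automatic for a perfect shift space), transitive, has a dense strongly scrambled set, and has only the two periodic points $\texttt{0}^\infty$ and $\texttt{1}^\infty$ (so it is not Devaney chaotic). First I would fix an alphabet $\Sigma = \{\texttt{0},\texttt{1}\}$ and build $x$ by concatenating, in a carefully scheduled order, longer and longer blocks: blocks of the form $\texttt{0}^n$ (to force accumulation at $\texttt{0}^\infty$ and to guarantee that everything near $x$ is eventually dragged near $\texttt{0}^\infty$, giving even continuity) interleaved with an enumeration of all finite words over $\Sigma$ (to force $\omega(x) = X$, i.e. transitivity with $x$ a transitive point).

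The key steps, in order: (1) Define $x$ as an explicit concatenation $x = B_1 B_2 B_3 \cdots$ where the $B_i$ alternate between very long runs $\texttt{0}^{n_i}$ and the $i$-th word $w_i$ in a fixed enumeration of $\Sigma^{*}$, with $n_i$ growing fast enough that each time a run of zeros appears it dominates what came before. Set $X = \overline{\{\sigma^k(y) : y \in Y,\ k \in \mathbb{N}_0\}}$ for an appropriate orbit-closure $Y$ built from $x$, its shifts, and its zero-prefixed relatives, exactly in the style of Example \ref{ExampleNonTrivEqPairNoEqPt} and the example following Remark \ref{Remark5.0}. (2) Verify $x$ is a transitive point: every cylinder $[w]$ is hit by some $\sigma^k(x)$ because every finite word occurs in $x$ by construction, and then $\omega(x) = X$ follows since the occurrences recur infinitely often (because the enumeration repeats words, or one repeats the whole construction). (3) Verify $(x,\texttt{0}^\infty) \in \EvP(X,f)$: given $O = [\texttt{0}^m]$, the long zero-runs in $x$ mean that whenever $\sigma^k(x) \in [\texttt{0}^m]$ we are deep inside a run $\texttt{0}^{n_i}$ with $n_i$ large, and a suitably small cylinder neighbourhood $U$ of $x$ consists of points sharing a long enough prefix that $\sigma^k(U) \subseteq [\texttt{0}^m] \subseteq O$ — this is the even continuity condition; the non-triviality is clear since $\texttt{0}^\infty \in \omega(x)$. (4) Verify $(x,\texttt{0}^\infty) \notin \EqP(X,f)$: arbitrarily close to $x$ there are points of $X$ (namely the $\texttt{0}$-prefixed variants $\texttt{0}^N x'$, or truncations) which are eventually $\texttt{0}^\infty$, so for any neighbourhoods $U \ni x$, $V \ni \texttt{0}^\infty$ we have $f^k(U) \cap V \ne \emptyset$ for all large $k$, yet infinitely often $f^k(x) \in [\texttt{1}]$ (because $x$ is not eventually all zeros), so $f^k(U) \not\subseteq [\texttt{0}]$ — exactly Equation \ref{eqn1.1}. (5) By Proposition \ref{PE} (or directly), since $x$ is a transitive point of an infinite system that is not eventually periodic, no point of $X$ can be a point of even continuity — indeed if some $p$ were an even continuity point then, as $p \in \omega(x)$ and even continuity pairs are preserved along orbits backwards (contrapositive of Lemma \ref{OrbNonEvPair}), $x$ would have an even continuity partner forcing structure incompatible with $(x,\texttt{0}^\infty) \notin \EqP$; more cleanly, one shows $\Ev(X,f) = \emptyset$ using that the system is sensitive and a sensitive system has no even continuity points (by Theorem \ref{EquivalencesOfTopEqAndEq}-type reasoning combined with Lemma \ref{lemma2Omega(x)omega(x)} and the transitivity characterisation). (6) Sensitivity/expansivity is automatic: $X$ is a shift space with no isolated points, hence expansive with expansivity constant corresponding to the $0$-th coordinate, hence sensitive, hence (with transitivity) Auslander–Yorke chaotic. (7) Only two periodic points $\texttt{0}^\infty, \texttt{1}^\infty$: by construction $Y$ contains no other periodic sequences and the shift-orbit-closure of aperiodic concatenations with unbounded zero-runs cannot introduce a new periodic point other than $\texttt{1}^\infty$ if runs of $\texttt{1}$s are also allowed, or just $\texttt{0}^\infty$ if they are not — one tunes the construction so that exactly two periodic points survive, which makes periodic points non-dense, so the system is not Devaney chaotic. (8) Dense strong Li–Yorke chaos: exhibit, inside each cylinder, an uncountable set of points all of which are proximal to each other (they all have arbitrarily long common zero-runs at the same positions) but pairwise non-asymptotic (they differ infinitely often), and which are recurrent in $X^2$; this is a standard Cantor-set-of-sequences argument. (9) No equicontinuity pairs at all: since $x$ is transitive and $(x,\texttt{0}^\infty) \notin \EqP(X,f)$, Theorem \ref{theoremDOHsdoic} gives Hausdorff sensitivity, and more to the point, by Remark \ref{RemarkEqPairImpliesTransPtHasEqPartner} every equicontinuity partner would be an equicontinuity partner of $x$; one then shows $x$ has no equicontinuity partner at all using expansivity (any two distinct points separate under iteration), so $\EqP(X,f) = \emptyset$.

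The main obstacle, and the reason the authors defer the proof to a dedicated section, is the simultaneous balancing act in the construction: the zero-runs must be long enough and frequent enough to produce the even continuity pair and the proximal (Li–Yorke) structure, the enumeration of all finite words must be woven in densely enough to force transitivity, and yet the whole thing must avoid creating any periodic point beyond the two allowed ones and must avoid creating any equicontinuity pair. Getting the growth rates $n_i$ and the precise scheduling of the blocks right — and then checking, cylinder by cylinder, that all nine properties hold with no interference between them — is the genuinely technical heart of the argument; everything else is an application of the lemmas already established (Proposition \ref{prop5.0}, Lemma \ref{OrbNonEvPair}, Lemma \ref{lemma1.0}, Theorem \ref{EquivalencesOfTopEqAndEq}, and the shift-space bookkeeping of Section \ref{SectionShiftSpaces}).
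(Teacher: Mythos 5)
Your overall strategy---a shift space with long \texttt{0}-runs so that $(x,\texttt{0}^\infty)$ is an even continuity pair but not an equicontinuity pair---is the paper's strategy (the corollary is read off immediately from Theorem \ref{ThmExistsTransEvP}, whose proof occupies Section \ref{SectionProofOfExample}), but your construction as specified fails at its central step. If $x$ interleaves an enumeration of \emph{all} finite words over $\Sigma=\{\texttt{0},\texttt{1}\}$, then every binary word occurs in $x$, so $\overline{\Orb_\sigma(x)}$ is the full shift $\Sigma^{\mathbb{N}_0}$ and hence so is $X$. In the full shift your step (3) is false: for $O=[\texttt{0}]$ and any cylinder $U=[x_0\ldots x_N]$, the point $x_0\ldots x_N\texttt{1}^\infty$ lies in $U$, and since $\texttt{0}^\infty\in\omega(x)$ there are visit times $n>N$ with $\sigma^n(x)\in V$, at which $\sigma^n(U)\not\subseteq O$; thus $(x,\texttt{0}^\infty)\notin\EvP(X,\sigma)$. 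Making $x$ simultaneously the transitive point and the even continuity partner of $\texttt{0}^\infty$ is exactly the configuration the paper avoids: in its system the transitive point $y$ provably satisfies $(y,\texttt{0}^\infty)\notin\EvP(X,\sigma)$ (Lemma \ref{lemmaEx01NotEqP}). Your step (7) collapses for the same reason: the full shift has dense periodic points, so the system would be Devaney chaotic after all.

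The missing idea is the separation of roles together with quantitative synchronisation: the paper builds a \emph{non-transitive} point $x=C_0C_1C_2\ldots$ and a distinct transitive point $y=W_0W_1W_2\ldots$ over a very restricted language, with block lengths tuned (Lemma \ref{lemmaClaim1}) so that whenever $x$ is inside the \texttt{0}-part of some $C_k$, every point of $\Orb(y)$ sharing a sufficiently long prefix with $x$ is still inside the all-zero block $Q_k$ (Lemma \ref{lemma2ExampleEvP}, Corollary \ref{Corollary2ExampleEvP}); that is what makes one $U$ work for all times $n$ in the even continuity condition, and it cannot be dismissed as ``choose $n_i$ growing fast enough.'' A secondary flaw: in step (9) you rule out equicontinuity pairs ``using expansivity,'' but expansivity does not preclude non-trivial equicontinuity pairs---Example \ref{ExampleNonTrivEqPairNoEqPt} is an expansive compact shift system with non-trivial equicontinuity pairs---so the paper instead proves $(y,p)\notin\EqP(X,\sigma)$ for every $p$ directly (Lemmas \ref{lemmaEx01NotEqP} and \ref{lemmaExEqPEmpty}) and transfers this to all pairs via Remark \ref{RemarkEqPairImpliesTransPtHasEqPartner}. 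Finally, note that for this corollary you only need a transitive system with one even continuity pair that is not an equicontinuity pair, which is exactly what Theorem \ref{ThmExistsTransEvP} supplies; re-proving all of its properties is unnecessary, and as sketched the construction does not deliver them.
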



The last result in this section is a variation on Proposition \ref{PE}; it gives us some information about the types of pairs which cannot be even continuity pairs in transitive systems.

\begin{proposition}\label{EvenNoPeriodicPt}
Suppose $(X,f)$ is a transitive dynamical system where $X$ is an infinite Hausdorff space. If $x \in X$ is an eventually periodic point then $(x,y)$ is not a non-trivial even continuity pair for any $y \in X$.
\end{proposition}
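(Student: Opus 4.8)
The plan is to mimic the proof of Proposition \ref{PE} almost verbatim, the only change being that one exploits the even continuity condition (Equation \ref{eqnEvCondition}) in place of the topological equicontinuity condition. In fact the argument becomes slightly cleaner, since here the orbit of $x$ genuinely lands on $y$ rather than merely forcing $f^k(U)$ to meet a neighbourhood of $y$.

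First I would suppose, for a contradiction, that $(x,y)$ is a non-trivial even continuity pair; by definition this means $(x,y)\in\EvP(X,f)$ and $y\in\omega(x)$. Since $x$ is eventually periodic, $\Orb(x)$ is finite and $\omega(x)$ is precisely the periodic cycle contained in $\Orb(x)$, so $y$ is periodic; write $y=f^m(x)$ and let $n$ be its period. Replacing $m$ by $m+n$ if necessary, we may assume $m\geq 1$, so that $f^{m+an}(x)=y$ for every $a\in\mathbb{N}_0$. Next, exactly as in Proposition \ref{PE}, I would set up separating neighbourhoods: since $X$ is infinite and $\Orb(x)$ is finite, choose $z\in X\setminus\Orb(x)$; for each $i\in\{0,\dots,n-1\}$ the points $z$ and $f^i(y)$ are distinct, so by Hausdorffness pick disjoint $O_i\in\mathcal{N}_{f^i(y)}$ and $W_i\in\mathcal{N}_z$, and put $O\coloneqq\bigcap_{i=0}^{n-1}f^{-i}(O_i)\in\mathcal{N}_y$ and $W\coloneqq\bigcap_{i=0}^{n-1}W_i\in\mathcal{N}_z$. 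Then $f^i(O)\subseteq f^i(f^{-i}(O_i))\subseteq O_i$ is disjoint from $W_i\supseteq W$ for every $i\in\{0,\dots,n-1\}$.

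Now apply the even continuity condition for $x$, $y$ and $O$ to obtain $U\in\mathcal{N}_x$ and $V\in\mathcal{N}_y$ such that, for all $k\in\mathbb{N}$, $f^k(x)\in V$ implies $f^k(U)\subseteq O$. Since $f^{m+an}(x)=y\in V$ for every $a\in\mathbb{N}_0$, we get $f^{m+an}(U)\subseteq O$ for all $a\in\mathbb{N}_0$; writing an arbitrary $k\geq m$ as $k=m+an+j$ with $a\in\mathbb{N}_0$ and $0\leq j<n$, this gives $f^k(U)=f^j(f^{m+an}(U))\subseteq f^j(O)$, which is disjoint from $W$. Hence $N(U,W)$ is finite, contradicting Lemma \ref{lemmaEq0.1} (transitivity forces $N(U,W)$ to be infinite). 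The only points needing any care are the exponent bookkeeping — ensuring $y$ is written as $f^m(x)$ with $m\geq 1$ so that the even continuity condition, quantified over $\mathbb{N}$, covers all the iterates $f^{m+an}$ — and the routine inclusion $f^i(f^{-i}(O_i))\subseteq O_i$ that makes the sets $f^i(O)$ miss $W$; neither is a genuine obstacle, so the proof is essentially a transcription of that of Proposition \ref{PE} with ``$f^k(U)\cap V\neq\emptyset$'' replaced throughout by ``$f^k(x)\in V$''.
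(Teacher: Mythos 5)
Your proof is correct and is precisely the adaptation of the argument for Proposition \ref{PE} that the paper intends (the paper omits the proof, noting it is ``very similar'' to that of Proposition \ref{PE}), with the non-triviality assumption supplying $y\in\omega(x)$ in place of Corollary \ref{Cor1.Eq} and the even continuity condition applied to the iterates $f^{m+an}(x)=y$. The bookkeeping you flag (taking $m\geq 1$ and the inclusion $f^i(f^{-i}(O_i))\subseteq O_i$) is handled exactly as in the paper's proof of Proposition \ref{PE}, so nothing further is needed.
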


The proof of Proposition \ref{EvenNoPeriodicPt} is very similar to that of Proposition \ref{PE} and is thereby omitted.





\section{Equicontinuity, transitivity and splitting}\label{SectionSplitting}

A subset $N=\{n_1,n_2,n_3, \ldots\} \subseteq \mathbb{N}$, where $n_1 <n_2 <n_3 \ldots$ is said to be {\em syndetic} if there exists $l \in \mathbb{N}$ such that $n_{i+1}-n_i \leq l$; such an $l$ is called a \textit{bound of the gaps}. A subset is called {\em thick} if it contains arbitrarily long strings without gaps. A subset is called cofinite if its complement is finite. Using this, a dynamical system $(X,f)$ is said to be 
\begin{enumerate}
\item Syndetically (resp. thickly) transitive if $N(U,V)$ is syndetic (resp. thick) for any nonempty open $U$ and $V$.
\item Syndetically (resp. thickly / resp. cofinitely) sensitive if there exist a compatible uniformity $\mathscr{D}$ and a symmetric $D \in \mathscr{D}$ such that, for any nonempty open $U \subseteq X$, the set $N_D(U)$ is syndetic (resp. thick / resp. cofinite).
\item Strong mixing if $N(U,V)$ is cofinite for any nonempty open $U$ and $V$.
\end{enumerate}



In this section we investigate the link between topological equicontinuity, transitivity and sensitivity. 
Trivially, if a dynamical system has an equicontinuity point then it is not sensitive. If we restrict our attention to compact metric systems, adding the condition of transitivity is enough to give a partial converse; a transitive map with no equicontinuity points is sensitive \cite{AkinAuslanderBerg}.
The proof provided by Akin \textit{et al} does not rely on the space being metrizable; with only minor adjustments the result generalises to give the following.

\begin{theorem}\textup{\cite{AkinAuslanderBerg}}\label{2.1} Let $(X,f)$ be a dynamical system, where $X$ is a compact Hausdorff space. If there exists a transitive point and $\Eq(X, f)=\emptyset$ then $(X,f)$ is sensitive.
\end{theorem}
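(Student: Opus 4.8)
The plan is to mimic the compact metric argument of Akin, Auslander and Berg, replacing metric balls by entourages of the unique compatible uniformity $\mathscr{D}$ on the compact Hausdorff space $X$. Throughout, fix a transitive point $x_0$ and recall (Theorem \ref{thmEqPtsCoincideWithTransPts} and Lemma \ref{R0.1}) the structural facts we are allowed to use; we argue by contradiction, assuming $(X,f)$ is \emph{not} sensitive, and we want to produce an equicontinuity point, contradicting $\Eq(X,f)=\emptyset$.

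First I would unpack what non-sensitivity gives us. For every symmetric $D\in\mathscr{D}$, choosing a symmetric $E$ with $2E\subseteq D$ (or iterating this a few times to get $3E$ or $4E$ inside $D$, as needed for triangle-inequality-style manoeuvres), failure of $D$ to be a sensitivity entourage means there is a nonempty open set $U_D$ on which the orbit does not $D$-split, i.e. $(f^n(y),f^n(z))\in D$ for all $y,z\in U_D$ and all $n\in\mathbb{N}$; equivalently $\diam f^n(U_D)$ stays $D$-small for all $n$. The key step is then to show that the transitive point $x_0$ — or rather some point in its orbit closure, which by transitivity is all of $X$ — can be pushed into such a ``stable'' open set, and that this stability is preserved along the orbit. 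Concretely, since $x_0$ is transitive, there is $k$ with $f^k(x_0)\in U_D$; I would use continuity of $f^k$ to pull back a neighbourhood $W_D$ of $x_0$ with $f^k(W_D)\subseteq U_D$, so that for all $n\ge k$ the set $f^n(W_D)$ is $D$-small, and then handle the finitely many indices $n<k$ by Corollary \ref{corFiniteTopEqCondition}-style continuity (shrinking $W_D$ further so that $f^n(W_D)$ is $D$-small for $n\in\{0,\dots,k-1\}$ as well). Thus for each $D$ there is a neighbourhood of $x_0$ whose entire forward orbit has $D$-small diameter.

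Having this for every $D\in\mathscr{D}$ would already say $x_0$ is an equicontinuity point (exactly condition \eqref{eqn0.6} localised at $x_0$, since the $D[x]$ form a neighbourhood base), giving the contradiction. So really the argument is: non-sensitivity $+$ transitive point $\Rightarrow$ $x_0\in\Eq(X,f)$. The one subtlety I want to be careful about is the direction of the containment $f^k(W_D)\subseteq U_D$: pulling back an \emph{open} set under the continuous $f^k$ is fine, and transitivity of $x_0$ (i.e. $\omega(x_0)=X$, hence $\overline{\Orb(x_0)}=X$) guarantees the hitting time $k$ exists — here I would invoke Lemma \ref{lemmaEq0.1} / Remark \ref{remarkOmegaTrans} if I need $k$ to be arbitrarily large, though for this argument any single $k$ works. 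I would also double-check that I only need $n\ge 1$ (or $n\ge 0$) and that the finitely-many-small-indices cleanup genuinely uses nothing beyond continuity of $f,f^2,\dots,f^{k-1}$ and finite intersection of neighbourhoods, which is exactly the content already isolated in Lemma \ref{lemmaS} and Corollary \ref{corFiniteTopEqCondition}.

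The main obstacle, and the only place where the metric proof of \cite{AkinAuslanderBerg} needs genuine (if minor) adjustment, is the replacement of the metric diameter estimates and triangle inequalities by entourage compositions: wherever the original proof writes ``$d(f^n(y),f^n(z))<\epsilon$'' and combines two such bounds, I need a symmetric $E$ with $2E\subseteq D$ (or $3E\subseteq D$) and the composition law $E\circ E\subseteq D$, using symmetry $E=E^{-1}$ to reverse pairs. This is routine — it is precisely the translation dictionary spelled out in the paragraph before Lemma \ref{R0.1} and carried out in that lemma's proof — but it is the step that must be written out with care so that no appeal to a metric sneaks in. Everything else (compactness is used only to guarantee the unique uniformity $\mathscr{D}$ exists; Hausdorffness so that $\mathscr{D}$ is separating) transfers verbatim.
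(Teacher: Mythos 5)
Your proposal is correct and is essentially the paper's own route: the paper gives no independent argument for Theorem \ref{2.1}, merely noting that the Akin--Auslander--Berg proof transfers to compact Hausdorff spaces with entourages of the unique uniformity $\mathscr{D}$ in place of $\epsilon$-balls, and that is exactly what you carry out (non-sensitivity yields, for each symmetric $D\in\mathscr{D}$, a nonempty open $U_D$ with all forward images $D$-small; pulling $U_D$ back along the orbit of the transitive point and cleaning up the finitely many initial iterates by continuity shows the transitive point lies in $\Eq(X,f)$, contradicting $\Eq(X,f)=\emptyset$). The only nit: since $N_D(U_D)=\emptyset$ controls only the iterates $f^n(U_D)$ for $n\geq 1$ and not $U_D$ itself, the index $n=k$ should be absorbed into your finite continuity cleanup along with $n<k$, a one-line adjustment.
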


If $X$ is a compact metric space, and $(X,f)$ a transitive dynamical system, then there exists a transitive point (since $X$ is non-meagre and has a countable $\pi$-base
).
By Theorems \ref{2.1} and \ref{ChT} it follows that, for a compact metric system, no equicontinuity pairs implies both transitivity and sensitive dependence on initial conditions.

\begin{corollary}\label{CorIfEPEmptyThenTransAndSens}
Let $X$ be a compact Hausdorff space that is non-meagre and which yields a countable $\pi$-base. If $\EqP(X,f)=\emptyset$ then the system is both transitive and sensitive. 
\end{corollary}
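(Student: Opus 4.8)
The plan is to derive both transitivity and sensitivity from the hypothesis $\EqP(X,f)=\emptyset$ by feeding it into the machinery already developed. First I would obtain transitivity almost for free: by Theorem~\ref{ChT}, $(X,f)$ is transitive if and only if it has no trivial equicontinuity pairs. But every trivial equicontinuity pair is an equicontinuity pair (by Proposition~\ref{prop1.0} and the Remark following it), so $\EqP(X,f)=\emptyset$ forbids trivial equicontinuity pairs in particular; hence $(X,f)$ is transitive. (One should note in passing that $\EqP(X,f)=\emptyset$ presupposes $X\times X\neq\emptyset$, i.e.\ $X\neq\emptyset$, which is needed for this to be meaningful.)

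Next I would produce a transitive point. Since $X$ is a compact Hausdorff space that is non-meagre (Baire second category) and admits a countable $\pi$-base, Proposition~\ref{PropPiBaseIFF} says the countable $\pi$-base is a countable density basis, so by the Akin--Carlson criterion quoted in the preliminaries, topological transitivity is equivalent to the existence of a transitive point. Combined with the transitivity just established, this gives us a transitive point $x\in X$.

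Finally I would invoke Theorem~\ref{2.1}: for a dynamical system on a compact Hausdorff space with a transitive point, if $\Eq(X,f)=\emptyset$ then the system is sensitive. So it remains only to check $\Eq(X,f)=\emptyset$. This follows from Theorem~\ref{EquivalencesOfTopEqAndEq}: since $X$ is compact Hausdorff, a point is an equicontinuity point precisely when the family $\{f^n\}$ is topologically equicontinuous at it, i.e.\ when $(x,y)\in\EqP(X,f)$ for all $y\in X$; if $\EqP(X,f)=\emptyset$ there can be no such point, so $\Eq(X,f)=\emptyset$. Therefore $(X,f)$ is sensitive, and together with transitivity this is the claim.

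I do not expect a genuine obstacle here — the corollary is a direct assembly of Theorem~\ref{ChT}, the $\pi$-base/density-basis equivalence together with the Akin--Carlson transitive-point criterion, Theorem~\ref{EquivalencesOfTopEqAndEq}, and Theorem~\ref{2.1}. The only point requiring a moment's care is the passage $\EqP(X,f)=\emptyset\Rightarrow\Eq(X,f)=\emptyset$, which must go through the compactness hypothesis so that topological equicontinuity at a point coincides with equicontinuity at that point; without compactness this implication could fail, but compactness is in force here.
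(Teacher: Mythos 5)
Your proof is correct and follows essentially the same route as the paper: the paper's own (terse) proof is "apply Theorems \ref{ChT} and \ref{2.1}", with transitivity coming from the absence of (trivial) equicontinuity pairs, the transitive point from the non-meagre/countable $\pi$-base hypothesis via the density-basis equivalence, and $\Eq(X,f)=\emptyset$ from the compact-Hausdorff coincidence of equicontinuity and topological equicontinuity. Your write-up just makes these steps explicit.
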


\begin{proof}
Apply Theorems \ref{2.1} and \ref{ChT}.
\end{proof}



\begin{proposition}
Let $X$ be a Hausdorff space and $(X,f)$ a dynamical system. If $X$ is nonmeagre with a countable $\pi$-base, then $\EqP(X,f) = \emptyset$ if and only if there exists a transitive point $x \in X$ with no equicontinuity partners.
\end{proposition}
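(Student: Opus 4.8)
The plan is to prove the two directions separately, leaning on the machinery already developed. For the reverse direction, suppose there is a transitive point $x$ with no equicontinuity partners, i.e.\ $(x,y)\notin\EqP(X,f)$ for every $y\in X$. I would argue exactly as in the (unnumbered) proposition on points with no \emph{even} continuity partners: given arbitrary $y,z\in X$ and neighbourhoods $U\in\mathcal N_y$, $V\in\mathcal N_z$, pick a splitting neighbourhood $O$ of $z$ with regard to $x$, use transitivity of $x$ to find $n$ with $f^n(x)\in U$, invoke Lemma~\ref{OrbNonEqPair} (with $f^n(x)\in\overline{\Orb(x)}$) to conclude that $(f^n(x),z)\notin\EqP(X,f)$ with the same splitting neighbourhood $O$, and then apply Lemma~\ref{RemarkInf1} to $f^{-n}(U)\ni x$ to produce $m>n$ with $f^m(f^{-n}(U))\cap V\neq\emptyset$ and $f^m(f^{-n}(U))\not\subseteq O$. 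Unwinding gives $f^{m-n}(U)\cap V\neq\emptyset$ and $f^{m-n}(U)\not\subseteq O$, so by Equation~(\ref{eqn1.1}), $(y,z)\notin\EqP(X,f)$. As $y,z$ were arbitrary, $\EqP(X,f)=\emptyset$.

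For the forward direction, assume $\EqP(X,f)=\emptyset$. First, by Theorem~\ref{ChT}, the absence of any equicontinuity pairs in particular rules out \emph{trivial} ones, so $(X,f)$ is transitive. Since $X$ is nonmeagre with a countable $\pi$-base, a $\pi$-base is a density basis by Proposition~\ref{PropPiBaseIFF}, and hence by the Akin--Carlson criterion quoted in the preliminaries, transitivity yields a transitive point $x\in X$. This $x$ automatically has no equicontinuity partners, since $(x,y)\notin\EqP(X,f)$ for every $y$ by hypothesis.

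The one point needing a little care is that the Akin--Carlson equivalence (nonmeagre $+$ countable density basis $\Rightarrow$ transitivity gives a transitive point) was only \emph{stated} in the preliminaries, and it is stated in terms of transitive points being points with dense \emph{orbit}, whereas the paper's working definition of transitive point is $\omega(x)=X$; I would note that these agree here because, once the forward orbit of $x$ is dense, Lemma~\ref{lemmaEq0.1} (infinitude of $N(U,V)$) forces $\omega(x)=X$ as well—indeed this is exactly Remark~\ref{remarkOmegaTrans} applied along the orbit. This is the only genuine subtlety; everything else is a direct citation of Lemmas~\ref{OrbNonEqPair} and~\ref{RemarkInf1}, Theorem~\ref{ChT}, and Proposition~\ref{PropPiBaseIFF}. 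I do not expect any real obstacle.
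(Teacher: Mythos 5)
Your proposal is correct, and your forward direction is essentially the paper's (Theorem \ref{ChT} to get transitivity from $\EqP(X,f)=\emptyset$, then the Akin--Carlson criterion via Proposition \ref{PropPiBaseIFF} to produce a transitive point); in fact your direct appeal to Theorem \ref{ChT} is cleaner than the paper's citation of Corollary \ref{CorIfEPEmptyThenTransAndSens}, whose statement formally assumes compactness, which the present proposition does not. The backward direction is organised differently: the paper argues by contradiction in two lines --- if $(a,b)\in\EqP(X,f)$ then, the system being transitive (it has a transitive point), Theorem \ref{ChT} makes the pair non-trivial, so $b\in\omega(a)$ and $a\in\omega(x)\subseteq\overline{\Orb(x)}$, and Corollary \ref{CorNonTrivEqPairIsTrans} transfers it to $(x,b)\in\EqP(X,f)$, contradicting the choice of $x$ --- whereas you verify directly that every pair $(y,z)$ satisfies Equation (\ref{eqn1.1}), re-running the splitting-neighbourhood argument with Lemmas \ref{OrbNonEqPair} and \ref{RemarkInf1} exactly as in the paper's even-continuity analogue. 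Your route is valid; note only that the splitting neighbourhood $O$ should be fixed before quantifying over $U$ and $V$ (it depends only on $x$ and $z$), so that a single $O$ witnesses the failure for all neighbourhoods, and that once you invoke Lemma \ref{OrbNonEqPair} you could apply Lemma \ref{RemarkInf1} at $f^{n}(x)$ directly, since pulling back to $f^{-n}(U)$ merely re-proves that lemma. The two routes rest on the same machinery (Corollary \ref{CorNonTrivEqPairIsTrans} is the packaged contrapositive of Lemma \ref{OrbNonEqPair}); the paper's is shorter, yours is more self-contained and exhibits the witnessing neighbourhood explicitly. Finally, the dense-orbit versus $\omega(x)=X$ subtlety you raise is genuine (the paper glosses over it), but Remark \ref{remarkOmegaTrans} concerns $\Omega(x)$ rather than $\omega(x)$ and does not literally close it; a correct patch is either to observe that a transitive Hausdorff system with an isolated point is a single finite periodic orbit, while without isolated points a dense orbit must enter every nonempty open set at arbitrarily late times, or simply to run the nonmeagre argument with the open dense sets $\bigcup_{n>N}f^{-n}(V_i)$, with $V_i$ ranging over the countable $\pi$-base, which directly yields a point with $\omega(x)=X$.
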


\begin{proof}
Assume the latter and let $x$ be such a transitive point. Suppose that $(a,b) \in \EqP(X,f)$, for some $a,b \in X$. Then $(a,b)$ is a non-trivial equicontinuity pair by Theorem \ref{ChT}. As $x$ is a transitive point $a \in \omega(x)$. It follows from Corollary \ref{CorNonTrivEqPairIsTrans} that $(x,b) \in \EqP(X,f)$, a contradiction.

Now suppose the former. By Corollary \ref{CorIfEPEmptyThenTransAndSens} the system is transitive, which entails the existence of a transitive point as $X$ is nonmeagre with a countable $\pi$-base.
\end{proof}




We now turn our attention to examining sufficient conditions for $\EqP(X,f)=\emptyset$. One obvious such condition is the following.


\begin{proposition}\label{prop} Let $(X,f)$ be a dynamical system, where $X$ is a Tychonoff space. Suppose there exists a compatible uniformity $\mathscr{D}$ and a symmetric $D \in \mathscr{D}$ such that for any nonempty open sets $U$ and $V$, $N(U,V) \cap N_D(U) \neq \emptyset$, then there are no equicontinuity pairs.
\end{proposition}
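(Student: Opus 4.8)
The plan is to show the contrapositive in spirit: assuming the mixing-type hypothesis, we directly contradict the existence of any equicontinuity pair $(x,y)$. So suppose $(x,y) \in \EqP(X,f)$ and pick a symmetric entourage $E \in \mathscr{D}$ with $2E \subseteq D$. The idea is that topological equicontinuity at $(x,y)$ forces $f^n(U)$ to be ``small'' (contained in $O$) whenever $f^n(U)$ meets $V$, but the hypothesis $N(U,V) \cap N_D(U) \neq \emptyset$ produces an $n$ at which $U$ both hits $V$ and has been stretched by $D$, which will be incompatible once $O$ is chosen small enough.

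First I would fix an appropriate splitting-type neighbourhood of $y$: since we want $f^n(U)$ landing inside it to force a $D$-small image, take $O = E[y]$. By the topological equicontinuity condition (Equation \ref{eqnEqCondition}) there exist $U \in \mathcal{N}_x$ and $V \in \mathcal{N}_y$ such that for all $n$, $f^n(U) \cap V \neq \emptyset \implies f^n(U) \subseteq O = E[y]$; shrinking $V$ we may assume $V \subseteq E[y]$ as well, and we may take $U, V$ to be open (interiors of neighbourhoods). Now apply the hypothesis to this pair of nonempty open sets $U$ and $V$: there is $n \in N(U,V) \cap N_D(U)$. Since $n \in N(U,V)$ we get $f^n(U) \subseteq E[y]$, so for any two points $a,b \in U$ we have $f^n(a), f^n(b) \in E[y]$, hence $(f^n(a), y) \in E$ and $(y, f^n(b)) \in E$ (using symmetry of $E$), so $(f^n(a), f^n(b)) \in 2E \subseteq D$. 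But $n \in N_D(U)$ says precisely that there exist $a, b \in U$ with $(f^n(a), f^n(b)) \notin D$ — a contradiction. Therefore no equicontinuity pair can exist.

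The one technical point to be careful about is that $\EqP$-pairs are defined via neighbourhoods $U \in \mathcal{N}_x$, $V \in \mathcal{N}_y$ which need not be open, whereas the hypothesis $N(U,V) \cap N_D(U) \neq \emptyset$ is stated for nonempty \emph{open} sets; this is handled by passing to the interiors $\operatorname{int} U \ni x$ and $\operatorname{int} V \ni y$, observing that the topological equicontinuity condition for the smaller sets is inherited (since $f^n(\operatorname{int} U) \subseteq f^n(U)$ and $\operatorname{int} U$ still meets any target the way $U$ does when we only need the forward implication) — more precisely, if $f^n(\operatorname{int} U) \cap \operatorname{int} V \neq \emptyset$ then $f^n(U) \cap V \neq \emptyset$, so $f^n(U) \subseteq O$, hence $f^n(\operatorname{int} U) \subseteq O$. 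I do not expect any real obstacle here; the whole argument is a short quantifier chase of exactly the type appearing in the proof of Lemma \ref{R0.1}, with the ``triangle inequality'' replaced by the composition $2E \subseteq D$.
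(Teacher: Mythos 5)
Your proof is correct, and it is essentially the argument the paper has in mind: the paper states Proposition \ref{prop} without a formal proof, only remarking that the hypothesis is the statement $\EqP(X,f)=\emptyset$ with the existential quantifier over $D$ moved to the front, and your contradiction via $O=E[y]$ with a symmetric $E$ satisfying $2E\subseteq D$ (plus passing to interiors to reconcile neighbourhoods with open sets) is exactly the quantifier chase that justifies this. No gaps; the same composition trick already appears in the paper's proof of Lemma \ref{R0.1}, as you note.
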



Note that, when the hypothesis of this proposition occurs, it is equivalent to being able to move the existential quantifier to the front of the statement stating $\EqP(X,f)=\emptyset$. To be clear, $\EqP(X,f)=\emptyset$ means, 
\[ \forall x,y \in X \, \exists D \in \mathscr{D} : \forall U\in \mathcal{N}_x \, \forall V\in \mathcal{N}_y,  N(U,V) \cap N_D(U) \neq \emptyset.\]
whilst the hypothesis states,
\[ \exists D \in \mathscr{D} : \forall x,y \in X \, \forall U\in \mathcal{N}_x \, \forall V\in \mathcal{N}_y, \, N(U,V) \cap N_D(U) \neq \emptyset.\]

For any pair of sets $U,V \subseteq X$, we define $N_D(U,V) \coloneqq N(U,V) \cap N_D(U)$; if $X$ is a metric space and $\delta>0$ we similarly define $N_\delta(U,V) \coloneqq N(U,V) \cap N_\delta(U)$. Such a set is extremely relevant in an applied setting, where small rounding errors mean that a different point than the one intended might be being tracked. This set tells us precisely when $U$ meets $V$ whilst also expanding to at least diameter $\delta$. 
The importance of such a set leads us to give the following definition.

\begin{definition}
Let $(X,f)$ be a dynamical system, where $X$ is a Tychonoff space. We say that $(X,f)$ experiences splitting if there is a compatible uniformity $\mathscr{D}$ and a symmetric $D \in \mathscr{D}$ such that for any pair of nonempty open sets $U$ and $V$ we have $N_D(U,V) \neq \emptyset$. Such a $D$ is called a splitting entourage for $(X,f)$.
\end{definition}
In similar fashion, if $X$ is a metric space we say the system $(X,f)$ has \textit{splitting} if there exists $\delta>0$ such that for any pair of nonempty open sets $U$ and $V$ we have $N_\delta(U,V) \neq \emptyset$. Thus a system has splitting when every nonempty open set `hits' every other such set whilst simultaneously being pulled apart to diameter at least $\delta$. Proposition \ref{prop} then states that any splitting system has no equicontinuity pairs. The following lemma is analogous to several previously stated.

\begin{lemma}\label{RemarkInf2} If $(X,f)$ is a Tychonoff system with splitting, with splitting entourage $D$, then for any nonempty open pair $U$ and $V$, $N_D(U,V)$ is infinite.
\end{lemma}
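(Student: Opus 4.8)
The plan is to imitate the proofs of the analogous finiteness statements, especially Lemma \ref{R0.1} and Lemma \ref{RemarkInf1}, using continuity to ``absorb'' a finite initial segment of hitting-and-splitting times. First I would fix nonempty open sets $U$ and $V$, let $D$ be a splitting entourage, and suppose for contradiction that $N_D(U,V)$ is finite with largest element $k$. I then pick an auxiliary entourage $E\in\mathscr{D}$ with $2E\subseteq D$, choose a point $x\in U$, and use continuity of $f,f^2,\ldots,f^k$ at $x$ to find a symmetric $D_0\in\mathscr{D}$ such that $(x,y)\in D_0$ forces $(f^i(x),f^i(y))\in E$ for all $i\in\{1,\ldots,k\}$. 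Setting $W\coloneqq U\cap D_0[x]$, a nonempty open subset of $U$, I apply the splitting hypothesis to the pair $W$ and $V$ to get some $n\in N_D(W,V)$, i.e.\ $f^n(W)\cap V\neq\emptyset$ and there are $y,z\in W$ with $(f^n(y),f^n(z))\notin D$.

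The key point is then that this $n$ must exceed $k$: for every $i\in\{1,\ldots,k\}$ we have $f^i(W)\subseteq E[f^i(x)]$, so for any $y,z\in W$ the pair $(f^i(y),f^i(z))$ lies in $2E\subseteq D$; hence no $i\le k$ can belong to $N_D(W)$, and a fortiori not to $N_D(W,V)$. (The index $0$ is handled similarly using symmetry of $D_0$ and $E$, or simply disregarded since $N_D$ is indexed by $\mathbb{N}$.) Therefore $n>k$. But $W\subseteq U$ gives $f^n(W)\cap V\subseteq$ ... well, $f^n(W)\cap V\neq\emptyset$ implies $f^n(U)\cap V\neq\emptyset$, so $n\in N(U,V)$; and the splitting pair $y,z\in W\subseteq U$ witnessing $(f^n(y),f^n(z))\notin D$ shows $n\in N_D(U)$. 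Thus $n\in N_D(U,V)$ with $n>k$, contradicting the choice of $k$ as an upper bound. This contradiction establishes that $N_D(U,V)$ is infinite.

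The main obstacle, such as it is, is purely bookkeeping: one must take care that $W$ is genuinely open and nonempty (it contains $x$ and is the intersection of the open set $U$ with the neighbourhood $D_0[x]$, which is a neighbourhood of $x$, so one should either replace $D_0[x]$ by an open set inside it or note that in a Tychonoff space $\mathrm{int}(D_0[x])\ni x$), and that the splitting witnesses $y,z$ indeed lie in $U$ rather than merely in some neighbourhood of $x$. Since the splitting hypothesis is stated for \emph{nonempty open} sets, using the open refinement of $W$ is the cleanest route. Everything else is a verbatim transcription of the entourage-arithmetic already carried out in Lemma \ref{R0.1}, so I would keep the write-up short and simply refer the reader to that proof for the routine parts.
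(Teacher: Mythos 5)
Your proof is correct and is essentially the paper's own argument: shrink $U$ to a nonempty open $W$ on which, by continuity and an entourage $E$ with $2E\subseteq D$, no two points become $D$-separated within the first $k$ iterates, then apply splitting to the pair $(W,V)$ to produce an element of $N_D(U,V)$ exceeding $k$. The only cosmetic difference is that the paper takes $W$ inside $U\cap f^{-k}(V)$, an intersection your version shows is not actually needed.
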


\begin{proof}
Suppose $N_D(U,V)$ is finite. Since $(X,f)$ has splitting, with splitting entourage $D$, $N_D(U,V) \neq \emptyset$. Let $k \in \mathbb{N}$ be the greatest element of $N_D(U,V)$. Let $W \subseteq U \cap f^{-k}(V)$ be open such that, for any $i \in \{1, \ldots, k\}$ and any $x,y \in W$, $(f^i(x),f^i(y)) \in D$. As $N_D(W,V) \neq \emptyset$ and $W \subseteq U$ we have a contradiction and the result follows.
\end{proof}

\begin{corollary}
Let $X$ be a Tychonoff space with at least two points. If $(X,f)$ is weakly mixing, then $(X,f)$ experiences splitting. 
\end{corollary}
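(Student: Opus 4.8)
The plan is to show that a weakly mixing system on a Tychonoff space with at least two points satisfies the hypothesis of Proposition \ref{prop}, i.e.\ produces a single symmetric entourage $D$ that is a splitting entourage. Since $X$ has at least two distinct points $p \neq q$ and $X$ is Tychonoff, the unique-up-to-nothing compatible uniformity $\mathscr{D}$ separates points, so we may choose a symmetric $D \in \mathscr{D}$ with $(p,q) \notin 3D$ (or simply $(p,q)\notin D$ after refining); fix disjoint basic neighbourhoods, say $D_0[p]$ and $D_0[q]$ with $2D_0 \subseteq D$, so that no point of $D_0[p]$ is $D$-close to any point of $D_0[q]$. The claim will be that this $D$ works: for any nonempty open $U, V$ we must find $n$ with $f^n(U)\cap V \neq \emptyset$ and $f^n(U)$ not contained in any $D$-ball, equivalently containing two points at $D$-distance apart.

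First I would invoke the standard fact that weak mixing implies that the product system is transitive and, more usefully, that for any finitely many nonempty open sets the hitting-time sets have a common element; in particular for nonempty open $U$, $V$, $D_0[p]$, $D_0[q]$ one uses transitivity of $f\times f$ applied to the open sets $U\times U$ and $(V\cap A)\times B$ where $A, B$ are suitable open sets. Concretely: since $f$ is transitive (weak mixing implies transitive), pick $a \in U$ with dense orbit-type behaviour is not available in general, so instead I would argue directly with $f\times f$. Choose, by transitivity of $f\times f$, some $n \in \mathbb{N}$ and a pair $(u_1,u_2) \in U\times U$ with $f^n(u_1) \in V \cap D_0[p]$ and $f^n(u_2) \in D_0[q]$ — this is possible because $V \cap$ (something) and $D_0[q]$ are nonempty open, provided we first arrange that the relevant product open set meets $U\times U$ appropriately. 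The key point driving this is Lemma \ref{lemmaEq0.1}: hitting-time sets of a transitive system are infinite, and for weak mixing one gets, via the Furstenberg-type argument, that $N(U_1,V_1)\cap N(U_2,V_2) \neq \emptyset$ for all nonempty open $U_i,V_i$; apply this with $U_1 = U_2 = U$, $V_1 = V \cap D_0[p]$ — wait, this requires $V \cap D_0[p] \neq \emptyset$, which need not hold. So instead apply weak mixing to get $n$ with $f^n(U) \cap D_0[p] \neq \emptyset$ and $f^n(U)\cap V \neq \emptyset$ simultaneously is still not quite enough.

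The cleaner route, which I would actually carry out: weak mixing gives that $N(U, W_1) \cap N(U, W_2) \neq \emptyset$ for \emph{all} nonempty open $W_1, W_2$ (a standard consequence, provable by iterating transitivity of $f \times f$ — see the argument that weak mixing implies all finite products are transitive). Apply this with $W_1 = V$ and $W_2 = D_0[p]$ to obtain $n$ with $f^n(U) \cap V \neq \emptyset$ and $f^n(U) \cap D_0[p] \neq \emptyset$. Now I still need a point of $f^n(U)$ far from $D_0[p]$. For this, apply the three-set version of weak mixing ($N(U,W_1)\cap N(U,W_2)\cap N(U,W_3)\neq\emptyset$, again standard) with $W_1 = V$, $W_2 = D_0[p]$, $W_3 = D_0[q]$, yielding a single $n$ and points $u_1, u_2, u_3 \in U$ with $f^n(u_1) \in V$, $f^n(u_2) \in D_0[p]$, $f^n(u_3) \in D_0[q]$. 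Since $(f^n(u_2), f^n(u_3)) \notin D$ by our choice of $D_0$, $f^n(U)$ contains two points not $D$-close, hence $f^n(U)$ is not contained in any set of the form $D'[z]$ for $D'$ with $2D'\subseteq D$; thus $n \in N_D(U) \cap N(U,V) = N_D(U,V)$, and since $U,V$ were arbitrary, $D$ (after passing from $D_0$ to an appropriate symmetric entourage, e.g.\ $D_0$ itself as the splitting entourage) is a splitting entourage. By Proposition \ref{prop} — or rather directly by the definition of splitting and the remark that splitting systems have no equicontinuity pairs — we conclude $(X,f)$ experiences splitting.

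The main obstacle I anticipate is bookkeeping with the entourages: one must be careful that "$f^n(U)$ contains points at $D$-distance" is exactly the statement "$n \in N_D(U)$" for the \emph{symmetric} entourage in the definition of $N_D$, so I should set up $D$ from the start as a symmetric entourage with a chosen symmetric $D_0$ satisfying $D_0 \circ D_0 \subseteq D$ and $(p,q) \notin D_0 \circ D_0$, guaranteeing that points in the disjoint neighbourhoods $D_0[p]$, $D_0[q]$ witness $N_{D_0}(U) \neq\emptyset$ (taking $D_0$ itself, symmetric, as the splitting entourage). The only genuinely non-routine input is the ``finite products of weakly mixing systems are transitive'' fact used to get a single $n$ hitting three target sets from $U$; since this is classical (and follows by a short induction on transitivity of $f\times f$ in the spirit of Lemma \ref{lemmaEq0.1}), I would cite it or include the two-line induction rather than grind through it.
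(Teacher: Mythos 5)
Your argument is correct in substance, but it takes a genuinely different route from the paper. You reduce the problem to a \emph{three-fold} simultaneous hitting statement, $N(U,W_1)\cap N(U,W_2)\cap N(U,W_3)\neq\emptyset$ with $W_1=V$, $W_2$, $W_3$ small neighbourhoods of two fixed far-apart points $p,q$; this is indeed a standard consequence of weak mixing, but it is Furstenberg's theorem on transitivity of finite products (equivalently, the filter property of the hitting-time sets), an external classical input not proved or cited in the paper, and rather more than a two-line induction from transitivity of $f\times f$. The paper avoids this entirely by a small trick: it fixes $x\in V$ and a point $y$ with $(x,y)$ far apart, and applies transitivity of $f\times f$ \emph{once}, to $U\times U$ and $\left(D[x]\cap V\right)\times D[y]$, so that the same witness $u\in f^n(U)\cap D[x]\cap V$ simultaneously certifies $n\in N(U,V)$ and serves as one of the two $D$-separated points; only two target sets are ever needed, and the proof stays self-contained from the definition of weak mixing. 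Your approach buys a cleaner conceptual picture (one fixed separated pair $p,q$ works uniformly for all $U,V$), at the cost of the extra lemma; the paper's buys brevity and self-containment, at the cost of choosing the reference point $x$ afresh inside each $V$. Two small points to tidy in your write-up: the entourage bookkeeping should be $(p,q)\notin 3D_0$ (with $a\in D_0[p]$, $b\in D_0[q]$ and $(a,b)\in D_0$ one composes three entourages, not two), and the targets $D_0[p]$, $D_0[q]$ are only neighbourhoods, so one should pass to open sets contained in them before invoking transitivity; both are routine fixes and do not affect the argument.
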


\begin{proof} 
Suppose $(X,f)$ exhibits weak mixing. Let $\mathscr{D}$ be a compatible uniformity for $X$ and $E \in \mathscr{D}$ be a symmetric entourage such that, for any $x \in X$, we have $E[x] \neq X$. Let $D \in \mathscr{D}$ be symmetric such that $2D \subseteq E$. Let $U$ and $V$ be nonempty open sets. Let $x \in V$ and pick $y \in X$ such that $(x,y) \notin E$. By weak mixing, there exists $n \in \mathbb{N}$ such that $f^n(U) \cap \left(D[x]\cap V\right) \neq \emptyset$ and $f^n(U) \cap D[y] \neq \emptyset$. Let $u \in f^n(U) \cap \left(D[x]\cap V\right)$ and $u^\prime \in f^n(U) \cap D[y]$; by symmetry $(x,u) \in D$ and $(u^\prime,y) \in D$. If $(u, u^\prime) \in D$ then $(x,y) \in 2D \subseteq E$, a contradiction.
\end{proof}
We remark that if $(X,f)$ is topologically exact or has strong mixing then it has weak mixing; this means each of these properties are also sufficient for a system to have splitting.\footnote{The system $(X,f)$ is topologically exact if, for any nonempty open set $U$ there exists $n \in {\mathbb{N}_0}$ such that $f^n(U)=X$.}

Clearly we also have the following result.

\begin{proposition} Let $(X,f)$ be a dynamical system, with $X$ a Tychonoff space. Let $P$ and $Q$ be properties of subsets $\mathbb{N}$ such that if $A$ and $B$, subsets of $\mathbb{N}$, have $P$ and $Q$ respectively, then $A \cap B \neq \emptyset$. Then if $(X,f)$ is $P$-transitive (by which we mean for any pair of nonempty open sets $U$ and $V$, $N(U,V)$ has property $Q$) and $Q$-sensitive (by which we mean there exist a compatible uniformity $\mathscr{D}$ and a symmetric entourage $D \in \mathscr{D}$ such that for any nonempty open set $U$, $N_D(U)$ has property $P$), then it experiences splitting. 
\end{proposition}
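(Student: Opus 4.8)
The plan is simply to unravel the definitions; this proposition is a recombination of quantifiers rather than a statement requiring genuine work. First I would invoke the hypothesis that $(X,f)$ is $Q$-sensitive to fix a compatible uniformity $\mathscr{D}$ and a symmetric entourage $D \in \mathscr{D}$ with the property that $N_D(U)$ has property $P$ for every nonempty open $U \subseteq X$. The claim is then that this single $D$ is a splitting entourage for $(X,f)$.

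To see this, fix an arbitrary pair of nonempty open sets $U$ and $V$. By the choice of $D$ (i.e. $Q$-sensitivity), $N_D(U)$ has property $P$; by $P$-transitivity, $N(U,V)$ has property $Q$. The hypothesis relating $P$ and $Q$ — that any subset of $\mathbb{N}$ with property $P$ meets any subset with property $Q$ — then forces $N_D(U) \cap N(U,V) \neq \emptyset$, that is, $N_D(U,V) \neq \emptyset$. Since $U$ and $V$ were arbitrary, $(X,f)$ experiences splitting with splitting entourage $D$, as required. (Equivalently, one can observe that the displayed condition just verified is exactly the hypothesis of Proposition \ref{prop}, so the conclusion — no equicontinuity pairs — also follows, though that is not what is asserted here.)

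There is no real obstacle: as the discussion following Proposition \ref{prop} points out, splitting is precisely the assertion that the existential quantifier over $D$ can be pulled to the front, and $Q$-sensitivity hands us such a uniform $D$ outright. The only point demanding care is bookkeeping — making sure that the property attached to the transitivity side ($N(U,V)$) and the one attached to the sensitivity side ($N_D(U)$) are fed into the intersection hypothesis in the correct order, so that one genuinely has ``$A$ has $P$ and $B$ has $Q$ implies $A \cap B \neq \emptyset$'' with $A = N_D(U)$ and $B = N(U,V)$. Once the definitions are aligned, the single intersection statement does all the work.
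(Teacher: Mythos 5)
Your proof is correct and is exactly the definition-unpacking the paper has in mind — indeed the paper offers no proof at all, presenting the proposition as immediate, and your argument (fix the $D$ from the sensitivity hypothesis, then for each pair $U,V$ intersect $N_D(U)$, which has $P$, with $N(U,V)$, which has $Q$) is the intended one. Your remark about keeping the $P$/$Q$ labels straight is also apt, since the statement's naming of ``$P$-transitive'' via property $Q$ is a quirk one must read carefully.
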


For example, if $(X,f)$ is syndetically transitive and thickly sensitive it follows that it has splitting. Also, since transitivity implies $N(U,V)$ is infinite for any nonempty open pair $U$ and $V$,
we have that a transitive system which is cofinitely sensitive has splitting; in particular any transitive map on $[0,1]$ has splitting.\footnote{Any such map is cofinitely sensitive (see \cite{Moothathu}).}

It turns out that any Devaney chaotic system on a compact space has splitting, and consequently has no equicontinuity pairs. We will see that this follows as a corollary to Theorem \ref{thm}.

\begin{theorem}\label{thm} Let $(X,f)$ a syndetically transitive dynamical system, where $X$ is a compact Hausdorff space. If there are two distinct minimal sets then there exists a symmetric entourage $D \in \mathscr{D}$ such that for any nonempty open pair $U$ and $V$, $N_D(U,V)$ is syndetic; i.e. the system experiences \textit{syndetic splitting}. 
\end{theorem}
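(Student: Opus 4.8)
The goal is, given a syndetically transitive system $(X,f)$ on a compact Hausdorff space with two distinct minimal sets $M_1 \neq M_2$, to produce a symmetric entourage $D$ such that $N_D(U,V)$ is syndetic for every nonempty open pair $U,V$. The idea is to use the two minimal sets to manufacture a ``fixed amount of spreading'': pick points $p_i \in M_i$, separate them by an entourage, and exploit the fact that minimal sets are uniformly recurrent, so that forward iterates of a whole neighbourhood of $p_1$ return near $p_1$ syndetically, and similarly for $p_2$. Then, given $U$ and $V$, use syndetic transitivity to drag (an open piece of) $U$ near $p_1$ at a syndetic set of times, and independently near a point whose orbit closure meets $V$, forcing the image of $U$ to contain two points that are far apart while simultaneously meeting $V$.

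\textbf{Key steps.} First, fix the unique uniformity $\mathscr{D}$ on $X$. Choose $p_1 \in M_1$, $p_2 \in M_2$; since $M_1 \cap M_2 = \emptyset$ and $X$ is compact Hausdorff, pick a symmetric $E \in \mathscr{D}$ with $(p_1,p_2) \notin 4E$ (or whatever small multiple the triangle-inequality bookkeeping requires), and set $D$ to be a symmetric entourage with, say, $2D \subseteq E$. Second, recall that each $M_i$ is minimal, hence every point of $M_i$ is uniformly recurrent: the return-time set $N(p_i, D[p_i] \cap M_i)$ is syndetic, and by continuity/compactness one can upgrade this to: there is an open $G_i \ni p_i$ such that $\{n : f^n(G_i) \cap D[p_i] \neq \emptyset\}$ contains a syndetic set, in fact $\{n : f^n(\overline{G_i}) \subseteq E[p_i]\}$ misses only a thick-complement... — more carefully, I would use that $M_i$ being minimal makes $N(G, H)$ syndetic for $G, H$ open meeting $M_i$. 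Third, given nonempty open $U, V$: by syndetic transitivity pick a nonempty open $U_0 \subseteq U$ and a time so that an open subset of $f^{k}(U_0)$ lies inside $G_1$; then for a syndetic set of further times $m$, $f^{k+m}$ of that piece is $E$-close to $p_1$. Simultaneously, using syndetic transitivity of $(X,f)$ applied to $U_0$ (shrunk again) and a set near $M_2$, and then pushing toward $V$, arrange that at a syndetic set of times $f^n(U)$ also meets $V$ and contains a point $E$-close to $p_2$. Intersecting the two syndetic time-sets (syndetic $\cap$ syndetic is syndetic, via a common bound on the gaps) yields a syndetic set of $n$ with $f^n(U) \cap V \neq \emptyset$ and two points of $f^n(U)$ that are $E$-close to $p_1$ and $p_2$ respectively; since $(p_1,p_2) \notin 4E \supseteq 2E$, these two points are not $D$-close, so $n \in N_D(U,V)$.

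\textbf{The main obstacle.} The delicate point is coordinating the two ``spreading'' requirements so that their time-sets are \emph{simultaneously} syndetic and intersect the $N(U,V)$-times, all for one fixed $D$ independent of $U,V$. One must be careful that shrinking $U$ to $U_0$ to control $f^k(U_0) \subseteq G_1$ does not destroy the requirement that $f^n(U)$ (the original $U$) meets $V$ — but this is fine since $U_0 \subseteq U$ implies $f^n(U_0) \subseteq f^n(U)$, so meeting $V$ and spreading can both be witnessed inside $U_0$. The genuinely subtle bookkeeping is the interaction between syndetic transitivity of the global system and the minimality (uniform recurrence) of the $M_i$: I expect one needs a lemma of the form ``if $N(U_0, G_1)$ is syndetic and $G_1$ meets the minimal set $M_1$ on which return times to a fixed neighbourhood of $p_1$ are syndetic with a uniform gap-bound, then $\{n : f^n(U_0) \cap E[p_1] \neq \emptyset\}$ is syndetic.'' Proving that composite is syndetic, rather than merely infinite, with a bound on the gaps that does not depend on $U, V$, is where the real work lies; the rest is triangle-inequality manipulation inside the uniformity.
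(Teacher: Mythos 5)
The decisive step is missing: producing a \emph{single} syndetic set of times $n$ at which $f^n(U)$ both meets $V$ and contains two points that are not $D$-close. The mechanism you propose for this, ``intersecting the two syndetic time-sets (syndetic $\cap$ syndetic is syndetic, via a common bound on the gaps)'', is false: two syndetic sets with the same gap bound can be disjoint (the even and the odd integers, say), so nothing forces your ``near $p_1$'' times to coincide with your ``meets $V$ and near $p_2$'' times. The other load-bearing step is also unjustified: once an open piece of $f^{k}(U_0)$ lies in a neighbourhood $G_1$ of $p_1$, you assert it returns $E$-close to $p_1$ along a syndetic set of further times. Minimality of $M_1$ gives uniform recurrence only for points \emph{of} $M_1$, and no iterate of $U$ need ever meet $M_1$ (transitivity gives intersections with open sets, and $M_1$ may be nowhere dense); points merely close to $M_1$ shadow its orbits for finitely many steps only. (For the weaker statement that $N(U,G)$ is syndetic, where $G$ is an open neighbourhood of $p_1$ inside $E[p_1]$, you need no minimality at all --- that is syndetic transitivity itself --- but it does not resolve the coincidence problem, which you correctly identify as ``where the real work lies'' and then leave open.) A minor further point: the gap bound of $N_D(U,V)$ is allowed to depend on $U$ and $V$; only the entourage $D$ must be uniform.

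The paper closes exactly this gap with a window argument that never intersects syndetic sets and never requires returns to a fixed base point. Choose $D$ symmetric with $(z_1,z_2)\notin 8D$ for all $z_1\in M_1$, $z_2\in M_2$ (separating the whole sets, not just $p_1$ from $p_2$). Given $U,V$, pick $z\in V$ and shrink $V$ into $D[z]$; since $z$ cannot be $4D$-close to both minimal sets, assume every point of $M_1$ is $4D$-far from $z$. Let $l_1$ be a gap bound for $N(U,V)$, fix $x\in M_1$, and use continuity along the finite orbit segment $x,f(x),\dots,f^{l_1}(x)$ to obtain an open $W\ni x$ with $f^i(W)\subseteq D[f^i(x)]$ for $0\le i\le l_1$; then $f^i(W)$ is $2D$-far from every point of $V$ for these $i$. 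Now every $n\in N(U,W)$ (syndetic, with some bound $l_2$ that may depend on $U,V$) admits $m\in N(U,V)$ with $n\le m\le n+l_1$; at time $m$ the point of $U$ that entered $W$ at time $n$ is still $D$-shadowing the orbit of $x$, hence $2D$-far from $V$, while some other point of $f^m(U)$ lies in $V$. So $m\in N_D(U,V)$, and these $m$ have gaps bounded by $l_1+l_2$. The idea you are missing is to fix the transitivity gap bound $l_1$ \emph{first} and then use plain continuity to keep one point of $f^n(U)$ far from $V$ for $l_1$ consecutive steps, so that the next guaranteed visit of $U$ to $V$ occurs while the separation is still in force; only one minimal set is used dynamically, the second serving only to ensure that $V$ may be assumed uniformly far from the first.
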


(NB. The proof below mimics Moothathu's \cite[Theorem 1]{Moothathu} proof that a non-minimal syndetically transitive system has syndetic sensitivity for metric systems.)

\begin{proof}
Let $M_1$ and $M_2$ be distinct minimal sets; it follows that $M_1 \cap M_2 =\emptyset$. Let $x \in M_1$ and $y \in M_2$; so $\overline{\Orb(x)}=M_1$ and $\overline{\Orb(y)}=M_2$. Let $D \in \mathscr{D}$ be symmetric such that, for any $z_1 \in M_1$ and any $z_2 \in M_2$, $(z_1,z_2) \notin 8D$. Now let $U$ and $V$ be nonempty open sets and take $z \in V$; without loss of generality $V \subseteq D[z]$. Suppose there is $p \in M_1$ and $q \in M_2$ such that $(p,z) \in 4D$ and $(z,q) \in 4D$; then $(p,q) \in 8D$, contradicting our choice of $D$. Without loss of generality we may thereby assume $(p,z) \notin 4D$ for any $p \in M_1$. Let $l_1$ be a bound of the gaps for $N(U,V)$. 
Let $W \ni x$ be open such that if $w \in W$ then $(f^i(w),f^i(x)) \in D$ for all $i \in \{0,1,\ldots, l_1\}$; $W$ exists by continuity. By construction, for any $w \in W$, any $v \in V$ and any $i \in \{0,1,\ldots, l_1\}$ we have $(f^i(w), v) \notin 2D$. Let $l_2$ be a bound of the gaps for $N(U,W)$. 
It can now be verified that $N(U,V)\cap N(U,W)$ is itself syndetic, with $l_1+l_2$ a bound of the gaps. 
Since $N_D(U,V) \supseteq N(U,V)\cap N(U,W)$ the result follows.
\end{proof}


The following corollaries follow from Theorem \ref{thm} and Proposition \ref{prop}.

\begin{corollary} Let $(X,f)$ be a syndetically transitive dynamical system, where $X$ is a compact Hausdorff space. If there are two distinct minimal sets then there are no equicontinuity pairs.
\end{corollary}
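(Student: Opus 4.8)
The plan is to obtain this statement as an immediate consequence of Theorem \ref{thm} together with Proposition \ref{prop}. First I would apply Theorem \ref{thm}: since $X$ is compact Hausdorff, $(X,f)$ is syndetically transitive, and there are two distinct minimal sets, that theorem furnishes a symmetric entourage $D$ in the (unique, hence compatible) uniformity $\mathscr{D}$ on $X$ with the property that $N_D(U,V) = N(U,V)\cap N_D(U)$ is syndetic for every pair of nonempty open sets $U,V$. A syndetic subset of $\mathbb{N}$ is in particular nonempty, so this yields $N(U,V)\cap N_D(U)\neq\emptyset$ for all nonempty open $U,V$ — which is exactly the hypothesis of Proposition \ref{prop}. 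Invoking Proposition \ref{prop} then gives $\EqP(X,f)=\emptyset$, as required. Equivalently, Theorem \ref{thm} says the system experiences (syndetic, hence ordinary) splitting, and Proposition \ref{prop} is precisely the assertion that a splitting system has no equicontinuity pairs.

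There is essentially no obstacle here: the entire analytic content has been discharged in the proof of Theorem \ref{thm}, and the step into Proposition \ref{prop} is bookkeeping. The only points worth a remark are that the entourage produced by Theorem \ref{thm} is symmetric (this is part of the statement of that theorem) and that it lies in a compatible uniformity (automatic, since a compact Hausdorff space carries a unique uniformity, which induces its topology); beyond that, one only uses the trivial implication "syndetic $\Rightarrow$ nonempty." Thus the proof is a one-line deduction, and I would present it simply as "Apply Theorems \ref{thm} and \ref{prop}."
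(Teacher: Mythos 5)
Your proof is correct and is exactly the paper's route: the corollary is deduced by combining Theorem \ref{thm} (syndetic splitting, hence splitting) with Proposition \ref{prop} (splitting implies $\EqP(X,f)=\emptyset$), with the compatible symmetric entourage supplied by the unique uniformity on the compact Hausdorff space. Nothing further is needed.
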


\begin{corollary}\label{cor2} Let $(X,f)$ be a non-minimal transitive system with a dense set of minimal points, where $X$ is a compact Hausdorff space. Then 
the system is syndetically splitting.
\end{corollary}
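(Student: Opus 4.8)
The plan is to deduce the result directly from Theorem~\ref{thm}, whose conclusion is exactly that the system is syndetically splitting; so it suffices to verify, under the present hypotheses, the two assumptions of that theorem, namely that $(X,f)$ is syndetically transitive and that it has two distinct minimal sets.

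First I would prove syndetic transitivity. Let $U,V$ be nonempty open sets. By transitivity $N(U,V)\neq\emptyset$, so fixing $m\in N(U,V)$ the set $U_{0}\coloneqq U\cap f^{-m}(V)$ is nonempty and open; by the density of minimal points choose a minimal point $x\in U_{0}$ and let $M=\overline{\Orb(x)}$ be the minimal set it generates. The point of placing $x$ inside $U_{0}$ rather than merely inside $U$ is that then $f^{m}(x)\in V\cap M$, so that $V\cap M$ is a nonempty relatively open subset of $M$. Since $M$ is minimal, every forward orbit in $M$ is dense in $M$, whence $M\subseteq\bigcup_{i\geq 0}f^{-i}(V)$; compactness of $M$ then gives $N\in\mathbb{N}$ with $M\subseteq\bigcup_{i=0}^{N}f^{-i}(V)$. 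As $f^{n}(x)\in M$ for every $n$, for each $n$ there is $i\in\{0,\dots,N\}$ with $f^{n+i}(x)\in V$; hence $N(x,V)$ has bounded gaps, i.e.\ is syndetic. Since $x\in U$ we have $N(x,V)\subseteq N(U,V)$, so $N(U,V)$ is syndetic, as required.

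It remains to exhibit two distinct minimal sets. As $(X,f)$ is non-minimal, $X$ admits a proper nonempty closed positively invariant subset, and a routine Zorn's lemma argument inside that subset (using compactness to keep nested intersections nonempty) produces a minimal set $M_{1}\subsetneq X$. Then $X\setminus M_{1}$ is nonempty and open, so by density it contains a minimal point $z$, and the minimal set $M_{2}\coloneqq\overline{\Orb(z)}$ contains $z\notin M_{1}$; since two minimal sets are either equal or disjoint, $M_{1}\neq M_{2}$. Having checked both hypotheses, Theorem~\ref{thm} yields a symmetric entourage $D\in\mathscr{D}$ with $N_{D}(U,V)$ syndetic for every nonempty open pair $U,V$, which is the assertion. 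The only step requiring a genuine idea is the verification of syndetic transitivity, and there the crux is precisely the choice of the minimal point inside $U\cap f^{-m}(V)$, which forces the minimal (hence syndetically recurrent) orbit through $x$ to meet $V$; everything else is standard compactness and Zorn bookkeeping.
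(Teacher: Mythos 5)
Your proof is correct, and its overall shape is the same as the paper's: both arguments reduce the corollary to Theorem~\ref{thm} by checking its two hypotheses (syndetic transitivity, and the existence of two distinct minimal sets). The difference is in how those hypotheses are discharged. The paper simply cites Moothathu \cite{Moothathu} for the fact that transitivity plus a dense set of minimal points implies syndetic transitivity, and disposes of the second hypothesis with the one-line observation that a non-minimal system with dense minimal points must have more than one minimal set; you instead prove the Moothathu lemma inline (minimal point chosen in $U\cap f^{-m}(V)$, compactness of its orbit closure giving a uniform return bound to $V$, hence $N(x,V)\subseteq N(U,V)$ syndetic) and produce the two minimal sets via Zorn's lemma inside a proper closed invariant subset followed by density. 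Your inline argument has the small advantage of being stated and verified directly in the compact Hausdorff setting, whereas Moothathu's result is formulated for compact metric systems and the paper leaves the (routine) generalisation implicit; conversely, the paper's route is shorter, and its argument for two minimal sets is a bit slicker (if there were a unique minimal set, density of minimal points would force it to be all of $X$, contradicting non-minimality) than the Zorn construction, though both are sound.
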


\begin{proof} Moothathu \cite{Moothathu} shows that a transitive system with a dense set of minimal points is syndetically transitive. If the system is non-minimal but the set of minimal points is dense, there exist multiple minimal sets.
\end{proof}

\begin{corollary} Let $(X,f)$ be a Devaney chaotic dynamical system where $X$ is a compact Hausdorff space. Then $(X,f)$ experiences syndetic splitting.
\end{corollary}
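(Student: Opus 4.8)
The plan is to deduce this immediately from Corollary~\ref{cor2}, which asserts that a non-minimal transitive system with a dense set of minimal points on a compact Hausdorff space is syndetically splitting. By definition a Devaney chaotic system $(X,f)$ is transitive, sensitive and has a dense set of periodic points, so the only things left to verify are that the set of minimal points is dense and that the system is non-minimal.

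Density of the minimal points is free: every periodic point $p$ is a minimal point, since $\overline{\Orb(p)}=\Orb(p)$ is a finite (hence minimal) subsystem. Thus the dense set of periodic points is contained in the set of minimal points, which is therefore itself dense.

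For non-minimality I would argue by contradiction. If $(X,f)$ were minimal, then picking any periodic point $p$ we would get $X=\overline{\Orb(p)}=\Orb(p)$, which is finite; a finite Hausdorff space is discrete, and the only transitive system on a finite discrete space is a single periodic orbit. Such a system is not sensitive: taking $U$ to be a singleton, $f^n(U)$ is a singleton for every $n$, so $N_D(U)=\emptyset$ for every entourage $D$. This contradicts the sensitivity built into Devaney chaos, so $(X,f)$ must be non-minimal.

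Having checked these two points, Corollary~\ref{cor2} applies and yields syndetic splitting. I do not expect any real obstacle here; the only step requiring a moment's thought is the non-minimality argument, i.e.\ the observation that Devaney chaos on a compact Hausdorff space forces the phase space to be infinite and hence the system to fail minimality, and that reduces to the elementary fact that finite systems cannot be sensitive.
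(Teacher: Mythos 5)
Your proposal is correct and follows exactly the route the paper takes: the paper's proof is simply ``this follows from Corollary~\ref{cor2},'' leaving implicit the two facts you verify (periodic points are minimal, so minimal points are dense; and sensitivity rules out minimality, since a minimal system with a periodic point would be a single finite orbit). Your write-up just makes those implicit checks explicit, so there is nothing to correct.
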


\begin{proof} This follows from Corollary \ref{cor2}.
\end{proof}

\begin{corollary}
Let $(X,f)$ exhibit shadowing and chain transitivity, where $X$ is a compact Hausdorff space. If there are two distinct minimal sets then the system has syndetic splitting.
\end{corollary}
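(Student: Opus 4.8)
The plan is to reduce the statement to Theorem~\ref{thm} by showing that \emph{shadowing together with chain transitivity already forces syndetic transitivity}; once that is in hand, the extra hypothesis that there are two distinct minimal sets lets us quote Theorem~\ref{thm} verbatim to conclude that the system has syndetic splitting. (Note that merely knowing the system is transitive would not suffice, since Theorem~\ref{thm} requires the stronger, syndetic, form of transitivity.)

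First I would fix nonempty open sets $U$ and $V$ and aim to prove that $N(U,V)$ is syndetic. Choose $x\in U$, $v\in V$, and a symmetric entourage $E\in\mathscr{D}$ with $E[x]\subseteq U$ and $E[v]\subseteq V$ (obtained by intersecting two entourages from the neighbourhood bases at $x$ and $v$). Let $D\in\mathscr{D}$ be the entourage supplied by the shadowing property for this $E$. By chain transitivity there is a $D$-chain $x=a_0,a_1,\dots,a_s=v$ from $x$ to $v$ and a $D$-chain $v=b_0,b_1,\dots,b_t=x$ from $v$ back to $x$; splicing them at $v$ and at $x$ and iterating produces a periodic $D$-pseudo-orbit $(\xi_k)_{k\ge 0}$ of period $p:=s+t$ with $\xi_0=x$, $\xi_s=v$ and $0<s<p$, where the pseudo-orbit condition at the two splice points is precisely a step of one of the two chains. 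Now let $z\in X$ shadow $(\xi_k)$ with respect to $E$. Then $z\in E[x]\subseteq U$, and for every $j\ge 0$ we have $f^{jp+s}(z)\in E[\xi_{jp+s}]=E[v]\subseteq V$, so $f^{jp+s}(U)\cap V\neq\emptyset$. Hence $\{s,s+p,s+2p,\dots\}\subseteq N(U,V)$; this subset has all consecutive gaps equal to $p$, so the superset $N(U,V)$ has gaps bounded by $p$ and is syndetic. Since $U$ and $V$ were arbitrary, $(X,f)$ is syndetically transitive, and Theorem~\ref{thm} (using the two distinct minimal sets) then yields a symmetric entourage $D'$ with $N_{D'}(U,V)$ syndetic for every nonempty open pair, i.e.\ syndetic splitting.

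I do not expect a serious obstacle: the content is the pseudo-orbit construction above, which is routine once set up correctly. The points that need care are (i) phrasing shadowing and chain transitivity in the uniform rather than the metric setting---but, as emphasised throughout the paper, entourages here simply play the role of the $\epsilon$'s of the metric theory, so the argument transfers verbatim; (ii) checking that the spliced chain genuinely satisfies the $D$-pseudo-orbit condition at both junctions and that $0<s<p$, which holds because $D$-chains have positive length; and (iii) observing that the gap bound $p$ is permitted to depend on $U$ and $V$, as the definition of syndetic transitivity allows. A degenerate case, in which some open set is a singleton, would force $X$ to be finite and discrete, whence chain transitivity collapses to minimality and the hypothesis of two distinct minimal sets is vacuous; it may therefore be dismissed.
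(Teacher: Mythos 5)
Your argument is correct, and it reaches the paper's conclusion by the same overall reduction: establish that the system is syndetically transitive and then invoke Theorem~\ref{thm} with the two distinct minimal sets. The difference is in how the intermediate step is obtained. The paper simply cites Li \cite{Li}, who shows that a non-minimal compact metric system with shadowing and chain transitivity is syndetically transitive, and remarks that this generalises to compact Hausdorff systems; you instead prove the needed syndetic transitivity from scratch, by splicing a $D$-chain from $x$ to $v$ with one from $v$ back to $x$ into a periodic $D$-pseudo-orbit and letting a shadowing point trace it, so that $N(U,V)$ contains the arithmetic progression $\{s+jp : j\ge 0\}$ and hence has gaps bounded by $p$ (a bound which, as you note, is allowed to depend on $U$ and $V$). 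Your route is self-contained, works natively in the uniform (entourage) setting rather than appealing to an ``easy generalisation'' of a metric result, and in fact yields a slightly cleaner intermediate statement: non-minimality is not needed for syndetic transitivity, only for the application of Theorem~\ref{thm}, where it is supplied anyway by the hypothesis of two distinct minimal sets. What the paper's citation buys is brevity; what your argument buys is a complete and checkable proof whose only inputs are the definitions of shadowing, chain transitivity and syndetic sets.
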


\begin{proof}
Li \cite{Li} shows that a non-minimal compact metric system with shadowing and chain transitivity is syndetically transitive; this result generalises easily to compact Hausdorff systems. The result follows from Theorem \ref{thm}.
\end{proof}


\begin{question}\label{QuestionSplittingVsAYChaos}
If $X$ is a Tychonoff space, is splitting distinct from Auslander-Yorke chaos? 
\end{question}
We asked previously (Question \ref{QuestionDoesExistsTransEP}), whether or not a transitive system can have an equicontinuity pair $(x,y)$ without the system being equicontinuous at $x$. A more restrictive question is the following: Is it possible for a transitive point to have an equicontinuity partner but not be an equicontinuity point? 
This itself is related to Question \ref{QuestionSplittingVsAYChaos}. Indeed, if there exists a compact Hausdorff system $(X,f)$, with a transitive point $x \notin \Eq(X,f)$ and a point $y \in X$ with $(x,y) \in \EqP(X,f)$, then it would follow that splitting is not equivalent to Auslander-Yorke chaos; such a system would be both transitive and, since there would be no equicontinuity points by Theorem \ref{thmTopEqPtsCoincideWithTransPts}, sensitive (Theorem \ref{2.1}). However, for any entourage $D \in \mathscr{D}$, there would exist neighbourhoods $U \in \mathcal{N}_x$ and $V \in \mathcal{N}_y$ such that $N_D(U,V)=\emptyset$, hence the system would not have splitting.




    
    

\section{Eventual sensitivity}\label{SectEventualSensitivity}
The following definition was motivated by the following thought: sensitive dependence on initial conditions means that, no matter where you start, there are two points arbitrarily close to each other and to that starting location which will move far apart as time progresses; a universal `far'. Clearly this is extremely relevant in an applied setting; rounding errors mean a computer will not, generally, track true orbits. But what if every point moves arbitrarily close to another point that it will then move away from? What if a computer starts with a true orbit and tracks it accurately, but then the point moves close to another point which will end up going in completely the other direction? - these two points may be so close together that the computer cannot differentiate between them; it may start tracking the wrong orbit and give an extremely inaccurate prediction of the future.


\begin{definition} 
We say a metric dynamical system $(X,f)$ is eventually sensitive if there exists $\delta>0$ such that for any $x \in X$ and any $\epsilon>0$ there exists $n,k \in \mathbb{N}$ and $y \in B_\epsilon(f^n(x))$ such that $d(f^{n+k}(x), f^k(y)) \geq \delta$. We refer to such a $\delta$ as an eventual-sensitivity constant.

If $X$ is a compact Hausdorff space, we say that $(X,f)$ is eventually sensitive if there exists $D \in \mathscr{D}$ such that for any $x \in X$ and any $E \in \mathscr{D}$ there exists $n,k \in \mathbb{N}$ and $y \in E[f^n(x)]$ such that $(f^{n+k}(x),f^k(y)) \notin D$. We refer to such a $D$ as an eventual-sensitivity entourage.

\end{definition}

Clearly a system which is sensitive is also eventually sensitive; just take $n=0$ in the above definition. The variable $n$ is something that needs to be taken into account in an applied setting (and clearly it may depend on one's starting point); if the least such $n$ is large, then the computer may provide an accurate model of the reasonably distant future. However, if the least such $n$ is small, or $0$ as in the case of sensitivity, the orbit the computer is attempting to track may quickly diverge from what the computer predicts. The example below is an example of an eventually sensitive but non-sensitive system.

\begin{example}\label{examplES}
Let $X=[0,1]$. Define a map $f\colon X \to X$ by
\[f(x)=\left\{\begin{array}{lll}
2x & if & x \in [0,\frac{1}{4}],
\\1-2x & if & x \in [\frac{1}{4},\frac{1}{2}] ,
\\ \frac{10}{3}x-\frac{5}{3} & if & x \in [\frac{1}{2},\frac{3}{5}] ,
\\ \frac{1}{3} & if & x \in [\frac{3}{5},\frac{4}{5}] ,
\\ \frac{10}{3}x-\frac{7}{3} & if & x \in [\frac{4}{5},1].
\end{array}\right.\]
Then $f\colon X \to X$ is a continuous surjection which is eventually sensitive but not sensitive.
\end{example}

The system in the example above (\ref{examplES}) is not sensitive; the point $\frac{3}{4}$ has a neighbourhood on which the map is constant. However it is eventually sensitive; to see this notice that every point in $[0,1)$ has its $\omega$-limit set in $[0,\frac{1}{2}]$, where the map is simply a copy of the tent map, which is sensitive (indeed, it is cofinitely so). Finally, $1$ is a fixed point $f(1)=1$, which is of a fixed distance $\frac{1}{2}$ from the interval $[0, \frac{1}{2}]$. Figure \ref{figureES} below shows this map.

\begin{figure}[h]
\centering
\begin{tikzpicture}[scale=4]
\datavisualization [school book axes,
                    visualize as smooth line,
                    y axis={label},
                    x axis={label} ]

data [format=function] {
      var x : interval [0:0.25] samples 30;
      func y = 2* \value x;
      }
data [format=function] {
      var x : interval [0.25:0.5] samples 30;
      func y = 1+(-2) * \value x;
      }
data [format=function] {
      var x : interval [0.5:0.6] samples 30;
      func y = -(5/3)+(10/3) * \value x;
      }
data [format=function] {
      var x : interval [0.6:0.8] samples 30;
      func y = 1/3;
      }
data [format=function] {
      var x : interval [0.8:1.0] samples 30;
      func y = -(7/3)+(10/3) * \value x;
      };
\draw (-0.07,1) -- (0.07,1);      
\node at (-0.1,1) {\footnotesize 1};
\end{tikzpicture}
\caption{A non-sensitive, eventually-sensitive system}
\label{figureES}
\end{figure}
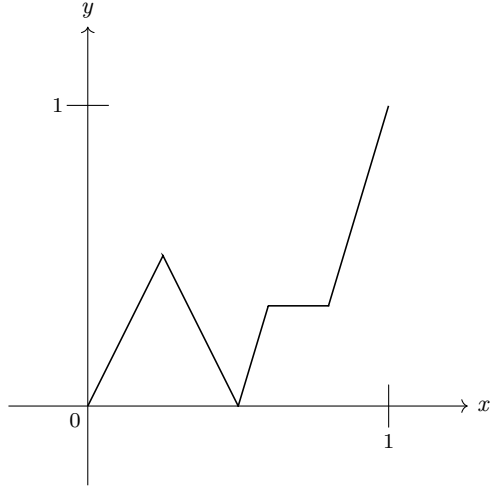

For transitive dynamical systems we prove the following dichotomy.

\begin{theorem}(Generalised Auslander-Yorke Dichotomy II) \label{ESD1} Let $X$ be a compact Hausdorff space. A transitive dynamical system $(X,f)$ is either equicontinuous or eventually sensitive. Specifically, it is eventually sensitive if and only if it is not equicontinuous.
\end{theorem}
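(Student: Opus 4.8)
The plan is to prove the equivalence ``eventually sensitive $\iff$ not equicontinuous'', which immediately gives the stated dichotomy. One implication is routine and needs no transitivity: if $(X,f)$ is equicontinuous then, $X$ being compact, it is uniformly equicontinuous (Equation \ref{eqn0.6}), so for any $D\in\mathscr D$ we may pick $E\in\mathscr D$ with $(p,q)\in E\implies (f^kp,f^kq)\in D$ for all $k$; for this $E$ there is, for every $x$ and every $n$, no $y\in E[f^n(x)]$ and $k$ with $(f^{n+k}(x),f^k(y))\notin D$, so the system fails to be eventually sensitive. For the converse I would show that a transitive, non-equicontinuous system is eventually sensitive. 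Since $X$ is compact, ``not equicontinuous'' means ``not uniformly equicontinuous'', so there is a symmetric $E_0\in\mathscr D$ such that for every symmetric $D\in\mathscr D$ there are $a,b$ with $(a,b)\in D$ and $m\in\mathbb N$ with $(f^m a,f^m b)\notin E_0$; passing to a limit of such pairs (compactness), one obtains a point $w$ with the property that every neighbourhood of $w$ contains a point $w'$ with $(f^m w,f^m w')\notin E_0$ for some $m$. Fix a symmetric $D^{*}$ with $2D^{*}\subseteq E_0$; the claim is that $D^{*}$ is an eventual-sensitivity entourage.

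To verify this, fix $x\in X$ and $E\in\mathscr D$ (we may take $E$ symmetric with $E\subseteq D^{*}$, since exhibiting a valid $y$ for a smaller entourage suffices), and suppose for contradiction that no admissible $n,k,y$ exist. Unwinding the definition, this says that the whole forward orbit of $x$ is \emph{forward $E$-$D^{*}$-stable}: $f^k(E[f^n(x)])\subseteq D^{*}[f^{n+k}(x)]$ for all $n,k\ge 0$. Put $\mathcal O=\bigcup_{n\ge 0}\operatorname{int}\!\big(E[f^n(x)]\big)$, a nonempty open set containing $\overline{\Orb(x)}$; then $f^k(\mathcal O)\subseteq D^{*}[\Orb(x)]$ for all $k$, so every forward image of $\mathcal O$ lies within $D^{*}$ of $\overline{\Orb(x)}$. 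By transitivity and Lemma \ref{lemmaEq0.1}, $N(\mathcal O,V)$ is infinite for every nonempty open $V$, hence $\bigcup_k f^k(\mathcal O)$ is dense in $X$; consequently $X=\overline{D^{*}[\overline{\Orb(x)}]}$, so every point of $X$, in particular $w$, lies within $3D^{*}$ of $\overline{\Orb(x)}$. If $\overline{\Orb(x)}=X$, then $\Orb(x)$ is dense, so some $f^n(x)$ lies within $E/2$ of $w$; choosing $w'$ within $E/2$ of $w$ with $(f^m w,f^m w')\notin E_0$ puts $w,w'\in E[f^n(x)]$, and forward $E$-$D^{*}$-stability at time $n$ forces $(f^{n+m}(x),f^m w)$ and $(f^{n+m}(x),f^m w')$ into $D^{*}$, hence $(f^m w,f^m w')\in 2D^{*}\subseteq E_0$, a contradiction.

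It remains to treat the case $\overline{\Orb(x)}\ne X$. Here $\Omega(x)=X$ by Remark \ref{remarkOmegaTrans} while $\omega(x)\subseteq\overline{\Orb(x)}\ne X$, so by Lemma \ref{lemma2Omega(x)omega(x)} (and the coincidence of topological equicontinuity with equicontinuity on compact Hausdorff spaces) not every point of $\overline{\Orb(x)}$ can lie in $\Eq(X,f)$ — indeed if it did, each such point would be a transitive point, forcing $\overline{\Orb(x)}=X$. So $\overline{\Orb(x)}$ contains some $w_0\notin\Eq(X,f)$; since $w_0\in\overline{\Orb(x)}$ the orbit of $x$ comes arbitrarily close to $w_0$, and one repeats the argument of the previous paragraph using a sensitivity entourage of $w_0$ in place of $E_0$. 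The main obstacle is precisely here: the sensitivity entourage attached to $w_0$ need not contain $2D^{*}$, so the final contradiction does not come for free; reconciling the scales is the delicate step. I would handle it by analysing the subsystem $(\overline{\Orb(x)},f)$, which is transitive with $x$ a transitive point of it: if it is not equicontinuous, Theorem \ref{2.1} gives that it is sensitive, and one compares its sensitivity entourage with $D^{*}$; if it is equicontinuous it is minimal, whence $\omega(x)=\overline{\Orb(x)}\subsetneq X=\Omega(x)$, which combined with Lemma \ref{lemma1Omega(x)omega(x)} pins down the instability that must be present and, after choosing $D^{*}$ appropriately relative to $E_0$, yields the contradiction. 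Having confirmed that $D^{*}$ is an eventual-sensitivity entourage, the theorem follows by combining the two implications.
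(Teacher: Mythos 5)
Your first implication (equicontinuous $\Rightarrow$ not eventually sensitive, via uniform equicontinuity on the compact space) is fine and is exactly the paper's argument. The converse, however, has a genuine gap, and it is the one you yourself flag: your verification that $D^{*}$ is an eventual-sensitivity entourage only goes through for points $x$ with $\overline{\Orb(x)}=X$, and in a transitive compact Hausdorff system most points (possibly all points -- such a system need not have any transitive point) fail this. For such $x$ your sketched repair cannot work as stated: the entourage in the definition of eventual sensitivity must be fixed once, uniformly in $x$, before $x$ is given, whereas every scale you propose to exploit -- the sensitivity entourage of the subsystem $(\overline{\Orb(x)},f)$ supplied by Theorem \ref{2.1}, the separation witnessed by a non-equicontinuity point $w_0\in\overline{\Orb(x)}$, or ``choosing $D^{*}$ appropriately relative to $E_0$'' after analysing $x$ -- depends on the particular $x$ and has no controlled relation to a $D^{*}$ chosen in advance. (Invoking Lemma \ref{lemma1Omega(x)omega(x)} is also not available there, since equicontinuity of $x$ in the subsystem does not make $x\in\Eq(X,f)$.) There are smaller fixable slips too: the compactness limit only yields $w'$ arbitrarily close to $w$ with $(f^m w,f^m w')\notin E_1$ for a halved entourage $2E_1\subseteq E_0$, so $D^{*}$ must be taken with $2D^{*}\subseteq E_1$; but these are cosmetic compared with the missing uniformity.

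The paper obtains the uniform entourage by a different device, which is the key idea absent from your proposal: it splits on whether the set $M$ of minimal points is dense. If $M$ is dense, the system is already sensitive (Auslander--Yorke in the minimal case, Akin--Auslander--Berg for non-minimal transitive systems with dense minimal points), hence eventually sensitive. If $M$ is not dense, fix $q\in X$ and a symmetric $D\in\mathscr{D}$ with $3D[q]\cap\overline{M}=\emptyset$; this single $D$ works for every $z$: $\omega(z)$ contains a minimal point $m$, some $f^n(z)$ is $E$-close to $m$, and transitivity sends some $y\in E[f^n(z)]$ into $D[q]$ after $k$ steps while $f^k(m)$ stays in $\overline{M}$, so $(f^k(y),f^k(m))\notin 2D$ and one of $y,m$ witnesses eventual sensitivity at $z$. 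The uniform scale thus comes from the gap between $q$ and $\overline{M}$ -- a global geometric quantity -- rather than from the failure of equicontinuity at any particular point, which is precisely what your approach lacks.
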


\begin{proof} 
Suppose first that the system is not equicontinuous. Suppose the system has a dense set of minimal points. If the system is minimal then it is sensitive (see \cite[Corollary 2]{AuslanderYorke} or Corollary \ref{GenA-YDichI}) and the result follows. If it is non-minimal then it is sensitive (see \cite[Thereom 2.5]{AkinAuslanderBerg}) and therefore eventually sensitive. Now suppose the set of minimal points $M$ is not dense in $X$. Let $q \in X$ and $D \in \mathscr{D}$ be symmetric such that $3D[q] \cap \overline{M} =\emptyset$. Let $z \in X$ be picked arbitrarily and let $E \in\mathscr{D}$ be given; without loss of generality $E \subset D$. Let $m \in \omega(z)$ be minimal. Then there exists $n \in \mathbb{N}$ such that $m \in E[f^n(z)]$. By transitivity, there exists $k \in \mathbb{N}$ such that $ f^k\left( E[f^n(z)]\right) \cap D[q] \neq \emptyset$. Let $y \in E[f^n(z)]$ be such that $f^k(y) \in D[q]$. Then $(f^k(y),f^k(m))\notin 2D$ as $f^k(m)$ is minimal. Then either $(f^{n+k}(z),f^k(y))\notin D$ or $(f^{n+k}(z),f^k(m))\notin D$. Therefore $(X,f)$ is eventually sensitive.

Now suppose that the system is eventually sensitive; let $D\in \mathscr{D}$ by an eventual-sensitivity entourage. Assume the system is equicontinuous. Since $X$ is compact the system is uniformly equicontinuous. Let $D_0$ be such that for any $x,y \in X$ if $(x,y) \in D_0$ then for any $n\in \mathbb{N}$, $(f^n(x),f^n(y)) \in D$. Let $x\in X$ be given. By eventual sensitivity there exists $n,k \in \mathbb{N}$ and $y \in D_0[f^n(x)]$ such that $(f^{n+k}(x),f^k(y)) \notin D$; this contradicts our assumption that the system is equicontinuous.
\end{proof}

\section{Proof of Theorem \ref{ThmExistsTransEvP}}\label{SectionProofOfExample}

Recursively define the finite words $C_n$ as follows. Let $C_0\coloneqq \texttt{10}$ and, for all $n\geq 1$, take \[C_n\coloneqq \texttt{1}^{8^n\abs{C_0 C_1 \ldots C_{n-1}}}\texttt{0}^{2^n\abs{C_0 {C_1} \ldots C_{n-1}}}.\]
For each $n\geq 1$ define 
\[Q_n\coloneqq \texttt{0}^{8^n\abs{C_0 C_1 \ldots C_{n-1}}}\texttt{0}^{2^n\abs{C_0 C_1 \ldots C_{n-1}}}.\]
Let $W_0 \coloneqq C_0Q_1$ and For each $n \geq 1$ let $W_n\coloneqq W_0W_1\ldots W_ {n-1}C_0C_1\ldots C_n Q_{n+1}$
(so $W_1=W_0C_0C_1Q_2$, $W_2= W_0W_1C_0C_1C_2Q_3$ and so on).

The first $8^n\abs{C_0 C_1 \ldots C_{n-1}}$ symbols of $C_n$ will be referred to as the $\texttt{1}$-part of $C_n$. Similarly, the last $2^n\abs{C_0 C_1 \ldots C_{n-1}}$ symbols of $C_n$ will be referred to as the $\texttt{0}$-part of $C_n$. We will refer to the word $C_0\ldots C_n Q_{n+1}$ as the \textit{closing segment} of $W_n$.

\begin{remark}
For any $n \in \mathbb{N}$, $\abs{C_n}=\abs{Q_n}$. We emphasise that $Q_n$ consists solely of $\texttt{0}$'s.
\end{remark}

To prove Theorem \ref{ThmExistsTransEvP} we will first need to prove the following lemma concerning the length of various words in our system. 

\begin{lemma}\label{lemmaClaim1}
For any $n \in {\mathbb{N}_0}$,
\begin{equation}\label{eqnClaim1}
6\left(8^{n+1}\abs{C_{n+1}}\right) \geq \abs{W_0W_1\ldots W_n C_0 \ldots C_{n+1}}+2\abs{W_n}.
\end{equation}
\end{lemma}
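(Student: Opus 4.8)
The plan is to prove Lemma \ref{lemmaClaim1} by a direct estimate, bounding the right-hand side from above and the left-hand side from below, both expressed in terms of the common quantity $L_n \coloneqq \abs{C_0C_1\ldots C_n}$. First I would record the basic length identities: by definition $\abs{C_{n+1}} = \abs{Q_{n+1}} = (8^{n+1}+2^{n+1})\abs{C_0\ldots C_n} = (8^{n+1}+2^{n+1})L_n$, and more usefully $L_{n+1} = \abs{C_0\ldots C_{n+1}} = L_n + \abs{C_{n+1}} = L_n(1 + 8^{n+1} + 2^{n+1})$. I would also unwind the recursion for the $W_k$: since $W_0 = C_0Q_1$ and $W_n = W_0W_1\ldots W_{n-1}C_0\ldots C_nQ_{n+1}$, we get $\abs{W_n} = \abs{W_0\ldots W_{n-1}} + \abs{C_0\ldots C_n} + \abs{Q_{n+1}} = \abs{W_0\ldots W_{n-1}} + L_n + \abs{C_{n+1}}$. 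Writing $S_n \coloneqq \abs{W_0W_1\ldots W_n}$, this says $S_n = 2S_{n-1} + L_n + \abs{C_{n+1}}$ (with $S_{-1} = 0$), which makes $S_n$ explicitly computable.

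Next I would rewrite inequality \eqref{eqnClaim1}. Its right-hand side is $\abs{W_0\ldots W_n C_0\ldots C_{n+1}} + 2\abs{W_n} = S_n + L_{n+1} + 2\abs{W_n}$. Using $S_n = S_{n-1} + \abs{W_n}$ we can also write this as $S_{n-1} + 3\abs{W_n} + L_{n+1}$. The left-hand side is $6\cdot 8^{n+1}\abs{C_{n+1}} = 6\cdot 8^{n+1}(8^{n+1}+2^{n+1})L_n$. So the target inequality is a polynomial-in-$8^{n+1},2^{n+1}$ inequality once every term is expressed via $L_n$ and the sums $S_{n-1}$. The key quantitative point is that $\abs{C_{n+1}} \approx 8^{n+1}L_n$ dominates $L_n$ by the factor $8^{n+1}$, and in turn $L_{n+1} \approx 8^{n+1}L_n$ as well, while $S_n$ — the accumulated length of all the $W_k$ up to stage $n$ — is, by the doubling recursion, only a bounded multiple of $\abs{W_n}$, and $\abs{W_n}$ itself is comparable to $L_n + \abs{C_{n+1}} \approx 8^{n+1}L_n$. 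Thus everything on the right is $O(8^{n+1}L_n)$ while the left is of order $8^{2(n+1)}L_n$, so for the inequality we really only need the constants to work out, which the generous factor $6\cdot 8^{n+1}$ on the left should guarantee; I would make this precise by induction on $n$, carrying along an auxiliary bound of the shape $S_n \le c\,\abs{C_{n+1}}$ or $\abs{W_n}\le c\,\abs{C_{n+1}}$ for an explicit small constant $c$.

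Concretely, I expect the cleanest route is: (i) prove by induction the auxiliary estimate $\abs{W_n} \le \abs{C_0\ldots C_{n+1}}$, i.e. $\abs{W_n}\le L_{n+1}$, or something slightly stronger like $S_n \le 2L_{n+1}$ — this is where the recursion $S_n = 2S_{n-1} + L_n + \abs{C_{n+1}}$ and the fast growth $L_{n+1} \ge 8^{n+1}L_n \ge 8 L_n$ are used to absorb the doubling; (ii) substitute into the right-hand side of \eqref{eqnClaim1} to bound it by a small multiple of $L_{n+1} \le (1 + 8^{n+1} + 2^{n+1})L_n \le 3\cdot 8^{n+1}L_n$; (iii) compare with the left-hand side $6\cdot 8^{n+1}(8^{n+1}+2^{n+1})L_n \ge 6\cdot 8^{n+1}\cdot 8^{n+1}L_n$, which visibly dominates once $n\ge 0$. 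The main obstacle is purely bookkeeping: getting the auxiliary bound on $S_n$ (equivalently $\abs{W_n}$) with a clean enough constant that step (ii) leaves room to spare against step (iii); the doubling in $S_n = 2S_{n-1} + \cdots$ is the only place where one must be a little careful, and it is controlled precisely because each $L_{n+1}$ is at least $8$ times $L_n$, so the geometric series of accumulated $W$-lengths converges against the current stage with room to spare. No genuinely hard idea is needed; the verification of the base case $n=0$ (where $C_1 = \texttt{1}^{16}\texttt{0}^4$ has length $20$, $\abs{W_0} = \abs{C_0Q_1} = 2 + 20 = 22$, and one checks $6\cdot 8\cdot 20 = 960 \ge 2 + 22 + 2\cdot 22 = 68$) should be immediate and reassuring that the constants are extremely comfortable.
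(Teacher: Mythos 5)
Your plan is correct in substance but takes a genuinely different route from the paper's. The paper proves the displayed inequality itself by induction on $n$: in the inductive step it rewrites the right-hand side as $4\abs{W_0\ldots W_k C_0\ldots C_{k+1}}+4\abs{Q_{k+2}}$, absorbs constants via $2(8^{k+2})\ge 4+4(2^{k+2})$, and only then feeds in the induction hypothesis to compare with $6\bigl(8^{k+2}\abs{C_{k+2}}\bigr)$. You instead push all the induction into a much simpler auxiliary growth bound for your accumulated length $S_n=\abs{W_0\ldots W_n}$ and finish the main inequality by a direct order-of-magnitude comparison (right-hand side $O(8^{n+1}L_n)$ versus left-hand side of order $8^{2(n+1)}L_n$); this makes the role of the $8^{n}$ growth more transparent and leaves generous slack in the constants, at the cost of introducing the explicit recursions for $L_n$ and $S_n$. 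Two caveats. First, of your two candidate auxiliary bounds, $\abs{W_n}\le L_{n+1}$ is actually false for $n\ge 1$ (e.g.\ $\abs{W_1}=22+2+20+1496=1540>1518=L_2$), so you must use the $S_n\le 2L_{n+1}$ version; that one does hold by induction, since $S_n=2S_{n-1}+L_n+\abs{C_{n+1}}$ and $\abs{C_{n+1}}=(8^{n+1}+2^{n+1})L_n\ge 3L_n$, and it suffices: the right-hand side is at most $3S_n+L_{n+1}\le 7L_{n+1}\le 21\cdot 8^{n+1}L_n$, while the left-hand side is at least $6\cdot 8^{n+1}\cdot 8^{n+1}L_n\ge 48\cdot 8^{n+1}L_n$, so the inequality holds for every $n\ge 0$ with room to spare. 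Second, your base-case arithmetic has a slip: $\abs{W_0C_0C_1}+2\abs{W_0}=22+2+20+44=88$, not $68$ (you dropped $\abs{C_1}=20$); this is harmless since $960\ge 88$, and in your scheme the $n=0$ case is in any event covered by the general estimate rather than needed as the base of an induction.
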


\begin{proof}
Let $P(n)$ be the statement
\[6\left(8^{n+1}\abs{C_{n+1}}\right) \geq \abs{W_0W_1\ldots W_n C_0 \ldots C_{n+1}}+2\abs{W_n}.\]

Case when $n=0$. Then $6(8^1\abs{C_1})=960$ whilst $\abs{W_0C_0C_1}+2\abs{W_0}=88$. Hence $P(0)$ holds.

Assume that $P(n)$ is true for all $n \leq k$ for some $k \in {\mathbb{N}_0}$. Will will prove $P(k+1)$ holds. For $P(k+1)$:


\begin{align*}
\text{RHS}&= \abs{W_0W_1\ldots W_k W_{k+1} C_0 \ldots C_{k+1} C_{k+2}} +2\abs{W_{k+1}} 
\\&= \abs{W_0W_1\ldots W_k C_0 \ldots C_{k+1}}+ 3\abs{W_{k+1}} +\abs{C_{k+2}}
\\&= 4\abs{W_0W_1\ldots W_k C_0 \ldots C_{k+1}} + 4\abs{Q_{k+2}} 
\tag*{as $\abs{Q_{k+2}}=\abs{C_{k+2}}$}
\\&= 4\abs{W_0W_1\ldots W_k C_0 \ldots C_{k+1}} + 4\left(8^{k+2}\abs{C_0\ldots C_{k+1}}\right) 
\\&\text{      }\hspace{0.3cm} + 4\left(2^{k+2}\abs{C_0\ldots C_{k+1}}\right) 
\\ &\leq 4\abs{W_0W_1\ldots W_k C_0 \ldots C_{k+1}} 
\\&\text{      } \hspace{0.3cm} + 4\left(8^{k+2}\abs{W_0 W_1 \ldots W_kC_0\ldots C_{k+1}}\right) 
\\&\text{      }\hspace{0.3cm}  + 4\left(2^{k+2}\abs{W_0W_1\ldots W_k C_0\ldots C_{k+1}}\right) 
\\ &\leq 6 \left(8^{k+2}\abs{W_0W_1\ldots W_k C_0\ldots C_{k+1}}\right) 
\tag*{as $2(8^{k+2}) \geq 4+4(2^{k+2})$}
\\ &\leq 6\left(8^{k+2}\big(6\big(8^{k+1}\abs{C_{k+1}}\big)\big)\right) 
\tag*{by the induction hypothesis}
\\ &\leq 6\left(8^{k+2}\big( 8^{k+2}\abs{C_{k+1}}\big)\right)
\\ &\leq 6\left(8^{k+2}\abs{C_{k+2}}\right) 
\tag*{by definition}
\\ &=\text{LHS.} \qedhere
\end{align*}
\end{proof}

\begin{remark}\label{RemarkExLengthOf1Part}
The length of the $\texttt{1}$-part of $C_{n+2}$ is $8^{n+2}\abs{C_0\ldots C_{n+1}}$. Notice that,
\begin{equation*}
8^{n+2}\abs{C_0\ldots C_{n+1}} \geq 6\left(8^{n+1}\abs{C_{n+1}}\right) + 2\left(8^{n+1}\abs{C_{n+1}}\right)
 > 6\left(8^{n+1}\abs{C_{n+1}}\right).
\end{equation*}
The final line is the LHS of Equation \ref{eqnClaim1}. By Lemma \ref{lemmaClaim1} this then means that the length of the $\texttt{1}$-part of $C_{n+1}$ is more than $2\left(8^{n+1}\abs{C_{n+1}}\right)$ greater than \abs{W_0W_1\ldots W_n C_0 \ldots C_{n+1}}+2\abs{W_n}. This observation will prove important later.
\end{remark}

\begin{corollary} For any $n,k \in {\mathbb{N}_0}$,
\begin{equation}\label{eqnCorollaryExampleEvP}
6\left(8^{n+1+k}\abs{C_{n+1+k}}\right) \geq \abs{W_0W_1\ldots W_{n+k} C_0\ldots C_{n+1+k}} +\sum_{i=0} ^{k-1} \abs{W_{n+1+i}}.
\end{equation}
\end{corollary}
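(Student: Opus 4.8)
The plan is to derive the stated inequality \eqref{eqnCorollaryExampleEvP} from Lemma \ref{lemmaClaim1} by induction on $k$, with $n$ fixed (and arbitrary) throughout. The base case $k=0$ is exactly the statement of Lemma \ref{lemmaClaim1} applied with $n$ replaced by $n$ itself: indeed setting $k=0$ collapses the sum $\sum_{i=0}^{k-1}\abs{W_{n+1+i}}$ to the empty sum, which is $0$, and reduces the word $W_0\ldots W_{n+k}C_0\ldots C_{n+1+k}$ to $W_0\ldots W_n C_0\ldots C_{n+1}$, so the inequality becomes $6(8^{n+1}\abs{C_{n+1}}) \geq \abs{W_0\ldots W_n C_0\ldots C_{n+1}}$, which is weaker than what Lemma \ref{lemmaClaim1} gives (the latter even has the extra $+2\abs{W_n}$ on the right). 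So the base case is immediate.

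For the inductive step, I would assume \eqref{eqnCorollaryExampleEvP} holds for some $k\geq 0$ and prove it for $k+1$. The right-hand side for $k+1$ is
\[
\abs{W_0W_1\ldots W_{n+k+1}C_0\ldots C_{n+2+k}} + \sum_{i=0}^{k}\abs{W_{n+1+i}}.
\]
The key observation is that this should be bounded using Lemma \ref{lemmaClaim1} applied at the index $n+1+k$ (i.e.\ with ``$n$'' there taken to be $n+1+k$), which reads
\[
6\left(8^{n+2+k}\abs{C_{n+2+k}}\right) \geq \abs{W_0W_1\ldots W_{n+1+k}C_0\ldots C_{n+2+k}} + 2\abs{W_{n+1+k}}.
\]
So it suffices to check that $\sum_{i=0}^{k}\abs{W_{n+1+i}} \leq 2\abs{W_{n+1+k}}$, i.e.\ that the last term of the sum dominates twice... actually, that the whole sum is at most $2\abs{W_{n+1+k}}$. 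This in turn follows because the lengths $\abs{W_m}$ grow very rapidly: from the recursive definition $W_m = W_0\ldots W_{m-1}C_0\ldots C_m Q_{m+1}$ one has $\abs{W_m} \geq 2\abs{W_0\ldots W_{m-1}} \geq 2(\abs{W_{m-1}} + \abs{W_{m-2}} + \cdots)$, so in particular $\abs{W_{n+1+i}} \leq 2^{-(k-i)}\abs{W_{n+1+k}}$ for $i\leq k$, hence $\sum_{i=0}^{k}\abs{W_{n+1+i}} \leq \abs{W_{n+1+k}}\sum_{j\geq 0} 2^{-j} = 2\abs{W_{n+1+k}}$. Combining the two displayed inequalities gives \eqref{eqnCorollaryExampleEvP} for $k+1$, completing the induction.

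The main obstacle, such as it is, is the auxiliary length-growth estimate $\sum_{i=0}^{k}\abs{W_{n+1+i}}\leq 2\abs{W_{n+1+k}}$: one must extract from the recursive word definitions the fact that each $W_m$ is at least twice the total length of all preceding $W_j$'s (equivalently, $\abs{W_m}\geq 2(\abs{W_0}+\cdots+\abs{W_{m-1}})$), which is clear since $W_m$ literally contains $W_0W_1\ldots W_{m-1}$ as an initial segment and also contains the block $C_0\ldots C_m Q_{m+1}$ whose length is at least $\abs{W_0\ldots W_{m-1}}$ (this last point needs the $Q_{m+1}$ block, whose length is a large multiple of $\abs{C_0\ldots C_m}\geq\abs{W_0\ldots W_{m-1}}$ — or one can just invoke a crude bound that suffices). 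Everything else is bookkeeping: carefully matching up the index shift when invoking Lemma \ref{lemmaClaim1} at stage $n+1+k$, and observing that the word $W_0\ldots W_{n+1+k}C_0\ldots C_{n+2+k}$ appearing there is exactly the word appearing in \eqref{eqnCorollaryExampleEvP} at parameter $k+1$.
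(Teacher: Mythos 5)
Your proposal is correct in substance and follows the same route as the paper, whose proof is literally ``immediate from Lemma \ref{lemmaClaim1}'': one applies the lemma at a shifted index and absorbs the sum of the $\abs{W_j}$ into the $2\abs{W}$ term. Two remarks. First, the induction wrapper is superfluous: your ``inductive step'' never uses the inductive hypothesis --- it is a direct proof of the inequality at parameter $k+1$ from Lemma \ref{lemmaClaim1} at index $n+1+k$ together with the bound $\sum_{i=0}^{k}\abs{W_{n+1+i}}\leq 2\abs{W_{n+1+k}}$, so the same argument proves the statement for every $k$ at once with no induction. Second, the parenthetical justification of your growth estimate contains a false inequality: $\abs{C_0\ldots C_m}\geq\abs{W_0\ldots W_{m-1}}$ fails already at $m=2$, where $\abs{C_0C_1C_2}=1518$ while $\abs{W_0W_1}=1562$. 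The claim you actually want, $\abs{W_m}\geq 2\abs{W_0\ldots W_{m-1}}$, is still true (for $m\geq 1$, Lemma \ref{lemmaClaim1} at index $m-1$ gives $\abs{W_0\ldots W_{m-1}}\leq 6\left(8^{m}\abs{C_m}\right)\leq 8^{m+1}\abs{C_0\ldots C_m}\leq\abs{Q_{m+1}}$, and $Q_{m+1}$ is a block of $W_m$), but, as you yourself anticipate, none of this is needed: since $W_{n+1+k}$ contains $W_0\ldots W_{n+k}$ as an initial segment, $\sum_{i=0}^{k-1}\abs{W_{n+1+i}}\leq\abs{W_0\ldots W_{n+k}}\leq\abs{W_{n+1+k}}$, and adding the final summand $\abs{W_{n+1+k}}$ yields $\sum_{i=0}^{k}\abs{W_{n+1+i}}\leq 2\abs{W_{n+1+k}}$ directly. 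With that repair your argument coincides with the intended one.
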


\begin{proof}
Immediate from Lemma \ref{lemmaClaim1}.
\end{proof}


We now define a shift system $(X,\sigma)$ as follows. Let $x\coloneqq C_0C_1C_2C_3\ldots$ and $y \coloneqq W_0W_1W_2W_3\ldots$. Using the shift map $\sigma$ take 
\[X = \overline{\Orb_\sigma(x)\cup\Orb_\sigma(y) \cup\{\texttt{0}^nx, \texttt{0}^ny \mid n \in \mathbb{N}\}}.\]
Then $y$ is a transitive point in the system $(X,\sigma)$. Notice that $\texttt{0}^\infty,\texttt{10}^\infty \in X$ since they are in $\omega(x)$. We will show through a sequence of lemmas that the system $(X,f)$ satisfies the conditions in the theorem, in particular we will show $(x,\texttt{0}^\infty)$ is a non-trivial even continuity pair but $(x,\texttt{10}^\infty) \notin \EvP(X,f)$.

When working with dynamical systems, it can be helpful to visualise the forward orbit of a point as how it moves through time. In proving our claim we will use language like, `the first time $x$ visits $U \subseteq X$' or `when $x$ enters $U$ for the first time.' By such statements we mean, the least such $c \in {\mathbb{N}_0}$ such that $\sigma^c(x) \in U$. 
In similar fashion, we may speak of points travelling through words. For example, `When $x$ enters the $\texttt{0}$-part of $C_1$ for the first time, $y$ is travelling through $W_0$ for the first time; more specifically, $y$ is travelling through the $Q_1$-part of $Q_1 C_0$ for the first time.' This means that, if $t$ is such that $\sigma^t(x)$ is in the $\texttt{0}$-part of $C_1$ (i.e.\ $\left[\texttt{0}^4\right]$) for the first time, then there exists a unique $a \leq t$ such that $\sigma^a(y)\in W_0$ and $t-a < \abs{W_0}$. Similarly there exists a unique $b \leq t$ such that $\sigma^b(y)\in Q_1 C_0$ and $t-b <\abs{Q_1}$. In this particular example it can be seen that $t=18$, $a=0$ and $b=2$.

We introduce the following, \textit{first-hitting time}, notation. For $w \in X$ and $A \subseteq X$ such that $N(w,A) \neq \emptyset$,
\[\tau(w,A) \coloneqq \min N(w, A).\]
For example, $\tau(x,[C_2])=22$ whilst $\tau(y, [Q_1C_0])=2$. This allows us to translate long-winded sentences such as `$y$ enters $[Q_1C_0]$ for the first time before $x$ enters $[C_2]$ for the first time' into an equation, in this example:
\[\tau(y, [Q_1C_0])< \tau(x,[C_2]).\]


\begin{lemma}
$(x,\textup{\texttt{10}}^\infty) \notin \EvP(X,\sigma)$.
\end{lemma}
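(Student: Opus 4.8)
The plan is to exhibit an even-splitting neighbourhood of $\texttt{10}^\infty$ with regard to $x$; by Equation~(\ref{eqnEvSplit}) it suffices to find some $O \in \mathcal{N}_{\texttt{10}^\infty}$ such that for every $U \in \mathcal{N}_x$ and every $V \in \mathcal{N}_{\texttt{10}^\infty}$ there is an $n$ with $\sigma^n(x) \in V$ while $\sigma^n(U) \not\subseteq O$. The natural candidate is $O = [\texttt{1}]$, the cylinder of points beginning with the symbol $\texttt{1}$. First I would reduce to basic open sets: a neighbourhood $U$ of $x$ contains a cylinder $[C_0 C_1 \ldots C_m]$ for some large $m$ (since $x = C_0C_1C_2\ldots$), and a neighbourhood $V$ of $\texttt{10}^\infty$ contains a cylinder $[\texttt{1}\texttt{0}^N]$ for some large $N$. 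So the goal becomes: for every $m$ and $N$ there exists $n$ with $\sigma^n(x) \in [\texttt{1}\texttt{0}^N]$ and $\sigma^n([C_0\ldots C_m]) \not\subseteq [\texttt{1}]$, i.e. some point of $[C_0\ldots C_m]$ has a $\texttt{0}$ in coordinate $n$.

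Next I would locate the witnessing time $n$. Because $x$ passes through the $\texttt{1}$-part of each $C_{k}$ (which, by the definition $C_k = \texttt{1}^{8^k|C_0\ldots C_{k-1}|}\texttt{0}^{2^k|C_0\ldots C_{k-1}|}$, is long and consists solely of $\texttt{1}$'s), the orbit of $x$ repeatedly visits $[\texttt{1}\texttt{0}^N]$: specifically, just inside the $\texttt{0}$-part of $C_k$ there is a position whose suffix begins $\texttt{1}\texttt{0}^{2^k|C_0\ldots C_{k-1}|}$ — wait, more carefully, the transition from the $\texttt{1}$-part of $C_k$ into its $\texttt{0}$-part gives a coordinate where $x$ reads $\texttt{1}$ followed by at least $2^k|C_0\ldots C_{k-1}|$ zeros, and for $k$ large this exceeds $N$; call this time $n_k$. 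It remains to check that at time $n_k$ the cylinder $[C_0\ldots C_m]$ is not forced to begin with $\texttt{1}$. Here is the crux: the other generating words $W_j$ and the closing segments contain long blocks of $\texttt{0}$'s ($Q_j$ consists solely of $\texttt{0}$'s, and $|Q_j| = |C_j|$), and $x$ itself eventually reads $\texttt{0}$'s in the $\texttt{0}$-part of $C_k$. Since a typical element $w$ of $[C_0\ldots C_m]$ agrees with $x$ only on the first $|C_0\ldots C_m|$ coordinates and is otherwise arbitrary within $X$, I must produce some $w \in X$ beginning with $C_0\ldots C_m$ but having a $\texttt{0}$ in coordinate $n_k$. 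For $k > m$, the point $x$ itself works once $n_k$ is chosen to land on a $\texttt{0}$ of the $\texttt{0}$-part of $C_k$ (which lies beyond coordinate $|C_0\ldots C_m|$), and one slides $n_k$ by at most $|C_k|$ positions to simultaneously keep $\sigma^{n_k}(x)$ in $[\texttt{1}\texttt{0}^N]$ — this is where the length estimates of Lemma~\ref{lemmaClaim1} and Remark~\ref{RemarkExLengthOf1Part}, guaranteeing the $\texttt{1}$-part of $C_k$ is very long, are used.

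The main obstacle I anticipate is the bookkeeping in the previous paragraph: one needs a single time $n$ (depending on $m,N$) that simultaneously (i) sits at a coordinate where $x$ reads $\texttt{1}$ followed by at least $N$ zeros — so deep in a $\texttt{1}$-part of some $C_k$ with $k$ large — and (ii) is such that some $w \in X \cap [C_0\ldots C_m]$ has a $\texttt{0}$ at coordinate $n$. Condition (ii) is automatic if we are willing to take $w = x$ and $n$ large enough that coordinate $n$ of $x$ lies past the prefix $C_0\ldots C_m$ and equals $\texttt{0}$; but $\texttt{1}$-parts are long, so to also satisfy (i) one should instead exploit the closing segments, or take $w = \texttt{0}^j x$ for suitable $j$ (these points are in $X$), which shifts $x$'s zeros into the needed coordinate while the prefix $C_0\ldots C_m$ is short. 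I would formalise this by: fixing $m,N$; choosing $k$ with $2^k|C_0\ldots C_{k-1}| > N$ and $|C_0\ldots C_{k-1}| > |C_0\ldots C_m|$; letting $n$ be the coordinate in $x$ at the last $\texttt{1}$ of the $\texttt{1}$-part of $C_k$, so $\sigma^n(x) \in [\texttt{1}\texttt{0}^N]$; and exhibiting $w \in X$, of the form $\texttt{0}^p\, x'$ for an appropriate suffix $x'$ of the orbit closure, that begins with $C_0\ldots C_m$ yet has $\texttt{0}$ in coordinate $n$. Assembling these pieces shows $[\texttt{1}]$ is an even-splitting neighbourhood, hence $(x,\texttt{10}^\infty) \notin \EvP(X,\sigma)$.
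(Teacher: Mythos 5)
Your overall strategy is the right one (pick a time $n$ at the end of a \texttt{1}-part of some $C_k$, so that $\sigma^n(x) \in V$, while some other point of the cylinder $U=[C_0\ldots C_m]$ reads \texttt{0} at coordinate $n$), and your choice $O=[\texttt{1}]$ would work just as well as the paper's $O=[\texttt{10}]$. But the step you yourself flag as the crux — producing $w \in X \cap [C_0\ldots C_m]$ with a \texttt{0} in coordinate $n$ — is not actually carried out, and the concrete candidates you offer fail. A point of the form $\texttt{0}^j x$ or $\texttt{0}^p x'$ with $j,p\geq 1$ begins with the symbol \texttt{0}, whereas $C_0=\texttt{10}$ begins with \texttt{1}, so such a point can never lie in $[C_0\ldots C_m]$; and taking $w=x$ is impossible outright, since $\sigma^n(x)\in V=[\texttt{1}\texttt{0}^N]$ forces $x_n=\texttt{1}$, contradicting the requirement that $w$ have \texttt{0} at coordinate $n$. ``Sliding $n_k$ by at most $\abs{C_k}$ positions'' cannot rescue this: every coordinate at which $x$ reads \texttt{1} followed by many \texttt{0}'s lies inside a \texttt{1}-part, and no amount of sliding produces a second point of $U$ with a \texttt{0} there unless you exhibit one explicitly.

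The missing ingredient is the specific point supplied by the construction of $y$: the closing segment of $W_m$ is $C_0\ldots C_m Q_{m+1}$, so there is $p\in\Orb(y)$ with $p\in[C_0\ldots C_m Q_{m+1}]$. This $p$ agrees with $x$ on the prefix $C_0\ldots C_m$ (hence $p\in U$), but where $x$ continues with the \texttt{1}-part of $C_{m+1}$, the point $p$ continues with the all-zero block $Q_{m+1}$, which has the same length $\abs{Q_{m+1}}=\abs{C_{m+1}}$. Taking $t=\abs{C_0\ldots C_m}$ and $k=8^{m+1}\abs{C_0\ldots C_m}$ (the length of the \texttt{1}-part of $C_{m+1}$), at time $n=t+k-1$ one has $\sigma^n(x)\in V$ while $\sigma^n(p)$ is still inside $Q_{m+1}$ reading \texttt{0}'s, so $\sigma^n(p)\notin O$. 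Your phrase ``exploit the closing segments'' gestures at exactly this, but without naming $p$ and using the facts that $Q_{m+1}$ consists solely of \texttt{0}'s and has the same length as $C_{m+1}$, the argument does not close; as written, the proposal has a genuine gap at its central step.
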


\begin{proof}
Let $O=[\texttt{10}]$; we claim this is an even-splitting neighbourhood of $\texttt{10}^\infty$ with regard to $x$. Let $U$ and $V$ be neighbourhoods of $x$ and $\texttt{10}^\infty$ respectively. Without loss of generality write $U=[C_0C_1C_2\ldots C_m]$ and $V=[\texttt{10}^l]$, where $m \geq l \geq 1$. There exists a point $p \in \Orb(y)$ such that $p \in [C_0C_1C_2\ldots C_mQ_{m+1}]$. Define $t\coloneqq \abs{C_0C_1 C_2 \ldots C_m}$ and note that $\sigma^t(x) \in [C_{m+1}]$, $\sigma^t(p) \in [Q_{m+1}]$. Let $k=8^{m+1}\abs{C_0C_1C_2\ldots C_m}$; this is the length of the $\texttt{1}$-part of $C_{m+1}$. It follows that 
\[\sigma^{t+k-1}(x) \in V,\]
and
\[\sigma^{t+k-1}(p) \in \left[\texttt{0}^{2^{m+1}}\right].\]
Hence $\sigma^{t+k-1}(p) \notin O$. Since $U$ and $V$ were picked arbitrarily this means $(x,\texttt{10}^\infty) \notin \EvP(X,\sigma)$. In particular $x \notin \Ev(X,\sigma)$.

\end{proof}

\begin{lemma}
There are no points of even continuity.
\end{lemma}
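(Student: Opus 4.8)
The plan is to show that no point of $X$ is an even continuity point by producing, for each $z\in X$, some $w\in X$ with $(z,w)\notin\EvP(X,\sigma)$; since $\sigma$ is evenly continuous at $z$ precisely when $(z,w)\in\EvP(X,\sigma)$ for every $w\in X$, this suffices.

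First I would isolate two structural facts. Since $y$ is a transitive point, $\overline{\Orb_\sigma(y)}=X$, so every finite word occurring as a prefix of some point of $X$ occurs in $y$, and (as $\omega_\sigma(y)=X$) it occurs in $y$ at infinitely many positions. Secondly, a combinatorial argument from the recursive definition of the $C_n$ — whose maximal blocks of $\texttt{1}$'s and $\texttt{0}$'s have lengths tending to infinity — shows that, for each $R$, any subword of $y$ all of whose maximal runs have length at most $R$ has bounded length. Consequently, for every $z\in X$ and every $N$ the point $\sigma^N(z)$ contains a maximal run longer than $R$, so $z$ has arbitrarily long runs occurring arbitrarily far out; hence $\texttt{0}^\infty\in\omega_\sigma(z)$ or $\texttt{1}^\infty\in\omega_\sigma(z)$ for every $z\in X$.

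The heart of the proof is the symmetric pair of claims: (A) $[\texttt{0}]$ is an even-splitting neighbourhood of $\texttt{0}^\infty$ with regard to $z$ whenever $\texttt{0}^\infty\in\omega_\sigma(z)$; (B) $[\texttt{1}]$ is an even-splitting neighbourhood of $\texttt{1}^\infty$ with regard to $z$ whenever $\texttt{1}^\infty\in\omega_\sigma(z)$. Granting (A) and (B), the lemma is immediate from the dichotomy of the previous paragraph. To prove (A): fix such a $z$, and given $U=[z_0\cdots z_{L-1}]\ni z$ and $V=[\texttt{0}^l]\ni\texttt{0}^\infty$, I must find $n$ with $\sigma^n(z)\in V$ and $\sigma^n(U)\not\subseteq[\texttt{0}]$, i.e.\ a position $n$ at which $z$ carries at least $l$ consecutive $\texttt{0}$'s together with a point $u\in U\cap X$ having a $\texttt{1}$ at position $n$. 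Since $\texttt{0}^\infty\in\omega_\sigma(z)$ there are infinitely many candidate positions $n$; the task is to align one of them with an occurrence of $z_0\cdots z_{L-1}$ in $y$ so that an admissible continuation supplies the $\texttt{1}$. This is where one uses the self-similar layout of $y$ — each $W_{k+1}$ begins with a fresh copy of $W_0\cdots W_k$, and each closing segment $C_0\cdots C_k$ is followed by the all-zero block $Q_{k+1}$ whereas the corresponding prefix $C_0\cdots C_kC_{k+1}$ of $x$ carries the long $\texttt{1}$-part of $C_{k+1}$ in exactly that place — together with the length estimates of Lemma \ref{lemmaClaim1} and Remark \ref{RemarkExLengthOf1Part}; the witnessing $u$ is a suitable shift of $y$ (or of $x$). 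Claim (B) is proved in the same way, with the long $\texttt{1}$-parts of the $C_k$ playing the role of the blocks $Q_k$.

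Several cases can be peeled off more cheaply and may be treated first: for $z\in\Orb_\sigma(x)$ one gets $(z,\texttt{10}^\infty)\notin\EvP(X,\sigma)$ at once from the preceding lemma and Lemma \ref{OrbNonEvPair}; for $z$ one of $\texttt{0}^\infty,\texttt{1}^\infty,\texttt{1}^k\texttt{0}^\infty,\texttt{0}^k\texttt{1}^\infty$ (the points of $\omega_\sigma(x)$) one checks directly, using the points $\texttt{0}^nx\in X$ and shifts of $x$ beginning at $\texttt{1}$-runs, that $(z,\texttt{0}^\infty)$ or $(z,\texttt{1}^\infty)$ is not an even continuity pair; and for the transitive points $\sigma^j(y)$ and $\sigma^j(\texttt{0}^ny)$ one invokes case (A) with $\omega_\sigma(z)=X$, using Lemma \ref{RemarkInf3} to push the witnessing time past the initial $\texttt{0}$'s when $z=\texttt{0}^ny$. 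The main obstacle is genuinely (A) (equivalently (B)): making the alignment above precise — controlling where the long runs of an arbitrary $z\in X$ sit relative to the block boundaries of $y$ — is exactly where the length bookkeeping of Lemma \ref{lemmaClaim1} has to be deployed.
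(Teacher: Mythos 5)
There is a genuine problem with the core of your argument. Your claim (A) — that $[\texttt{0}]$ is an even-splitting neighbourhood of $\texttt{0}^\infty$ with regard to \emph{every} $z$ with $\texttt{0}^\infty\in\omega_\sigma(z)$ — is false, and in fact contradicts the main point of the construction: the paper proves (and this is the whole purpose of the example) that $(x,\texttt{0}^\infty)$ \emph{is} an even continuity pair, while $\texttt{0}^\infty\in\omega_\sigma(x)$, so no neighbourhood of $\texttt{0}^\infty$ is even-splitting with regard to $x$; the same applies to the points $\texttt{0}^n x$ by the contrapositive of Lemma \ref{OrbNonEvPair}. Treating $\Orb_\sigma(x)$ "cheaply" via $(x,\texttt{10}^\infty)\notin\EvP(X,\sigma)$ does not repair this, because your deduction "granting (A) and (B), the lemma is immediate from the dichotomy" uses (A) in the stated generality; to salvage the plan you would have to formulate and prove (A) for exactly the class of points where it is true, and identifying that class is essentially the same delicate bookkeeping the paper carries out elsewhere. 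Beyond this, the proposal is a sketch rather than a proof: the dichotomy ($\texttt{0}^\infty\in\omega_\sigma(z)$ or $\texttt{1}^\infty\in\omega_\sigma(z)$ for every $z\in X$, via the bounded-run claim) and the alignment argument at the heart of (A)/(B) are asserted but not established.

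You also missed a much shorter route, which is the one the paper takes: since $X$ is compact Hausdorff, even continuity, topological equicontinuity and equicontinuity coincide (Kelley, Theorem 7.23), so $\Ev(X,\sigma)=\Eq(X,\sigma)$; and since $(X,\sigma)$ is a shift space with no isolated points it is sensitive, whence $\Eq(X,\sigma)=\emptyset$. No pointwise combinatorial analysis of $X$ is needed for this lemma — the hard combinatorics in the paper is reserved for the statements about specific pairs such as $(x,\texttt{0}^\infty)$ and $(y,p)$, where soft arguments do not suffice.
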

\begin{proof}
To see that $\Ev(X,\sigma)=\emptyset$, note that, since $X$ is compact, $\Ev(X,\sigma)=\Eq(X,\sigma)$ (see \cite[Theorem 7.23]{Kelley}). But since $(X,\sigma)$ is a shift space with no isolated points it is sensitive, hence $\Eq(X,\sigma)=\emptyset$.
\end{proof}

We will now set about showing that $(x,\texttt{0}^\infty)$ is a non-trivial even continuity pair. To do this we will need the following lemma. 

\begin{lemma}\label{lemma2ExampleEvP}
Let $n,a \in \mathbb{N}$ be such that $C_0\ldots C_n Q_{n+1}$ is an initial seqment of $z=\sigma^a(y)$. Then for any $k > n$, 
\[
\tau\left(x,[C_k]\right) \leq \tau \left(z, [Q_kC_0]\right) \leq 6\left(8^{k-1}\abs{C_{k-1}}\right)\]
\end{lemma}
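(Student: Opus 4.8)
The plan is to pin down \emph{exactly} the first occurrence in $z$ of the word $Q_kC_0 = \texttt{0}^{\abs{C_k}}\texttt{10}$ (recall $C_0=\texttt{10}$) and then to bound that position using Lemma \ref{lemmaClaim1}. Since the initial segment $C_0C_1\cdots C_nQ_{n+1}$ of $z$ is exactly the closing segment of $W_n$, I would take $a$ to be its position inside $y$, namely $a = 2\abs{W_0\cdots W_{n-1}}$, so that $z = C_0C_1\cdots C_nQ_{n+1}\,W_{n+1}W_{n+2}W_{n+3}\cdots$; I will treat only this $a$, the other admissible values being handled in the same way. Observe also that $\sigma^{\abs{C_0\cdots C_{k-1}}}(x) = C_kC_{k+1}\cdots$, so $\tau(x,[C_k]) \leq \abs{C_0\cdots C_{k-1}}$ for free.

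The combinatorial core is a book-keeping step about runs in $z$: the only runs of $\texttt{1}$'s in $z$ are the $\texttt{1}$-parts of copies of the blocks $C_m$ (each $\texttt{1}$-part being preceded by a $\texttt{0}$), and the only long runs of $\texttt{0}$'s occur inside closing segments $C_0\cdots C_mQ_{m+1}$, where the $\texttt{0}$-part of $C_m$ merges with $Q_{m+1}$. Checking from the definitions that every such run occurring in $z$ before the closing segment of $W_{k-1}$ is entered has $\texttt{1}$-length $< 8^{k-1}\abs{C_0\cdots C_{k-2}}$ and $\texttt{0}$-length $< 2\abs{C_{k-1}} \leq \abs{C_k}$, one concludes that no copy of $C_{k-1}$, and no run of $\abs{C_k}$ zeros, appears before that point; hence the first occurrence of $Q_kC_0$ in $z$ is at the $Q_k$ inside the closing segment $C_0\cdots C_{k-1}Q_k$ of $W_{k-1}$. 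For $k=n+1$ this closing segment is the start of $z$, giving $\tau(z,[Q_{n+1}C_0]) = \abs{C_0\cdots C_n}$; for $k\geq n+2$, reading off positions through $z = (C_0\cdots C_nQ_{n+1})\,W_{n+1}\cdots W_{k-2}\,W_{k-1}\cdots$ and through $W_{k-1} = W_0\cdots W_{k-2}\,C_0\cdots C_{k-1}\,Q_k$ (using $\abs{Q_{n+1}}=\abs{C_{n+1}}$) gives
\[
\tau(z,[Q_kC_0]) = \abs{W_0W_1\cdots W_{k-2}\,C_0C_1\cdots C_{k-1}} + \abs{C_0\cdots C_{n+1}} + \abs{W_{n+1}W_{n+2}\cdots W_{k-2}}.
\]

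Both inequalities then follow. For $\tau(x,[C_k]) \leq \tau(z,[Q_kC_0])$: the $Q_k$ located above is the tail of a copy of $C_0\cdots C_{k-1}Q_k$ sitting inside $z$, so it begins at position $\geq \abs{C_0\cdots C_{k-1}} \geq \tau(x,[C_k])$. For $\tau(z,[Q_kC_0]) \leq 6(8^{k-1}\abs{C_{k-1}})$: the case $k=n+1$ is immediate from $\abs{C_0\cdots C_n} < 2\abs{C_n} \leq 6\cdot 8^n\abs{C_n}$; for $k\geq n+2$, Lemma \ref{lemmaClaim1} applied with $n$ replaced by $k-2$ gives $\abs{W_0\cdots W_{k-2}C_0\cdots C_{k-1}} + 2\abs{W_{k-2}} \leq 6(8^{k-1}\abs{C_{k-1}})$, so it suffices to prove $\abs{C_0\cdots C_{n+1}} + \abs{W_{n+1}\cdots W_{k-2}} \leq 2\abs{W_{k-2}}$. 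Here I would use the recursion $\abs{W_i} = 2\abs{W_{i-1}} + \abs{C_{i+1}}$ (immediate from the definition of $W_i$), which yields $\abs{W_i} > 2\abs{W_{i-1}}$; combined with $\abs{C_0\cdots C_{n+1}} \leq \abs{W_n}$ and $\abs{C_0\cdots C_{n+1}} \leq \abs{W_{n+1}}$, a short induction on the top index of the sum closes the gap.

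The hard part will be the locating step. Proving that $Q_kC_0$ does not occur in $z$ earlier than claimed forces a careful, if elementary, account of the maximal $\texttt{0}$-runs and $\texttt{1}$-runs of $z$ and of precisely which sub-blocks $C_m$, $Q_m$ have been read by each time. Once the position of $Q_kC_0$ is pinned down as that explicit sum of word-lengths, the two inequalities are bookkeeping on top of Lemma \ref{lemmaClaim1} and the recursion $\abs{W_i} = 2\abs{W_{i-1}} + \abs{C_{i+1}}$.
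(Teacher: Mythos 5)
Your analysis of the single suffix $z=C_0\cdots C_nQ_{n+1}W_{n+1}W_{n+2}\cdots$ (the point the paper calls $z^\prime$) is correct and is essentially the paper's own computation: locating the first occurrence of $Q_kC_0$ at the $Q_k$ ending the closing segment of the block $W_{k-1}$, the identity $\abs{W_i}=2\abs{W_{i-1}}+\abs{C_{i+1}}$, and the appeal to Lemma \ref{lemmaClaim1} all check out, and your bookkeeping inequality $\abs{C_0\cdots C_{n+1}}+\abs{W_{n+1}\cdots W_{k-2}}\le 2\abs{W_{k-2}}$ is exactly the slack the paper's estimate uses.

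The genuine gap is the opening reduction. The lemma quantifies over every $a$ for which $C_0\cdots C_nQ_{n+1}$ is an initial segment of $\sigma^a(y)$, and such occurrences sit at the closing segments of all the nested copies of $W_n$ inside $y$, not only the top-level one; the lemma is later applied to arbitrary points of $\Orb(y)$ lying in a given cylinder, so the general $a$ cannot be discarded. Your claim that the other admissible values are ``handled in the same way'' is not accurate: for other $a$ the continuation after the initial closing segment is different (for example, when the copy of $W_n$ is the final prefix block of a copy of $W_{n+1}$, the initial segment is followed by $C_0\cdots C_{n+1}Q_{n+2}$ rather than by a full block $W_{n+1}$), so your exact formula for $\tau(z,[Q_kC_0])$ fails and the hitting time is in general strictly smaller. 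What is true, and what must actually be proved, is that your chosen $z$ is the worst case, i.e. $\tau(\sigma^a(y),[Q_kC_0])\le\tau(z,[Q_kC_0])$ for every admissible $a$; this comparison is precisely the step the paper supplies (matching the words appearing in a general suffix before its first $Q_kC_0$ against those of $z^\prime$), and an alternative is to track through the nesting structure that from any such $a$ the first completed copy of $W_{k-1}$ ends within $\abs{C_0\cdots C_nQ_{n+1}}+\sum_{i=n+1}^{k-1}\abs{W_i}$ symbols, since at each level the continuation is either a full copy of $W_{j+1}$ or merely the closing segment of the enclosing copy. With that domination argument added, your proof is complete and coincides with the paper's; without it, only one value of $a$ has been treated.
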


In words, the first inequality means that $x$ enters $[C_k]$ for the first time no later than $z$ enters $[Q_kC_0]$ for the first time - which itself has happened by \textit{time} ``$6\left(8^{k-1}\abs{C_{k-1}}\right)$" by the second inequality. 

The final three inequalities emphasise that $z$ enters $[Q_kC_0]$ for the first time before $x$ enters the $\texttt{0}$-part of $C_k$ for the first time; in particular when $z$ enters $[Q_kC_0]$ for the first time $x$ still has to travel through at least $2\left(8^{k-1}\abs{C_{k-1}}\right)$ more $\texttt{1}$'s in the $\texttt{1}$-part of $C_k$ before it enters the $\texttt{0}$-part of $C_k$.

\begin{proof}

Let
\[n_0=\max \{c \in \mathbb{N} \mid \exists b < a : \sigma^b(y) \in [W_c]\}.\]

Note that $n_0$ is well defined and that $n_0 \geq n$. This means that $z$ is travelling through $W_{n_0}$ for the first time. 

Let $k> n$ be given. The first inequality follows immediately from the construction: The word $Q_kC_0$ appears in the sequence of $z$ for the first time only after the word $C_0\ldots C_{k-1}$. Similarly the word $C_k$ appears in the sequence of $x$ for the first time exactly after the word $C_0\ldots C_{k-1}$. Observing that $x=C_0C_1\ldots C_k C_{k+1}\ldots$ now gives the inequality, $\tau(x,[C_k]) \leq \tau(z, [Q_kC_0])$. 
It remains to show that the second inequality holds.

Let $z^\prime \in \Orb(y)$ be the point at which $y$ first enters $C_0\ldots C_n Q_{n+1}$; i.e. $z=\sigma^m(y)$ where $m= \tau \left(y, [C_0\ldots C_n Q_{n+1}] \right)$. Note that $z^\prime$ lies at the start of the closing segment of $W_n$. Indeed,
\[z^\prime = C_0 \ldots C_n Q_{n+1} W_{n+1} W_{n+2} W_{n+3} \ldots.\]
It is not difficult to see that $\tau \left(z, [Q_kC_0] \right) \leq \tau \left(z^\prime, [Q_kC_0] \right)$; it takes $z^\prime$ at least as long to enter $[Q_kC_0]$ for the first time as it does for $z$ to enter $[Q_kC_0]$ for the first time. (Observe that the letters (counting multiplicities) appearing in $z$ before the first appearance $Q_kC_0$ can be written as a list of words (including multiplicities) which also appear in $z^\prime$ (with multiplicities) before the first appearance of $Q_kC_0$ there. Hence the initial segment of $z^\prime$ up to the first appearance of $Q_kC_0$ is longer than that of the initial segment of $z$ up to the first appearance of $Q_kC_0$. 
We know $k > n$. First suppose that $k > n+1$. Then, by construction,
\begin{align*}
\tau \left(z^\prime, [Q_kC_0]\right) &= \abs{C_0\ldots C_n Q_{n+1}} + \left(\sum_{i=n+1} ^{k-2} \abs{W_i}\right) +\abs{W_0\ldots W_{k-2}C_0 \ldots C_{k-1}}
\\ &\leq \abs{W_0\ldots W_{k-2}C_0 \ldots C_{k-1}} + 2\abs{W_{k-2}}
\\ &\leq 6\left(8^{k-1}\abs{C_{k-1}}\right)\tag*{by Lemma \ref{lemmaClaim1}.}
\end{align*}
Since $\tau \left(z^\prime, [Q_kC_0] \right) \leq 6\left(8^{k-1}\abs{C_{k-1}}\right)$, and $\tau(z, [Q_kC_0]) \leq \tau(z^\prime, [Q_kC_0])$, we have that 
\[ \tau \left(z, [Q_kC_0] \right) \leq 6\left(8^{k-1}\abs{C_{k-1}}\right).\]

Now suppose that $k=n+1$. Then by Lemma \ref{lemmaClaim1}

\begin{align*}
\tau \left(z^\prime, [Q_kC_0] \right) &=\tau \left(x, [C_k]\right)
\\ &= \abs{C_0\ldots C_{k-1}}
\\ &\leq 6\left(8^{k-1}\abs{C_{k-1}}\right). 
\qedhere
\end{align*}

\end{proof}

\begin{corollary}\label{Corollary2ExampleEvP}
Let $n \in \mathbb{N}$ be such that $C_0\ldots C_n Q_{n+1}$ is an initial seqment of $z=\sigma^a(y)$ for some $a \in {\mathbb{N}_0}$. For any $k > n$, $x$ enters $[C_k]$ for the first time no later than $z$ enters $[Q_kC_0]$ for the first time. Additionally, $z$ enters $[Q_kC_0]$ for the first time before $x$ enters the $\textup{\texttt{0}}$-part of $C_k$ for the first time. In symbols:
\[\tau\left(x, [C_k]\right) \leq \tau\left(z, [Q_kC_0]\right) \leq \tau\left(x, \left[\textup{\texttt{0}}^{2^{k}\abs{C_0\ldots C_{k-1}}}\right]\right). \]
\end{corollary}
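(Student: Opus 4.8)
The plan is to deduce everything from Lemma~\ref{lemma2ExampleEvP} together with a single length comparison. Since $C_0\ldots C_n Q_{n+1}$ is, by hypothesis, an initial segment of $z=\sigma^a(y)$, Lemma~\ref{lemma2ExampleEvP} applies, and for every $k>n$ it supplies
\[\tau\left(x,[C_k]\right)\leq \tau\left(z,[Q_kC_0]\right)\qquad\text{and}\qquad \tau\left(z,[Q_kC_0]\right)\leq 6\left(8^{k-1}\abs{C_{k-1}}\right).\]
The first of these is already the first inequality claimed by the corollary, so it remains only to establish $\tau\left(z,[Q_kC_0]\right)\leq\tau\left(x,[\texttt{0}^{2^k\abs{C_0\ldots C_{k-1}}}]\right)$; and in view of the bound just quoted it is enough to show that $\tau\left(x,[\texttt{0}^{2^k\abs{C_0\ldots C_{k-1}}}]\right)\geq 6\left(8^{k-1}\abs{C_{k-1}}\right)$.

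First I would compute $\tau\left(x,[\texttt{0}^{2^k\abs{C_0\ldots C_{k-1}}}]\right)$ exactly. Writing $x=C_0C_1C_2\ldots$, the maximal runs of consecutive $\texttt{0}$'s occurring in $x$ are the single $\texttt{0}$ of $C_0$ and, for each $j\geq 1$, the $\texttt{0}$-part of $C_j$, of length $2^j\abs{C_0\ldots C_{j-1}}$; two consecutive blocks $C_j,C_{j+1}$ never merge into a longer run because $C_{j+1}$ begins with its $\texttt{1}$-part, which is nonempty. For $1\leq j<k$ we have $2^j\abs{C_0\ldots C_{j-1}}<2^k\abs{C_0\ldots C_{k-1}}$, so no $\texttt{0}$-run of $x$ occurring before the $\texttt{0}$-part of $C_k$ is long enough to place $x$ inside $[\texttt{0}^{2^k\abs{C_0\ldots C_{k-1}}}]$. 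Hence $x$ first enters this cylinder exactly when it reaches the start of the $\texttt{0}$-part of $C_k$, that is, immediately after the word $C_0\ldots C_{k-1}$ followed by the $\texttt{1}$-part of $C_k$; since the $\texttt{1}$-part of $C_k$ has length $8^k\abs{C_0\ldots C_{k-1}}$ this gives
\[\tau\left(x,\left[\texttt{0}^{2^k\abs{C_0\ldots C_{k-1}}}\right]\right)=\abs{C_0\ldots C_{k-1}}+8^k\abs{C_0\ldots C_{k-1}}.\]

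Finally I would chain the estimates. Using $\abs{C_{k-1}}\leq\abs{C_0\ldots C_{k-1}}$ and $6<8$, one has $6\left(8^{k-1}\abs{C_{k-1}}\right)\leq 6\cdot 8^{k-1}\abs{C_0\ldots C_{k-1}}<8^k\abs{C_0\ldots C_{k-1}}$, which is at most $\abs{C_0\ldots C_{k-1}}+8^k\abs{C_0\ldots C_{k-1}}=\tau\left(x,[\texttt{0}^{2^k\abs{C_0\ldots C_{k-1}}}]\right)$. Since $\tau\left(z,[Q_kC_0]\right)\leq 6\left(8^{k-1}\abs{C_{k-1}}\right)$ by Lemma~\ref{lemma2ExampleEvP}, we obtain $\tau\left(z,[Q_kC_0]\right)\leq\tau\left(x,[\texttt{0}^{2^k\abs{C_0\ldots C_{k-1}}}]\right)$, which is the second inequality; combined with $\tau\left(x,[C_k]\right)\leq\tau\left(z,[Q_kC_0]\right)$ it yields the displayed chain, and hence both ``for the first time'' assertions. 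I do not anticipate any genuine obstacle: the one mildly delicate step is the bookkeeping in the middle paragraph confirming that no $\texttt{0}$-run of $x$ before the $\texttt{0}$-part of $C_k$ is long enough, and this is immediate from the length relations already recorded (compare Remark~\ref{RemarkExLengthOf1Part}).
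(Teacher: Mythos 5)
Your proposal is correct and follows essentially the same route as the paper: apply Lemma \ref{lemma2ExampleEvP} for the first inequality and the bound $6\left(8^{k-1}\abs{C_{k-1}}\right)$, then observe that $x$ must traverse the $\texttt{1}$-part of $C_k$ (of length $8^{k}\abs{C_0\ldots C_{k-1}}>6\left(8^{k-1}\abs{C_{k-1}}\right)$) before entering $\left[\texttt{0}^{2^{k}\abs{C_0\ldots C_{k-1}}}\right]$. Your extra bookkeeping (the exact value of the hitting time and the check that no earlier $\texttt{0}$-run is long enough) only makes explicit what the paper leaves implicit.
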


\begin{proof}
By Lemma \ref{lemma2ExampleEvP} it will suffice to show $6\left(8^{k-1}\abs{C_{k-1}}\right) \leq \tau\left(x, \left[\texttt{0}^{2^{k}\abs{C_0\ldots C_{k-1}}}\right]\right)$. Notice that $x$ has to travel through the $\texttt{1}$-part of $C_k$ before reaching $\left[\texttt{0}^{2^{k}\abs{C_0\ldots C_{k-1}}}\right]$. The length of the $\texttt{1}$-part of $C_k$ is $8^{k}\abs{C_0\ldots C_{k-1}}>6\left(8^{k-1}\abs{C_{k-1}}\right)$.
\end{proof}

\begin{corollary}
The ordered pair $(x, \textup{\texttt{0}}^\infty)$ is a non-trivial even continuity pair.
\end{corollary}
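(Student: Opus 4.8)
Non-triviality is immediate, since $\texttt{0}^\infty\in\omega(x)$ (this is noted when $X$ is defined), so the whole task is to verify that $(x,\texttt{0}^\infty)\in\EvP(X,\sigma)$. The cylinders $[\texttt{0}^l]\cap X$ form a neighbourhood base at $\texttt{0}^\infty$, and if a choice of $U$ gives $\sigma^n(U)\subseteq[\texttt{0}^l]$ for the relevant $n$, then $\sigma^n(U)$ lies inside every neighbourhood of $\texttt{0}^\infty$ containing $[\texttt{0}^l]$; so it suffices to fix $l\ge 2$, set $O=V=[\texttt{0}^l]\cap X$, and take $U\coloneqq[C_0C_1]\cap X$. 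Concretely I must show: whenever $\sigma^n(x)\in[\texttt{0}^l]$, one has $\sigma^n(U)\subseteq[\texttt{0}^l]$.

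First I would reduce to a dense subset of $U$. Since $U$ and $O$ are clopen in $X$, we have $U=\overline{D\cap U}$ where $D\coloneqq\Orb(x)\cup\Orb(y)\cup\{\texttt{0}^m x,\texttt{0}^m y\mid m\in\mathbb{N}\}$; as each $\sigma^n$ is continuous and $O$ is closed, $\sigma^n(U)=\sigma^n(\overline{D\cap U})\subseteq\overline{\sigma^n(D\cap U)}\subseteq O$ as soon as $\sigma^n(w)\in O$ for every $w\in D\cap U$. A short occurrence analysis of the word $C_0C_1$ (which begins with $\texttt{1}$) shows that $C_0C_1$ occurs in $x$ only as its initial segment, and occurs in $y$ only at the start of a closing segment $C_0\ldots C_jQ_{j+1}$ of some $W_j$ with $j\ge 1$; the points $\texttt{0}^m x,\texttt{0}^m y$ and their iterates that land in $U$ reduce to iterates of $x$ or $y$. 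Hence $D\cap U=\{x\}\cup\{\,\sigma^a(y):\sigma^a(y)\text{ has initial segment }C_0\ldots C_jQ_{j+1}\text{ for some }j\ge 1\,\}$. For $w=x$ there is nothing to prove, as $\sigma^n(x)\in V=O$.

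Now fix $z=\sigma^a(y)$ with initial segment $C_0\ldots C_jQ_{j+1}$, $j\ge 1$, and an $n$ with $\sigma^n(x)\in[\texttt{0}^l]$. Write $t_i\coloneqq\abs{C_0\ldots C_{i-1}}$. Since the only runs of at least $l\ge 2$ consecutive $\texttt{0}$'s in $x=C_0C_1C_2\ldots$ lie inside the $\texttt{0}$-part of a single $C_k$ with $k\ge 1$, the block of positions $[n,n+l-1]$ lies inside the $\texttt{0}$-part of some such $C_k$, i.e.
\[t_k+8^kt_k\;\le\;n\;\le\;t_{k+1}-l,\qquad t_{k+1}=t_k+\abs{C_k}=(1+8^k+2^k)t_k.\]
If $k\le j$, then $z$ and $x$ agree on every position below $t_{j+1}=\abs{C_0\ldots C_j}$ (both read $C_0\ldots C_j$ there), and since $n+l-1\le t_{k+1}-1\le t_{j+1}-1$, the word $\sigma^n(z)$ begins with the same $\texttt{0}^l$ as $\sigma^n(x)$. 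If $k>j$, apply Corollary~\ref{Corollary2ExampleEvP} to $z$ and the index $k$:
\[\tau\bigl(x,[C_k]\bigr)\;\le\;\tau\bigl(z,[Q_kC_0]\bigr)\;\le\;\tau\Bigl(x,\bigl[\texttt{0}^{\,2^k\abs{C_0\ldots C_{k-1}}}\bigr]\Bigr).\]
By construction the outer terms are $\tau(x,[C_k])=t_k$ and $\tau(x,[\texttt{0}^{2^kt_k}])=t_k+8^kt_k$ (the start of the $\texttt{0}$-part of $C_k$ in $x$), so with $s\coloneqq\tau(z,[Q_kC_0])$ we get $t_k\le s\le t_k+8^kt_k\le n$, whence $0\le n-s\le(t_{k+1}-l)-t_k=\abs{C_k}-l=\abs{Q_k}-l$. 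Since $\sigma^s(z)$ begins with $Q_kC_0$, i.e.\ with $\texttt{0}^{\abs{Q_k}}$, deleting its first $n-s\le\abs{Q_k}-l$ symbols leaves at least $l$ leading $\texttt{0}$'s, so $\sigma^n(z)=\sigma^{\,n-s}(\sigma^s(z))\in[\texttt{0}^l]=O$. In either case $\sigma^n(z)\in O$, which finishes the verification.

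The real obstacle is the case $k>j$: one must locate, purely from the combinatorics of the construction, where an arbitrary point $z\in\Orb(y)$ close to $x$ sits at the exact instant $x$ runs into a long block of $\texttt{0}$'s, and guarantee that $z$ has by then entered the long $\texttt{0}$-block $Q_k$ and still has at least $l$ zeros ahead. This is precisely what the length inequality of Lemma~\ref{lemmaClaim1}, as repackaged in Corollary~\ref{Corollary2ExampleEvP}, was built to supply. Everything else — the clopen/continuity reduction to $D\cap U$, the occurrence analysis identifying $D\cap U$, and the arithmetic $n-s\le\abs{Q_k}-l$ — is routine bookkeeping given the estimates already in place.
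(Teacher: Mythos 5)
Your proof is correct and follows essentially the same route as the paper's: reduce via density and clopenness of cylinders to points of $\Orb(y)$ lying in $U$ (which necessarily begin with a closing segment $C_0\ldots C_jQ_{j+1}$), split according to whether the zero-block of $x$ currently being visited lies in the shared prefix $C_0\ldots C_j$ or in some later $C_k$, and in the latter case invoke Corollary \ref{Corollary2ExampleEvP} (i.e.\ Lemma \ref{lemmaClaim1}) to place $z$ inside $Q_k$ with enough zeros ahead. The only cosmetic differences are your fixed choice $U=[C_0C_1]$, where the paper shrinks $U=[C_0\ldots C_n]$ with $O=[\texttt{0}^n]$, and the explicit arithmetic $n-s\le\abs{Q_k}-l$ in place of the paper's ``at least as many zeros remaining'' comparison.
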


\begin{proof}
Since $\texttt{0}^\infty \in \omega(x)$, by definition $(x,\texttt{0}^\infty)$ is not a trivial even continuity pair. It thus suffices to show that $(x,\texttt{0}^\infty) \in \EvP(X,f)$, i.e.
\[\forall O\in \mathcal{N}_{\texttt{0}^\infty} \, \exists U \in \mathcal{N}_x \, \exists V \in \mathcal{N}_{\texttt{0}^\infty} : \forall n \in \mathbb{N}, \sigma^n(x) \in V \implies \sigma^n(U) \subseteq O.\]
Without loss of generality, let $O$ be the basic open neighbourhood $[\texttt{0}^n]$ of $\texttt{0}^\infty$. We claim $U=[C_0C_1\ldots C_n]$ and $V=[\texttt{0}^n]$ satisfy the even continuity condition. Since $\Orb(y)$ is dense and $O$, $U$ and $V$ are clopen, it suffices to consider only points in $U$ which are elements of the orbit of $y$. Let $z \in \Orb(y) \cap U$. Then $z \in [C_0 \ldots C_m Q_{m+1}]$ for some $m \geq n$. Suppose $l \in \mathbb{N}$ is such that $\sigma^l(x) \in V$.

\textbf{Case 1: $l \geq \tau\left(x, [C_{m+1}]\right)$.} Let $k \geq m+1$ be the greatest integer such that $l \geq \tau\left(x, [C_k]\right)$. It follows that at time $l$, $x$ is travelling through the $\texttt{0}$-part of $C_k$ for the first time, with at least $n$ $\texttt{0}$'s left to travel through. 
Furthermore, since $\tau\left(x, [C_k]\right) \leq \tau\left(z, [Q_kC_0]\right)$, and as $\abs{Q_k}=\abs{C_k}$, we have that $x$ finishes travelling though $C_k$ before $z$ finishes travelling through the $Q_k$-part of $Q_kC_0$. This means that at time $l$ there are at least as many $\texttt{0}$'s remaining in $Q_k$ (recall, $Q_k$ consists solely of $\texttt{0}$'s) for $z$ to travel through than there are $\texttt{0}$'s remaining in $C_k$ for $x$ to travel through. Since there are at least $n$ $\texttt{0}$'s left in $C_k$ for $x$ still to travel through (as $x \in V$), it follows that $z \in [\texttt{0}^n]=O$.

\textbf{Case 2: $l < \tau\left(x, [C_{m+1}]\right)$.} The initial segments of $x$ and $z$ are identical up to and including the first occurence of $C_{m+1}$. The word $C_{m+1}$ begins with a `$1$', therefore $l \leq \tau \left(x, [C_{m+1}]\right)-n$, because $\sigma^l(x) \in V=[\texttt{0}^n]$. In particular, it follows that $\sigma^l(z) \in [\texttt{0}^n]=O$.
\end{proof}

We will now set about showing that $\EqP(X,f) =\emptyset$. This will be completed in Lemma \ref{lemmaExEqPEmpty}. First we show that $y$ is not topologically equicontinuous with either one of the fixed points.

\begin{lemma}\label{lemmaEx01NotEqP}
Neither $(y,\textup{\texttt{0}}^\infty)$ nor $(y, \textup{\texttt{1}}^\infty)$ is an equicontinuity pair.
\end{lemma}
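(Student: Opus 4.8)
The plan is to prove both non-membership statements in the same way, by producing an explicit \emph{splitting neighbourhood} in the sense of Equation (\ref{eqn1.1}): for $\texttt{1}^\infty$ I will take $O\coloneqq[\texttt{1}]$ and for $\texttt{0}^\infty$ I will take $O\coloneqq[\texttt{0}]$. Since the cylinders $[W_0W_1\cdots W_m]$ ($m\in\mathbb{N}$) form a neighbourhood base at $y$, the cylinders $[\texttt{1}^l]$ (resp. $[\texttt{0}^l]$) form a neighbourhood base at $\texttt{1}^\infty$ (resp. $\texttt{0}^\infty$), and shrinking $U$ or $V$ only makes the splitting condition harder to fulfil, it suffices to show: for all $m,l\in\mathbb{N}$ there is $n\in\mathbb{N}$ with $\sigma^n(U)\cap V\neq\emptyset$ and $\sigma^n(U)\not\subseteq O$, where $U\coloneqq[W_0\cdots W_m]$ and $V\coloneqq[\texttt{1}^l]$ (resp. $[\texttt{0}^l]$).

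The single idea behind both cases is to use two points of $U$ that are ``out of phase by $\abs{W_0\cdots W_m}$''. Put $d\coloneqq\abs{W_0\cdots W_m}$. Because $W_{m+1}=W_0\cdots W_m C_0\cdots C_{m+1}Q_{m+2}$ has $W_0\cdots W_m$ as an initial segment, the point $z\coloneqq\sigma^{d}(y)=W_{m+1}W_{m+2}\cdots$ again begins with $W_0\cdots W_m$ and hence lies in $U$; moreover $\sigma^{n}(z)=\sigma^{n+d}(y)$ for every $n$. I will choose $n$ so that, when $y$ is read at positions $n$ and $n+d$, one of these positions lies deep inside the long block of \texttt{1}'s forming the $\texttt{1}$-part of some $C_k$ and the other lies in the adjacent long block of \texttt{0}'s; since the $\texttt{1}$-part of $C_k$ has length $8^{k}\abs{C_0\cdots C_{k-1}}$ and its $\texttt{0}$-part has length $2^{k}\abs{C_0\cdots C_{k-1}}$, both blocks exceed the \emph{fixed} quantities $d$ and $l$ as soon as $k$ is large.

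For $(y,\texttt{1}^\infty)$: fix a large $k$ and let $N_k$ be the position in $y$ at which the copy of $C_k$ inside $W_k=W_0\cdots W_{k-1}C_0\cdots C_{k-1}C_kQ_{k+1}$ begins. Then $\sigma^{N_k}(y)$ starts with $\texttt{1}^{8^{k}\abs{C_0\cdots C_{k-1}}}$, while the $d$ symbols of $y$ immediately preceding position $N_k$ all belong to the $\texttt{0}$-part of $C_{k-1}$ (which has length $2^{k-1}\abs{C_0\cdots C_{k-2}}\geq d$ once $k$ is large). Taking $n\coloneqq N_k-d$, we get $\sigma^{n}(z)=\sigma^{N_k}(y)\in[\texttt{1}^l]=V$, so $\sigma^n(U)\cap V\neq\emptyset$, whereas $\sigma^{n}(y)$ begins with \texttt{0}, so $\sigma^{n}(y)\in\sigma^{n}(U)\setminus O$ and $\sigma^{n}(U)\not\subseteq O$. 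For $(y,\texttt{0}^\infty)$ one argues symmetrically with the same $N_k$ and $z$: now set $n\coloneqq N_k+8^{k}\abs{C_0\cdots C_{k-1}}-d$, so that $\sigma^{n}(z)=\sigma^{n+d}(y)$ begins with the $\texttt{0}$-part of $C_k$, a run of at least $l$ zeros, giving $\sigma^n(U)\cap[\texttt{0}^l]\neq\emptyset$, while $\sigma^{n}(y)$ still lies inside the $\texttt{1}$-part of $C_k$ and so begins with \texttt{1}, giving $\sigma^{n}(U)\not\subseteq[\texttt{0}]=O$.

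This needs nothing beyond the definitions of $C_k$, $Q_k$, $W_k$; in particular Lemma \ref{lemmaClaim1} is not required here. The two facts that need (routine) checking are that $z=\sigma^{d}(y)$ indeed lies in $U$ — precisely the self-similarity that $W_0\cdots W_m$ is an initial segment of $W_{m+1}$ — and that the first appearance of $C_k$ in $y$ occurs inside $W_k$, immediately after $C_0\cdots C_{k-1}$; the latter holds because the longest run of \texttt{1}'s occurring in $W_0\cdots W_{k-1}$ is only $8^{k-1}\abs{C_0\cdots C_{k-2}}<8^{k}\abs{C_0\cdots C_{k-1}}$, so $C_k$ cannot occur there. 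I expect the only mildly delicate point to be the position bookkeeping — keeping track of exactly which block of $y$ the indices $n$ and $n+d$ fall into — but the mechanism itself is simply that $y$ and $\sigma^{d}(y)$ are two points of $U$ which, on blocks eventually much longer than $d$, can be made to disagree about whether they sit in a \texttt{1}-block or a \texttt{0}-block.
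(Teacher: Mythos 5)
Your argument is correct: $y\in U$ and $z=\sigma^{d}(y)\in U$ (self-similarity of $W_{m+1}$), and for $k$ large both the $\texttt{1}$-part and the $\texttt{0}$-part of the designated copy of $C_k$ inside $W_k$ exceed the fixed quantities $d$ and $l$, so at your chosen times one of the two points witnesses $\sigma^{n}(U)\cap V\neq\emptyset$ while the other witnesses $\sigma^{n}(U)\not\subseteq O$; the reduction to basic neighbourhoods is also handled correctly. The mechanism is the same as the paper's (two shifted copies of $y$ in $U$, out of phase across a long $\texttt{1}$-block/$\texttt{0}$-block boundary, with $[\texttt{0}]$ and $[\texttt{1}]$ as splitting neighbourhoods), but your parameterisation is genuinely different: you keep the phase shift fixed at $d=\abs{W_0\cdots W_m}$, determined by $U$ alone, and push $k\to\infty$, so no length estimates are needed beyond the trivial growth of the blocks — in particular you bypass Lemma \ref{lemmaClaim1} entirely, which is a real simplification. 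The paper instead takes $z=\sigma^{\tau(y,[W_l])}(y)$ with $l$ large (depending on $U$ and $V$) and uses Lemma \ref{lemmaClaim1} to control the relative positions of $y$ and $z$; what that buys is the extra observation, noted parenthetically, that at the critical time it is $y$ itself that lies in $V$, so the same computation shows the stronger fact $(y,\texttt{0}^\infty)\notin\EvP(X,\sigma)$. Your version does not yield this (your point landing in $V$ is $z$, not $y$), but the lemma as stated does not claim it and it is not used elsewhere, so your proof fully suffices.
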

\begin{proof}
Recall that, to show that $(y,p) \notin \EqP(X,\sigma)$, where $p \in X$, we need to show that:
\[\exists O\in \mathcal{N}_{p} : \forall U \in \mathcal{N}_y \, \forall V \in \mathcal{N}_{p} \, \exists n \in \mathbb{N} : \sigma^n(U) \cap V \neq \emptyset \text{ and } \sigma^n(U) \not\subseteq O.\]

Let $O=[\texttt{0}]$. We claim $O$ is a splitting neighbourhood of $\texttt{0}^\infty$ with regard to $y$. Let $U \in \mathcal{N}_y$ and $V \in \mathcal{N}_{0^\infty}$ be given and let $[W_0\ldots W_n] \subseteq U$ and $[\texttt{0}^n]\subseteq V$. Let $m\in \mathbb{N}$ be such that $\texttt{0}^n$ appears as a subword of $C_m$; notice that it follows that $\texttt{0}^n$ is a subword of both $C_k$ and $W_k$ for all $k \geq m$. Let $l = \max \{n+2, m+2\}$. Notice that $2(8^{l-1}\abs{C_{l-1}}) >n+1$. 
Let $t=\tau\left(y,[W_l]\right)$ and write $z=\sigma^t(y)$. It follows that $z \in U$. It is worth comparing $z$ and $y$ side by side.
\[z=W_0W_1\ldots W_{l-1} C_0C_1 \ldots C_l Q_{l+1} W_{l+1}W_{l+2}\ldots,\]
and
\[y=W_0W_1\ldots  W_{l-1}W_0W_1\ldots  W_{l-1} C_0C_1 \ldots C_l Q_{l+1} W_{l+1}W_{l+2}\ldots.\]
Thus $z$ and $y$ share the same initial segment of $W_0\ldots W_{l-1}$. After this $z$ enters $[C_0C_1 \ldots C_l Q_{l+1}]$ for the first time whilst $y$ enters $[W_l]$ for the first time.
By Lemma \ref{lemmaClaim1},
\begin{equation}
6\left(8^{l-1}\abs{C_{l-1}}\right) \geq \abs{W_0W_1\ldots W_{l-2} C_0 \ldots C_{l-1}}.
\end{equation}
In particular the length of the $\texttt{1}$-part of $C_l$ is greater than $\abs{W_0W_1\ldots W_{l-2} C_0 \ldots C_{l-1}}+2\left(8^{l-1}\abs{C_{l-1}}\right)$. It follows that \[\tau\left(y, [Q_lC_0C_1\ldots C_l]\right) \leq \tau\left(z, \left[\texttt{0}^{2^{l}\abs{C_0\ldots C_{l-1}}}Q_{l+1}\right]\right) - 2\left(8^{l-1}\abs{C_{l-1}}\right).\]
That is, $y$ enters $[Q_lC_0C_1\ldots C_l]$ for the first time before $z$ enters the $\texttt{0}$-part of $[C_l]$ for the first time; in particular when $y$ enters $[Q_lC_0C_1\ldots C_l]$ for the first time $z$ still has to travel through at least $2\left(8^{l-1}\abs{C_{l-1}}\right)$ more $\texttt{1}$'s in the $\texttt{1}$-part of $C_l$ before it enters the $\texttt{0}$-part of $C_l$. Since $\tau\left(z, [C_l]\right) \leq \tau\left(y, [Q_lC_0C_1\ldots C_l]\right)$ we get that 
\[\sigma^{\tau\left(y, [Q_lC_0C_1\ldots C_l]\right)}(y) \in V\]
but
\[\sigma^{\tau\left(y, [Q_lC_0C_1\ldots C_l]\right)}(z) \in \left[\texttt{1}^{2\left(8^{l-1}\abs{C_{l-1}}\right)}\right] \implies \sigma^{\tau\left(y, [Q_lC_0C_1\ldots C_l]\right)}(z) \notin O.\]
Hence $(y,0^\infty) \notin \EqP(X,\sigma)$. (Indeed, we have actually shown the stronger claim that $(y,\texttt{0}^\infty) \notin \EvP(X,\sigma)$.)

Now let $O=[\texttt{1}]$. We claim $O$ is a splitting neighbourhood of $\texttt{1}^\infty$ with regard to $y$. Let $U \in \mathcal{N}_y$ and $V \in \mathcal{N}_{\texttt{1}^\infty}$. Let $[W_0\ldots W_n] \subseteq U$ and $[\texttt{1}^n]\subseteq V$. Let $m\in \mathbb{N}$ be such that $\texttt{1}^n$ appears as a subword of $C_m$; notice that it follows that $\texttt{1}^n$ is a subword of both $C_k$ and $W_k$ for all $k \geq m$. Let $l = \max \{n+2, m+2\}$. Notice that $2(8^{l-1}\abs{C_{l-1}}) >n+1$. 
Let $t=\tau\left(y,[W_l]\right)$ and write $z=\sigma^t(y)$. It follows that $z \in U$. As before, $z$ and $y$ share the same initial segment of $W_0\ldots W_{l-1}$. After this $z$ enters $[C_0C_1 \ldots C_l Q_{l+1}]$ for the first time whilst $y$ enters $[W_l]$ for the first time.
By an almost identical argument to the one we used in the previous paragraph (whilst showing that $p\neq \texttt{0}^\infty$), we know that
\[\sigma^{\tau\left(y, [Q_lC_0C_1\ldots C_l]\right)}(z) \in \left[\texttt{1}^{2\left(8^{l-1}\abs{C_{l-1}}\right)}\right] \subseteq V.\]
However
\[\sigma^{\tau\left(y, [Q_lC_0C_1\ldots C_l]\right)}(y) \in [\texttt{0}] \subseteq X\setminus O.\]
Hence $(y,\texttt{1}^\infty) \notin \EqP(X,\sigma)$.
\end{proof}

\begin{lemma}\label{lemmaExEqPEmpty}
The system $(X,\sigma)$ has no equicontinuity pairs.
\end{lemma}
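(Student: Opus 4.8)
The plan is to reduce the statement to a claim about the distinguished transitive point $y$ and then settle the remaining case by a combinatorial argument modelled on Lemma~\ref{lemmaEx01NotEqP}. Since $y$ is a transitive point, Remark~\ref{RemarkEqPairImpliesTransPtHasEqPartner} shows that if $\EqP(X,\sigma)\neq\emptyset$ then there is $b\in X$ with $(y,b)\in\EqP(X,\sigma)$; as the system is transitive, such a pair is non-trivial (Theorem~\ref{ChT}) and $b\in\omega(y)=X$ (Corollary~\ref{Cor1.Eq}). So it is enough to prove $(y,b)\notin\EqP(X,\sigma)$ for every $b\in X$, and Lemma~\ref{lemmaEx01NotEqP} already disposes of $b\in\{\texttt{0}^\infty,\texttt{1}^\infty\}$.

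The next ingredient is a structural fact: every $b\in X$ satisfies $\texttt{0}^\infty\in\omega(b)$ or $\texttt{1}^\infty\in\omega(b)$. This follows by analysing the maximal runs that occur in $x$ and in $y$. Their lengths are tightly constrained — a maximal run of $\texttt{1}$'s has length in $\{1\}\cup\{8^n\abs{C_0\ldots C_{n-1}}:n\geq1\}$ — and in both $x$ and $y$ a maximal run of either symbol is always followed, within a bounded number of positions, by a run that is strictly longer; consequently no point of $X$ can have all its runs bounded, and since a point of $X$ whose long runs occurred only at finitely many positions would have a shift lying in $X$ with all runs bounded, every $b\in X$ has runs of a single symbol of unbounded length occurring at cofinal positions, which is exactly the asserted fact. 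By the $\texttt{0}\leftrightarrow\texttt{1}$ symmetry of what follows we may assume $\texttt{0}^\infty\in\omega(b)$.

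Now fix $b\in X\setminus\{\texttt{0}^\infty,\texttt{1}^\infty\}$ with $\texttt{0}^\infty\in\omega(b)$, let $u$ be its shortest prefix ending in the symbol $\texttt{1}$, and put $O=[u]\in\mathcal{N}_b$. I would prove that $O$ is a splitting neighbourhood of $b$ with regard to $y$, i.e.\ that for all $U\in\mathcal{N}_y$ and $V\in\mathcal{N}_b$ there is $n$ with $\sigma^n(U)\cap V\neq\emptyset$ and $\sigma^n(U)\not\subseteq O$. As in Lemma~\ref{lemmaEx01NotEqP}, density of $\Orb(y)$ and clopenness of the cylinders reduce this to working with orbit points: one picks a ``lagging'' point $z=\sigma^t(y)\in U$ sitting at the start of a block $W_l$, together with an occurrence in $y$ of the appropriate prefix of $b$ far enough ahead that, at the corresponding time $n$, we have $\sigma^n(y)\in V$ while $\sigma^n(z)$ is still buried in the long $\texttt{1}$-part of some $C_k$, so that $\sigma^n(z)\notin[u]$. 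The length estimates of Lemma~\ref{lemmaClaim1}, Lemma~\ref{lemma2ExampleEvP} and Corollary~\ref{Corollary2ExampleEvP} are precisely what guarantee that such an $n$ exists. Hence $(y,b)\notin\EqP(X,\sigma)$, and the proof is complete.

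The main obstacle is this last step. Besides verifying the run-structure fact (a routine but careful induction on the words $C_n,Q_n,W_n$), one must make the lag-and-escape construction of Lemma~\ref{lemmaEx01NotEqP} go through uniformly for every admissible prefix $u$; this forces a split into the case where $b$ is eventually constant — so that $u$ encodes essentially all of $b$ — and the case where $b$ has runs of both symbols of unbounded length, with the choice of $O$ (possibly a finer prefix of $b$ than $[u]$) and of the lagging point adjusted accordingly.
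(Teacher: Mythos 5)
Your overall architecture is the paper's: reduce to showing $(y,b)\notin\EqP(X,\sigma)$ for every $b$ via Remark~\ref{RemarkEqPairImpliesTransPtHasEqPartner}, dispose of the fixed points by Lemma~\ref{lemmaEx01NotEqP}, and then exhibit a splitting cylinder around $b$ by a lag-and-escape argument with $z=\sigma^{\tau(y,[W_l])}(y)$. But the specific splitting neighbourhood you choose does not work. You set $O=[u]$ with $u$ the \emph{shortest} prefix of $b$ ending in $\texttt{1}$; if $b$ begins with $\texttt{1}$ then $u=\texttt{1}$ and $O=[\texttt{1}]$, and your escape mechanism collapses: the point $\sigma^n(z)$ that is ``buried in the long $\texttt{1}$-part of some $C_k$'' begins with a long run of $\texttt{1}$'s and therefore \emph{does} lie in $[u]$, so $\sigma^n(U)\subseteq O$ is not contradicted. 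An escape into a $\texttt{1}$-run forces the word defining $O$ to contain a $\texttt{0}$ within the length of that run; this is exactly what the paper arranges, taking $O=[p_0\ldots p_k]$ with $k$ large enough that some $p_i=\texttt{0}$ (possible precisely because $b\neq\texttt{0}^\infty,\texttt{1}^\infty$, which Lemma~\ref{lemmaEx01NotEqP} has already excluded). With that choice the quantitative step — which you only gesture at — is the real content: one needs the chain of first-hitting-time inequalities built from Lemma~\ref{lemmaClaim1} showing that the whole of $y$'s second pass through $W_0\ldots W_{l-2}C_0\ldots C_{l-1}$ (where $p_0\ldots p_n$ occurs as a subword of $W_{l-2}$, giving a time $c$ with $\sigma^c(y)\in V$) is completed while $z$ still has at least $2\bigl(8^{l-1}\abs{C_{l-1}}\bigr)>n+1$ symbols of the $\texttt{1}$-part of $C_l$ ahead of it, whence $\sigma^c(z)\in[\texttt{1}^{n+1}]$ misses $O$. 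Your closing paragraph concedes this case split and the choice of $O$ are unresolved, and that is precisely where the proof lives.

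Two further remarks. The structural claim that every $b\in X$ has $\texttt{0}^\infty\in\omega(b)$ or $\texttt{1}^\infty\in\omega(b)$ is not needed anywhere: the paper's argument uses no information about $\omega(b)$, only that the defining word of $O$ contains a $\texttt{0}$ and that every finite prefix of $b$ occurs as a subword of some $W_m$ (automatic since $b\in X=\omega(y)$). And the appeal to a ``$\texttt{0}\leftrightarrow\texttt{1}$ symmetry'' is unsafe: the construction is genuinely asymmetric (the $Q_n$ and the tails of the $W_n$ consist solely of $\texttt{0}$'s, and the long run available to the lagging point $z$ is a $\texttt{1}$-run); compare how Lemma~\ref{lemmaEx01NotEqP} must treat $\texttt{0}^\infty$ and $\texttt{1}^\infty$ by different, role-reversed arguments. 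Dropping both of these and replacing your $u$ by a prefix of $b$ long enough to contain a $\texttt{0}$ would bring your sketch in line with the paper's proof.
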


\begin{proof}

By Remark \ref{RemarkEqPairImpliesTransPtHasEqPartner} it will suffice to show that $(y,p) \notin \EqP(X,\sigma)$ for any $p \in X$. We need to show that, for any $p \in X$,
\[\exists O\in \mathcal{N}_{p} : \forall U \in \mathcal{N}_y \, \forall V \in \mathcal{N}_{p} \, \exists n \in \mathbb{N} : \sigma^n(U) \cap V \neq \emptyset \text{ and } \sigma^n(U) \not\subseteq O.\]

Suppose that $(y,p) \in \EqP(X,\sigma)$; write $p=p_0p_1p_2\ldots$. 
By Lemma \ref{lemmaEx01NotEqP} we have that $p \notin \{\texttt{0}^\infty, \texttt{1}^\infty\}$. This means that there exist $i,j \in {\mathbb{N}_0}$ such that $p_i=\texttt{0}$ and $p_j=\texttt{1}$. Fix such an $i$ and a $j$ and take $k \geq \max \{i,j\}$. Let $O=[p_0p_1\ldots p_k]$. We claim $O$ is a splitting neighbourhood of $p$ with regard to $y$. Let $U \in \mathcal{N}_y$ and $V \in \mathcal{N}_p$ be given and let $[W_0\ldots W_n] \subseteq U$ and $[p_0p_1\ldots p_n]\subseteq V$; without loss of generality $n\geq k$. Let $m\in \mathbb{N}$ be such that $p_0p_1\ldots p_n$ appears as a subword of $W_m$; notice that it follows that $p_0p_1\ldots p_n$ is a subword of $W_a$ for all $a \geq m$. Let $l \geq \max \{n+2, m+2\}$ be such that $2(8^{l-1}\abs{C_{l-1}}) >n+1$. 
Let $t=\tau\left(y,[W_l]\right)$ and write $z=\sigma^t(y)$. It follows that $z \in U$. It is worth comparing $z$ and $y$ side by side.
\[z=W_0W_1\ldots W_{l-1} C_0C_1 \ldots C_l Q_{l+1} W_{l+1}W_{l+2}\ldots,\]
and
\[y=W_0W_1\ldots  W_{l-1}W_0W_1\ldots  W_{l-1} C_0C_1 \ldots C_l Q_{l+1} W_{l+1}W_{l+2}\ldots.\]
Notice $z$ and $y$ share the same initial segment given by $W_0\ldots W_{l-1}$. After this $z$ enters $[C_0C_1 \ldots C_l Q_{l+1}]$ for the first time whilst $y$ enters $[W_l]$ for the first time.
Notice that, for all $i \in {\mathbb{N}_0}$, $\abs{W_i} \geq \abs{Q_{i+1}}=\abs{C_{i+1}}$. In addition $\abs{W_0} \geq \abs{C_0C_1}$. It follows that 
\begin{equation}\label{EqnExyz1}
\tau\left(z,[C_l]\right) \leq \tau\left(y,[W_{l-1}C_0\ldots C_lQ_{l+1}]\right).
\end{equation}
Observe,
\[\tau\left(y, [Q_l C_0\ldots C_lQ_{l+1}]\right)=\abs{W_0\ldots W_{l-1}}+\abs{W_0\ldots W_{l-2}} +\abs{W_0\ldots W_{l-2}C_0\ldots C_{l-1}}.\]
Similarly observe
\[\tau\left(z, [C_l]\right)=\abs{W_0\ldots W_{l-1}} +\abs{C_0\ldots C_{l-1}}. \]
Therefore,
\begin{align*}
\tau\left(y, [Q_l C_0\ldots C_lQ_{l+1}]\right)-\tau\left(z, [C_l]\right) &= 2\abs{W_0\ldots W_{l-2}},
\\ &\leq \abs{W_0W_1\ldots W_{l-2}} + 2\abs{W_{l-2}},
\\ &\leq 6\left(8^{l-1}\abs{C_{l-1}}\right)\tag*{by Lemma \ref{lemmaClaim1}.}
\end{align*}
Thus
\begin{equation}\label{EqnExzy2}
\tau\left(z,[C_l]\right) +6\left(8^{l-1}\abs{C_{l-1}}\right) \geq \tau\left(y, [Q_l C_0\ldots C_lQ_{l+1}]\right).
\end{equation}
Putting inequalities (\ref{EqnExyz1}) and (\ref{EqnExzy2}) together we obtain:
\begin{align*}
\tau\left(z,[C_l]\right) &\leq \tau\left(y,[W_{l-1}C_0\ldots C_lQ_{l+1}]\right)
\\ &\leq  \tau\left(y, [Q_l C_0\ldots C_lQ_{l+1}]\right)
\\ &\leq \tau\left(z,[C_l]\right) +6\left(8^{l-1}\abs{C_{l-1}}\right)
\\ &\leq  \tau\left(z, \left[\texttt{0}^{2^{l}\abs{C_0\ldots C_{l-1}}}Q_{l+1}\right]\right) - 2\left(8^{l-1}\abs{C_{l-1}}\right).
\end{align*}
The final inequality follows because, by definition, the length of the $\texttt{1}$-part of $C_l$ is more than $8^{l}\abs{C_{l-1}}$.
It follows that, whilst $y$ enters $W_0W_1\ldots W_{l-2} C_0 \ldots C_{l-1}$ for the second time, $z$ is travelling through the $\texttt{1}$-part of $C_l$. When $y$ finishes travelling though $W_0W_1\ldots W_{l-2} C_0 \ldots C_{l-1}$ for the second time (and enters $[Q_l C_0\ldots C_lQ_{l+1}]$ for the first time), $z$ still has to travel through at least $2\left(8^{l-1}\abs{C_{l-1}}\right)$ more $\texttt{1}$'s in the $\texttt{1}$-part of $C_l$ before it enters the $\texttt{0}$-part of $C_l$. Because $p_0\ldots p_n$ is a subword of $W_{l-2}$, which is a subword of $W_0W_1\ldots W_{l-2} C_0 \ldots C_{l-1}$, and since $[p_0p_1\ldots p_n]\subseteq V$ it follows that $y$ enters $V$ whilst travelling through $W_0W_1\ldots W_{l-2} C_0 \ldots C_{l-1}$ for the second time. Take $c \in {\mathbb{N}_0}$ such that $\sigma^c(y) \in V$ where $c>\tau\left(y, [W_l]\right)$ and $c < \tau\left(y, [Q_l C_0\ldots C_lQ_{l+1}]\right)$. Since $2\left(8^{l-1}\abs{C_{l-1}}\right)>n+1$ it follows that $\sigma^c(z) \in [1^{n+1}]$. But the word inducing $O$ (i.e. $p_0\ldots p_k$) contains at least one $\texttt{0}$ and $n+1\geq k+1$. Hence $\sigma^c(z) \notin O$; in particular $\sigma^c(U) \cap V \neq \emptyset$ and $\sigma^c(U) \not\subseteq O$.
\end{proof}

\begin{lemma}
The system $(X,\sigma)$ is Auslander-Yorke chaotic but not Devaney chaotic.
\end{lemma}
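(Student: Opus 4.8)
The plan is to separate the two assertions. For \emph{Auslander--Yorke chaotic} everything needed has already been set up: $y$ is a transitive point, so $(X,\sigma)$ is transitive, and, as was already observed, $X$ is a shift space with no isolated points — it is infinite (it contains the dense orbit of $y$, which is not eventually periodic) and every subword of $y$ recurs in $y$ by the nesting $W_{n+1}=W_0\cdots W_nC_0\cdots C_{n+1}Q_{n+2}$, so no point of $X=\overline{\Orb_\sigma(x)\cup\Orb_\sigma(y)\cup\{\texttt{0}^nx,\texttt{0}^ny\}}$ is isolated — hence $(X,\sigma)$ is sensitive. A transitive sensitive system is Auslander--Yorke chaotic by definition, so it only remains to show $(X,\sigma)$ is not Devaney chaotic, i.e.\ (since it is already Auslander--Yorke chaotic) that its set of periodic points is not dense.

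The key claim is that the only periodic points of $(X,\sigma)$ are the fixed points $\texttt{0}^\infty$ and $\texttt{1}^\infty$. Granting this, the set $[\texttt{10}]\cap X$ is a nonempty open set — it contains $\texttt{10}^\infty\in\omega(x)\subseteq X$ — which contains neither fixed point (these begin $\texttt{00}\cdots$ and $\texttt{11}\cdots$), so the periodic points are not dense and $(X,\sigma)$ is not Devaney chaotic. To prove the claim, suppose $z\in X$ is periodic of minimal period $p$, so $z=\overline u$ for a word $u$ with $\abs u=p$ that is not a proper power. If $u$ is constant then $z\in\{\texttt{0}^\infty,\texttt{1}^\infty\}$ and we are done, so assume $u$ contains both symbols. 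Since $X$ is the closure of $\Orb_\sigma(x)\cup\Orb_\sigma(y)\cup\{\texttt{0}^nx,\texttt{0}^ny\}$, for every $N$ the word $u^N$ occurs either as a subword of $x$, as a subword of $y$, or as $\texttt{0}^j$ followed by an initial segment of $x$ or of $y$. For $N$ large, $u^N$ contains many complete maximal blocks of $\texttt{1}$'s whose lengths repeat periodically; in particular, if $\ell^\ast\geq 2$ is the largest length of a maximal $\texttt{1}$-block of $u$, then $u^N$ contains two distinct complete maximal $\texttt{1}$-blocks of length $\ell^\ast$, and in any case it contains two complete maximal $\texttt{1}$-blocks of equal length.

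Now one reads off from the recursive definitions the combinatorial facts that drive the contradiction: the maximal $\texttt{1}$-blocks occurring in $x$ — hence also in any $\texttt{0}^j$ followed by an initial segment of $x$ — have the pairwise distinct lengths $1,16,8^2\abs{C_0C_1},8^3\abs{C_0C_1C_2},\dots$, so a non-constant $u^N$ cannot occur there; and the maximal $\texttt{1}$-blocks of length $\geq 2$ occurring in $y$ are exactly the $\texttt{1}$-parts of the $C_i$ $(i\geq 1)$, again of pairwise distinct lengths $16,8^2\abs{C_0C_1},\dots$, so the case $\ell^\ast\geq2$ is impossible too. There remains only the case $\ell^\ast=1$ with $u^N$ occurring in $y$ (or in a $\texttt{0}$-shifted initial segment of $y$): then every maximal $\texttt{1}$-block of $z$ is isolated, and one must check that $y$ contains no arbitrarily long window in which isolated $\texttt{1}$'s recur with bounded gaps. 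Here one uses that an isolated $\texttt{1}$ of $y$ is always either immediately followed by a $\texttt{0}$-block of length $1$ and then a $\texttt{1}$-block of length $16$ (the copies of $C_0$ preceding $C_1$), or followed by a $\texttt{0}$-block of length exactly $21$ (the $\texttt{1}$ of $W_0=C_0Q_1$), and that the nesting $W_n=W_0\cdots W_{n-1}C_0\cdots C_nQ_{n+1}$ prevents the latter pattern from chaining more than twice before it is broken. This last verification — tracking how isolated $\texttt{1}$'s can chain together in $y$ — is the main obstacle; the remaining assertions are immediate from results already in the paper.
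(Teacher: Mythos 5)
The first half of your argument is fine and is exactly the paper's: transitivity together with sensitivity (already established for this system, which is a shift space without isolated points) gives Auslander--Yorke chaos, and non-Devaney-chaoticity is reduced to the claim that the only periodic points are $\texttt{0}^\infty$ and $\texttt{1}^\infty$ — a claim the paper itself only asserts as verifiable. The trouble lies in your attempted verification of that claim.

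Concretely: (i) your disposal of the case $\ell^\ast\geq 2$ inside $y$ rests on the statement that the maximal $\texttt{1}$-blocks of length at least $2$ in $y$ have pairwise distinct lengths, and this is false. The word $C_1$ occurs in every copy of $W_1$ and at the start of every closing segment $C_0C_1\cdots C_nQ_{n+1}$ with $n\geq 1$, and each such occurrence is flanked by $\texttt{0}$'s, so $\texttt{1}^{16}$ occurs as a maximal $\texttt{1}$-block of $y$ infinitely often (likewise the $\texttt{1}$-part of each fixed $C_i$, $i\geq 1$). Hence producing two equal-length complete maximal $\texttt{1}$-blocks in $u^N$ yields no contradiction for subwords of $y$, and the case $\ell^\ast\geq 2$ is genuinely open in your write-up; one would instead have to analyse what must separate two equal-length blocks in $y$ (a $\texttt{0}$-run of length at least $\abs{Q_{i+1}}$ or a much longer $\texttt{1}$-run), and since the period $p$ is arbitrary this must be pushed up the hierarchy. (ii) You explicitly leave the case $\ell^\ast=1$ — that isolated $\texttt{1}$'s cannot chain periodically in $y$ — as ``the main obstacle''; an acknowledged unverified step is a gap, not a proof. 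A uniform way to close both holes: a non-constant periodic point of period $p$ has every run of equal symbols of length less than $p$, whereas for each $p$ there is an $L$ such that every subword of length $L$ of $x$, of $y$, or of $\texttt{0}^j$ followed by an initial segment of either, contains at least $p$ consecutive equal symbols — choose $n_0$ so that the $\texttt{1}$- and $\texttt{0}$-parts of $C_n$ and $Q_n$ for $n\geq n_0$ all exceed $p$, and check by bookkeeping in the spirit of Lemma \ref{lemmaClaim1} that stretches of $y$ built only from blocks of index below $n_0$ have bounded length. With that single estimate the claim that $\texttt{0}^\infty$ and $\texttt{1}^\infty$ are the only periodic points follows at once, and your exhibited open set $[\texttt{10}]\cap X$ then finishes the argument as you say.
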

\begin{proof}
The system is both transitive and sensitive, this means it is Auslander-Yorke chaotic. It may be verified that the only periodic points are $\texttt{0}^\infty$ and $\texttt{1}^\infty$, hence the system is not Devaney chaotic.
\end{proof}

\begin{lemma}
The system $(X,\sigma)$ is both strongly and densely Li-Yorke chaotic.
\end{lemma}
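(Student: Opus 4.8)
The plan is to produce a single set $S\subseteq X$ that is dense, uncountable, and \emph{strongly} scrambled; since a strongly scrambled set is in particular scrambled, such an $S$ simultaneously witnesses strong Li--Yorke chaos and dense Li--Yorke chaos. I do not intend to write $S$ down by hand but to obtain it from the classical Kuratowski--Mycielski theorem: if $Z$ is a perfect compact metric space and $R\subseteq Z\times Z$ is residual (a dense $G_\delta$), then there is a dense subset $S\subseteq Z$ which is a countable union of Cantor sets and satisfies $(s,s')\in R$ for all distinct $s,s'\in S$. Since $X$ is a subshift with no isolated points it is perfect, and it is compact metrizable, so the theorem applies with $Z=X$; the work is to choose an $R$ whose off-diagonal members are strong Li--Yorke pairs and to prove that $R$ is residual.

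Fix $c>0$ so that $d(s,t)\ge c$ whenever $s$ and $t$ differ in the $0$-th coordinate, and set
\[ R \;=\; \mathrm{Prox}\cap\mathrm{Rec}(X\times X)\cap\{(a,b)\in X\times X:\ \limsup_{n\to\infty} d(\sigma^n a,\sigma^n b)\ge c\}, \]
where $\mathrm{Prox}=\{(a,b):\liminf_n d(\sigma^n a,\sigma^n b)=0\}$ and $\mathrm{Rec}(X\times X)$ is the set of recurrent points of $(X\times X,\sigma\times\sigma)$. A routine quantifier count shows each of these three sets is a $G_\delta$ in $X\times X$, hence so is $R$; and if $(a,b)\in R$ with $a\ne b$ then $(a,b)$ is proximal, recurrent in the product, and (because $\limsup d(\sigma^n a,\sigma^n b)\ge c>0$) not asymptotic, i.e.\ a strong Li--Yorke pair. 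So it remains to prove $R$ is dense, for which I will show $R$ contains $T=\{(\sigma^m y,\sigma^{m'}y):m\ne m'\}$; this $T$ is dense in $X\times X$ because $\omega_\sigma(y)=X$, which lets each coordinate be steered into any prescribed nonempty cylinder and at infinitely many times, so one can always arrange $m\ne m'$.

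Fix $m<m'$ and write $d_0=m'-m\ge 1$. \emph{Proximality of $(\sigma^m y,\sigma^{m'}y)$:} from the recursive definitions, $\abs{Q_k}=\abs{C_k}\to\infty$ and each $Q_k$ (a block of $\texttt{0}$'s) occurs in $y$, so for every $L$ there are arbitrarily late positions $j$ at which $y$ reads $\texttt{0}^{L+d_0}$; there the length-$L$ windows of $y$ at $j$ and at $j+d_0$ agree, so $d(\sigma^{j}y,\sigma^{j+d_0}y)<2^{-L}$, and letting $n=j-m\to\infty$ gives $\liminf_n d(\sigma^{n+m}y,\sigma^{n+m'}y)=0$. \emph{Non-asymptoticity:} $y$ is a transitive point of an infinite system, hence not eventually periodic, so $y_j\ne y_{j+d_0}$ for infinitely many $j$, i.e.\ $d(\sigma^{n+m}y,\sigma^{n+m'}y)\ge c$ infinitely often. \emph{Recurrence in $X\times X$:} since $\sigma^m(y)\in\omega_\sigma(y)$, the finite word $y_m y_{m+1}\cdots y_{m'+L-1}$ reoccurs in $y$ at arbitrarily late positions $q$; taking $n=q-m$ makes $\sigma^{n+m}(y)$ and $\sigma^{n+m'}(y)$ agree with $\sigma^m(y)$ and $\sigma^{m'}(y)$ respectively on the first $L$ coordinates, with $n\to\infty$. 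Hence $T\subseteq R$, so $R$ is a dense $G_\delta$, and Kuratowski--Mycielski delivers the desired dense uncountable strongly scrambled set $S$.

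I expect the only part needing care to be the density of $R$, and within it the proximality claim, which is the single place where the specific construction is used; but it reduces entirely to the elementary fact that $y$ passes through arbitrarily long pure-$\texttt{0}$ blocks (the $Q_k$), so the ``obstacle'' is really just reading off $\abs{Q_k}\to\infty$ and the occurrence of $Q_k$ in $y$ from the definitions. Everything else --- recurrence in the product, non-asymptoticity, density of $T$ --- is soft and follows from $y$ being a transitive, hence recurrent, point of an infinite perfect subshift in which every factor recurs infinitely often.
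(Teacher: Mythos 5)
Your proof is correct, but it takes a genuinely different route from the paper. The paper disposes of this lemma in one line by citing a general result (Corollary 7.3.7 of de Vries): a transitive compact metric system without isolated points that has a fixed point is automatically both strongly and densely Li--Yorke chaotic, and $(X,\sigma)$ qualifies because it is a perfect subshift, $y$ is a transitive point, and $\texttt{0}^\infty$ is fixed. You instead reprove this fact for the specific system: you set up the relation $R$ of proximal, product-recurrent, uniformly non-asymptotic pairs, check it is a $G_\delta$, show it contains the dense set $T=\{(\sigma^m y,\sigma^{m'}y):m\neq m'\}$ (proximality coming from the arbitrarily long $\texttt{0}$-blocks $Q_k$ in $y$, non-asymptoticity from $y$ not being eventually periodic, product recurrence from $\sigma^m y\in\omega(y)=X$), and then invoke Kuratowski--Mycielski to extract a single dense, uncountable, strongly scrambled set. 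This is sound; note that your proximality step via the $Q_k$'s is exactly the role the fixed point $\texttt{0}^\infty\in\omega(y)$ plays in the general theorem, so your argument is in effect an unpacking of the proof behind the citation. What each approach buys: the paper's citation is maximally short but leans on an external dynamical theorem; yours is self-contained apart from the classical Mycielski machinery, makes explicit which combinatorial features of the construction produce the chaos, and would generalise verbatim to any transitive perfect subshift containing a fixed point. Two cosmetic points only: the bound $d(\sigma^j y,\sigma^{j+d_0}y)<2^{-L}$ presumes the standard shift metric (for an arbitrary compatible metric say instead that agreement on the first $L$ coordinates forces distance at most $\epsilon_L\to 0$, by compactness), and it is worth one sentence that pairs in $T$ are genuinely off-diagonal because $y$, being a transitive point of an infinite system, is not eventually periodic.
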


\begin{proof}
By Corollary 7.3.7 in \cite{deVries}, a compact metric system without isolated points is both strongly and densely Li-Yorke chaotic if the system is transitive and there is a fixed point. Since our system satisfies these conditions the result follows.
\end{proof}

\bibliographystyle{plain} 
\bibliography{bib}

\end{document}